\newtheorem{theorem}{Theorem}[section]
\newtheorem{proposition}[theorem]{Proposition}
\newtheorem{corollary}[theorem]{Corollary}
\theoremstyle{definition}
\newtheorem{remark}[theorem]{Remark}
\numberwithin{equation}{section}
\title[Rational orbits in some PV's associated to $Sp_{6}$ revisited]{Rational orbits in some prehomogeneous vector spaces associated to $Sp_{6}$ revisited}
\author{Sayan Pal}
\address{Indian Statistical Institute, Statistics and Mathematics Unit, 8th Mile, Mysore Road, Bangalore, 560059, India}
\email{syn.pal98@gmail.com}
\begin{document}

\keywords{Prehomogeneous vector spaces, symplectic groups, composition algebras, Freudenthal algebras}

\subjclass{17A75, 17C60, 20G05, 20G15}

\begin{abstract}\noindent Let $k$ be a field with $\text{char}(k)\neq 2$. We prove that all maximal flags of composition algebras over $k$, appear as the $k$-rational $Sp_{6}$-orbits in a Zariski-dense $Sp_{6}$-invariant subset $V^{ss}\subset V=\wedge^{3}V_{6}$, where $V_{6}$ is the standard $6$-dimensional irreducible representation of $Sp_{6}$. This gives an arithmetic interpretation for the orbit spaces of the semi-stable sets in the prehomogeneous vector spaces $(Sp_{6}\times GL_{1}^{2},V)$ and $(GSp_{6}\times GL_{1}^{2},V)$. We also get all reduced Freudenthal algebras of dimensions $6$ and $9$, represented by the same orbit spaces.

\end{abstract}

\maketitle

\tableofcontents

\section{Introduction} \label{1} The main aim of this paper is to describe all maximal flags of composition algebras over a field $k$, in terms of the orbit spaces of the semi-stable sets in some prehomogeneous vector spaces. We prove that each orbit from the orbit space of a particular Zariski-dense subset inside a $Sp_{6}$-representation represents a unique $4$-flag of the form $(k\subset K\subset Q\subset O)$; where $K, Q$ and $O$ are quadratic, quaternion, and octonion algebras, respectively. And we can get all such $4$-flags from the same orbit space (see Section \ref{4}).

\vspace{2 mm}

Let $G$ be a connected reductive algebraic group and $V$ be a rational representation of $G$, defined over the field $k$. We assume that $(G,V)$ is a prehomogeneous vector space (called PV, in short) with a relative invariant $f\in k[V]$. We define $V^{ss}=\{v\in V:f(v)\neq 0\}$. Then, the $k$-rational orbit space $V^{ss}_{k}/G_{k}$ has a phenomenon of classifying several interesting algebraic structures (see \cite{WY}, \cite{KY}, \cite{PS}). But in many cases the descriptions of these algebraic objects are not known. In this paper, we will give an answer to this question for the PV's $(Sp_{6}\times GL_{1}^{2},\wedge^{3}V_{6})$ and $(GSp_{6}\times GL_{1}^{2},\wedge^{3}V_{6})$ (see Section \ref{5}).

\vspace{2 mm}

Let us consider the group $Sp_{6}$ and its standard irreducible representation $V_{6}$ of dimension $6$. Then $\wedge^{3}V_{6}$ is a representation of $Sp_{6}$ which decomposes into two irreducible components $X$ and $V_{6}$ itself, where $\text{dim}(X)=14$. Now, let $C$ be any split composition algebra over $k$ or the field $k$ itself, and $\mathcal{H}_{3}(C)=\mathcal{H}_{3}(C,I_{3})$ be the corresponding reduced Freudenthal algebra. Then we can define an algebra structure on the following space \[Z(C)=\left(\begin{array}{cc}
k & \mathcal{H}_{3}(C) \\
\mathcal{H}_{3}(C) & k
\end{array}\right),\] with a homogeneous polynomial of degree $4$ defined on it (see \cite{AF}, \cite{BM}). These are structurable algebras with skew-dimension $1$, and they are very interesting as representations of some subgroups of $GL(Z(C))$, which leave the degree $4$ polynomial defined on $Z(C)$ invariant (see \cite{LM}). The cases where $\text{dim}(C)=1,4,8$, have been discussed in (\cite{PS}), (\cite{IJ}) and (\cite{HJ}), respectively. In particular, for $\text{dim}(C)=1$ (i.e., for $C=k$) we get $Z(C)$ as the irreducible $Sp_{6}$-representation $X\subset \wedge^{3}V_{6}$ of dimension $14$ (see \cite{PS} for details). In this paper, we will discuss the case where $\text{dim}(C)=2$, i.e., $C\simeq k\times k$. To avoid notational ambiguity, we denote $Z(k\times k)$ simply by $Z$. We have $\mathcal{H}_{3}(k\times k)\simeq M_{3}(k)^{+}$, the Jordan algebra arising from the associative algebra $M_{3}(k)$ (see \cite{KMRT}, Chapter IX). We will see that we can identify the $Sp_{6}$-representation $\wedge^{3}V_{6}$ as a vector space with the structurable algebra $Z$, and the quartic homogeneous polynomial defined on $Z$ gives us a $Sp_{6}$-invariant polynomial defined on $\wedge^{3}V_{6}$ (see Section \ref{3}).

\vspace{2 mm}

Again, in (\cite{PS}) we saw that the irreducible $Sp_{6}$-representation $X\subset \wedge^{3}V_{6}$ can be obtained as the span of the $Sp_{6}$-orbit of a $SL_{3}$-invariant alternating trilinear form defined on the following off-diagonal subspace of the split octonion algebra $Zorn(k)$, where $SL_{3}\subset \text{Aut}(Zorn(k))$ acts by fixing the diagonal entries. \[V_{6}\simeq \left(\begin{array}{cc}
0 & k^{3} \\
k^{3} & 0
\end{array}\right)\subset  Zorn(k)\] The octonion $Zorn(k)$ contains all composition algebras of dimensions $1,2$ and $4$ as subalgebras, and the $k$-rational orbit space in the resultant representation $X$ parametrizes all composition algebras defined over the field $k$ (see \cite{PS} for details). Here we will see that we can get a similar description of these algebras if we consider the entire $Sp_{6}$-representation $\wedge^{3}V_{6}=X\oplus V_{6}$, and this time the underlying orbit space can also distinguish all possible inclusions of one composition algebra into the other algebras as a subalgebra (see Section \ref{4}). In other words, this gives us all possible Cayley-Dickson doublings.

\vspace{2 mm}

We can see for a prehomogeneous vector space $(G,V)$ that the semi-stable set $V^{ss}$ is a single orbit over the algebraic closure $\overline{k}$ (see Section \ref{5}). But in general, the $k$-rational orbit space $V^{ss}_{k}/G_{k}$ consists of more than one orbit. This is one reason behind the fact that $V^{ss}_{k}/G_{k}$ classifies several algebraic structures $B$ over $k$ such that $B\otimes \overline{k}\simeq A$ for some fixed algebraic structure $A$ defined over $\overline{k}$, i.e., the $k$-rational orbit space in the semi-stable set classifies the $\overline{k}/k$-forms of some algebraic structure $A$ over $\overline{k}$ (see \cite{SJP}, Chapter III, Section $1$). In this case also, if we take the flag of composition algebras \[ \overline{k}\subset \overline{k}\times \overline{k}\subset M_{2}(\overline{k})\subset Zorn(\overline{k}),\] then it is the only maximal flag of composition algebras over $\overline{k}$ and we get all $\overline{k}/k$-forms of this flag from the orbit spaces in the semi-stable sets of the PV's $(Sp_{6}\times GL_{1}^{2}, \wedge^{3}V_{6})$ and $(GSp_{6}\times GL_{1}^{2},\wedge^{3}V_{6})$ (see Section \ref{5}). The same argument applies for the reduced Freudenthal algebras as well (see Section \ref{6}).

\vspace{2 mm}

In Section \ref{4}, we see that the group $Sp_{6}(k)$ contains any group of type $A_{1}$ and the groups of type $A_{2}$ of the form $SU(h)$ where $h$ is a trivial discriminant $K/k$-hermitian form of rank $3$ for some quadratic extension $K/k$. The groups of type $A_{1}$ appear as the group of automorphisms of an octonion algebra that fix some particular quaternion subalgebra inside the octonion. And the groups of type $A_{2}$ in the above form appear as the group of automorphisms of an octonion algebra fixing some particular quadratic subalgebra (see \cite{JN}, \cite{TML} for details). This gives us an idea of why these maximal flags of composition algebras appear in a very natural way from the $Sp_{6}$-representation.

\vspace{2 mm}

We start with some preliminaries in Section \ref{2}. In Section \ref{3}, we describe the symplectic group $Sp_{6}$, the representation $\wedge^{3}V_{6}$, and the invariants defined on the underlying vector space. In Section \ref{4}, we have described all maximal flags of composition algebras in terms of the $Sp_{6}(k)$-orbits in a Zariski-dense subset of $\wedge^{3}V_{6}$. In Section \ref{5}, we have computed the orbit spaces in the semi-stable sets for the PV's $(Sp_{6}\times GL_{1}^{2},\wedge^{3}V_{6})$ and $(GSp_{6}\times GL_{1}^{2}, \wedge^{3}V_{6})$. Finally, in Section \ref{6}, we have discussed what we can get for Freudenthal algebras using the results proved in the previous sections.

\vspace{2 mm}

Throughout the paper, we will always assume that the characteristic of the underlying field $k$ is different from $2$ unless mentioned otherwise. If $U$ is any object defined over the field $k$, by $U_{k}$ or $U(k)$ we will denote the $k$-rational points in that object. If $A, B(\subset A)$ are two algebras, by $\text{Aut}(A/B)$ we denote the group of automorphisms of $A$ fixing $B$ elementwise, and by $\text{Aut}(A,B)$ we mean the group of all automorphisms of $A$ that maps $B$ onto itself.

\section{Preliminaries}\label{2}

In this section, we define the representations called prehomogeneous vector spaces. We also describe some basic properties of the composition algebras and Freudenthal algebras. For more details on these topics, we refer to (\cite{SK}), (\cite{SV}), (\cite{KMRT}) and (\cite{GPR}).

\subsection{Prehomogeneous vector spaces} Let $G$ be a connected reductive algebraic group and $V$ be a finite dimensional rational representation of $G$, both defined over the field $k$. Then we say that $(G,V)$ is a \textit{prehomogeneous vector space} (PV) if the following two conditions are satisfied:

\begin{enumerate}
    \item $\exists $ a homogeneous polynomial $f\in k[V]$ and a group homomorphism $\chi:G\rightarrow GL_{1}$ such that $f(g.x)=\chi(g)f(x), \forall g\in G, x\in V$;
    \item $V^{ss}=\{x\in V:f(x)\neq 0\}$ is a single $G$-orbit over $\overline{k}$.
\end{enumerate}

In (\cite{SK}), this is the definition of regular PV's with reductive groups (see \cite{SK}, Remark $26$, page 73). However, we will consider this definition for a PV, following the notion of (\cite{IJ1}). We call $V^{ss}$ the set of \textit{semi-stable points}, which is a Zariski-dense $G$-orbit over $\overline{k}$. The polynomial $f$ in condition $(1)$ is called the \textit{relative invariant}.

\vspace{1 mm}

Let $G\subset GL_{n}$ be a connected reductive algebraic group for some $n\in \mathbb{N}$ and $r+s=n$ for some $r,s\in \mathbb{N}$. Take $G_{1}=G\times GL_{r}$, $G_{2}=G\times GL_{s}$, $V_{1}=V_{n}\otimes V_{r}$ and $V_{2}=V_{n}^{\ast}\otimes V_{s}$, where $V_{n},V_{r},V_{s}$ are vector spaces of dimensions $n,r,s$, respectively, and $V_{n}^{\ast}$ denotes the dual of $V_{n}$. Then $V_{1}$ and $V_{2}$ are representations of the groups $G_{1}$ and $G_{2}$, respectively, with obvious actions; and we have the following result from (\cite{IJ1}, Proposition $3.1$) and (\cite{SK}, Proposition $9$, page 38; Remark $26$, page $73$).

\begin{proposition}
    $(G_{1},V_{1})$ is a PV if and only if $(G_{2},V_{2})$ is a PV. Moreover, the orbit spaces in the semi-stable sets are in bijection, i.e., $V_{1}^{ss}(k)/G_{1}(k)\longleftrightarrow V_{2}^{ss}(k)/G_{2}(k)$.
\end{proposition}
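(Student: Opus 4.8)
The plan is to factor the correspondence through Grassmannians, keeping everything defined over $k$. Identify $V_1=V_n\otimes V_r$ with $\mathrm{Hom}(V_r^{*},V_n)$ and $V_2=V_n^{*}\otimes V_s$ with $\mathrm{Hom}(V_s^{*},V_n^{*})$, so that each $x\in V_1$ acquires a rank; we may assume $1\le r<n$ (the cases $r=0$ or $s=0$ being trivial), so that $V_1^{\circ}:=\{x:\mathrm{rk}(x)=r\}$ is a nonempty $G_1$-stable Zariski-open subset defined over $k$ whose complement is a determinantal variety of codimension $s+1\ge 2$, and similarly $V_2^{\circ}:=\{y:\mathrm{rk}(y)=s\}$. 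The first thing I would record is that if $(G_1,V_1)$ is a PV then $V_1^{ss}\subseteq V_1^{\circ}$: by definition $V_1^{ss}$ is the open $G_1$-orbit over $\overline{k}$, and an open orbit cannot lie in the proper closed set $V_1\setminus V_1^{\circ}$; symmetrically for $V_2$.

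Next I would set up two elementary bijections. The morphism $\pi_1\colon V_1^{\circ}\to\mathrm{Gr}(r,V_n)$, $x\mapsto\mathrm{im}(x)$, is surjective, $G$-equivariant and $GL_r$-invariant, and each fibre $\pi_1^{-1}(W)=\mathrm{Isom}(V_r^{*},W)$ is a $GL_r$-torsor possessing a rational point over any field over which $W$ is defined (take a basis of $W$); hence for every $K\supseteq k$ the map $\pi_1$ induces a bijection $V_1^{\circ}(K)/G_1(K)\xrightarrow{\sim}\mathrm{Gr}(r,V_n)(K)/G(K)$, and the same for $\pi_2$ with $\mathrm{Gr}(s,V_n^{*})$. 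On the other hand, since $G\subseteq GL(V_n)$ acts on $V_n^{*}$ by the contragredient action the evaluation pairing is $G$-invariant, so $W\mapsto W^{\perp}$ is a $G$-equivariant isomorphism $\mathrm{Gr}(r,V_n)\xrightarrow{\sim}\mathrm{Gr}(s,V_n^{*})$ over $k$ with $(W^{\perp})^{\perp}=W$, which likewise induces bijections of $G(K)$-orbit spaces.

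Putting $K=\overline{k}$ in these identifications shows that $G_1$ has a dense orbit on $V_1$ iff $G$ has a dense orbit on $\mathrm{Gr}(r,V_n)$ iff (via $\perp$) $G$ has a dense orbit on $\mathrm{Gr}(s,V_n^{*})$ iff $G_2$ has a dense orbit on $V_2$; and when this holds, the open $G_i$-orbit $O_i$ is the $\pi_i$-preimage of the open $G$-orbit on the relevant Grassmannian. To promote ``dense orbit'' to ``PV'' one must check that $V_i\setminus O_i$ is a hypersurface, for then, $G_i$ being connected, it is the zero set of a semi-invariant $f_i$ and $V_i^{ss}=O_i$. Granting this, the asserted bijection follows by a chase: by the first paragraph $V_1^{ss}(k)/G_1(k)$ is exactly the subset of $V_1^{\circ}(k)/G_1(k)$ consisting of those $[x]$ with $x$ in the $\overline{k}$-open orbit, equivalently with $\pi_1(x)$ in the open $G$-orbit $\overline{O}\subseteq\mathrm{Gr}(r,V_n)$; transporting through $\pi_1$, $\perp$ and $\pi_2^{-1}$, which carry $\overline{O}$ to the open $G$-orbit on $\mathrm{Gr}(s,V_n^{*})$ and thence to $O_2=V_2^{ss}$, yields $V_1^{ss}(k)/G_1(k)\longleftrightarrow V_2^{ss}(k)/G_2(k)$.

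The real content, and the step I expect to be the main obstacle, is the one just granted: that castling preserves the hypersurface property of the singular set. When $(G_1,V_1)$ is a PV one checks, using that $\pi_1$ is flat with $GL_r$-orbit fibres, that $\mathrm{Gr}(r,V_n)\setminus\overline{O}$ is a nonempty pure-codimension-one $G$-stable closed set, hence (the Picard group of a Grassmannian being $\mathbb{Z}$) has ample support, and the same holds for $\mathrm{Gr}(s,V_n^{*})\setminus\overline{O}^{\perp}$. The point is then that on the $V_2$ side the determinantal locus $V_2\setminus V_2^{\circ}$, although of codimension $\ge 2$, is absorbed into the closure of $\pi_2^{-1}(\mathrm{Gr}(s,V_n^{*})\setminus\overline{O}^{\perp})$: any rank $\le s-1$ map $y$ has $\mathrm{im}(y)\subseteq W'$ for some $W'$ lying outside $\overline{O}^{\perp}$, because the positive-dimensional linear family $\{W':W'\supseteq\mathrm{im}(y)\}$ must meet the ample divisor $\mathrm{Gr}(s,V_n^{*})\setminus\overline{O}^{\perp}$. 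Hence $V_2\setminus O_2$ coincides with the pure-codimension-one set $\overline{\pi_2^{-1}(\mathrm{Gr}(s,V_n^{*})\setminus\overline{O}^{\perp})}$, so $(G_2,V_2)$ is a PV, and the converse is symmetric; alternatively, this entire step is exactly \cite[Prop.~9 and Remark~26]{SK} together with \cite[Prop.~3.1]{IJ1}, and everything else in the argument is formal.
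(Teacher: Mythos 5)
Your argument is correct and reconstructs the standard Grassmannian proof of the castling theorem. The paper itself does not prove this proposition --- it quotes it directly from Igusa (\cite{IJ1}, Proposition 3.1) and Sato--Kimura (\cite{SK}, Proposition 9, Remark 26) --- and those references use exactly the mechanism you describe: $\pi_1,\pi_2$ as Zariski-locally trivial $GL_r$- and $GL_s$-bundles over Grassmannians, the $G$-equivariant duality $W\mapsto W^{\perp}$ between $\mathrm{Gr}(r,V_n)$ and $\mathrm{Gr}(s,V_n^{*})$, and the observation that the rank-deficient locus $V_2\setminus V_2^{\circ}$, though of codimension $\ge 2$, is absorbed into the closure of $\pi_2^{-1}(\mathrm{Gr}(s,V_n^{*})\setminus\overline{O}^{\perp})$, so that $V_2\setminus O_2$ remains a hypersurface. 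If you wished to tighten the sketch, the two points deserving an explicit sentence are that $\overline{O}$ (hence each $O_i$) is defined over $k$ --- which follows since $O_1=\{f_1\neq 0\}$ with $f_1\in k[V_1]$ and $\pi_1$ admits local rational sections --- and that ``flat with $GL_r$-orbit fibres'' is most cleanly stated as ``Zariski-locally trivial $GL_r$-bundle,'' which gives both directions of the codimension transfer between $V_1^{\circ}$ and $\mathrm{Gr}(r,V_n)$ at once.
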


The PV's $(G_{1},V_{1})$ and $(G_{2},V_{2})$ in the above form are called \textit{castling transforms} of each other. If $f$ and $g$ are the relative invariants, respectively, then $\text{deg}(f)=rd$ and $\text{deg}(g)=sd$, for some $d\in \mathbb{N}$ (see \cite{SK}, Proposition $18$, page 68). In general, if a PV $(G_{1},V_{1})$ can be obtained from another PV $(G_{2},V_{2})$ through finitely many castling transforms lying between them, we say that $(G_{1},V_{1})$ and $(G_{2},V_{2})$ are \textit{castling equivalent}. The above result says that if we can determine the orbit space in the semi-stable set $V^{ss}_{k}/G_{k}$ for a PV $(G,V)$, the same is determined for all PV's castling equivalent to $(G,V)$. For more details, we refer to (\cite{SK}), (\cite{IJ1}).

\subsection{Maximal flags of composition algebras} Let $k$ be a field with $\text{char}(k)\neq 2$. A \textit{composition algebra} $C$ over $k$ is a unital (not necessarily associative) algebra with a non-degenerate quadratic form $N_{C}$ defined on it such that \[N_{C}(xy) =N_{C}(x)N_{C}(y),\forall x,y\in C. \] The quadratic form $N_{C}$ is called the \textit{norm} of the composition algebra $C$ and we write the algebra as $(C,N_{C})$. Let $1_{C}\in C$ be the identity element and $b_{N_{C}}$ be the polar form of the norm $N_{C}$, i.e., \[b_{N_{C}}(x,y)=N_{C}(x+y)-N_{C}(x)-N_{C}(y), \forall x,y\in C.\] Then every element $x\in C$ satisfies the following quadratic equation \[x^{2}-b_{N_{C}}(x,1_{C}).x+N_{C}(x).1_{C}=0.\] We define a map on $C$ by $x\mapsto \overline{x}=b_{N_{C}}(x,1_{C}).1_{C}-x$, $x\in C$; which is called the \textit{conjugation} involution. From the above equation, we can easily see that $x\overline{x}=N_{C}(x),\forall x\in C$. So, if the norm $N_{C}$ is anisotropic, i.e., $N_{C}(x)\neq 0,\forall x\in C-\{0\}$; $C$ is a \textit{division} algebra. Otherwise, $C$ contains zero divisors, and these algebras are called \textit{split} composition algebras. The field $C=k$ itself is a division composition algebra with the norm $N_{C}(x)=x^{2}, x\in k$.

\vspace{2 mm}

\noindent \textbf{Cayley-Dickson doubling:} Let $(D,N_{D})$ be any composition algebra, $C=D\oplus D$ and $\lambda \in k^{\times}$. We define a multiplication and a norm $N_{C}$ on $C$ in the following way:

\begin{center}
$(x,y).(u,v)=(xu+\lambda \overline{v}y, vx+y\overline{u}), x,y,u,v\in D$;\\
\vspace{1 mm}

$N_{C}((x,y))=N_{D}(x)-\lambda N_{D}(y), x,y\in D.$
\end{center}

\noindent If $(D,N_{D})$ is associative, $(C,N_{C})$ is a composition algebra with respect to the multiplication defined above. And $(C,N_{C})$ is associative if and only if $(D,N_{D})$ is commutative and associative. The algebra $C$ contains $D$ as a composition subalgebra. Thus, from a given composition algebra, we can produce a composition algebra which has double dimension. This process is called the \textit{Cayley-Dickson doubling}. Conversely, any composition algebra can be obtained through this doubling process starting from the field $k$ (see \cite{SV}, Chapter $1$). If we change the constant $\lambda \in k^{\times}$ in the doubling process described above, the resultant composition algebra $C$ may change. In particular, for a given associative composition algebra $(D,N_{D})$, the set of all isomorphism classes of composition algebras $C$ obtained by Cayley-Dickson doubling from $D$, is in bijection with \[k^{\times}/\{N_{D}(x):x\in D^{\times}\}.\] In other words, the above set is in bijection with the composition algebras containing $D$ as a composition subalgebra (see \cite{SV}, Chapter $1$).

\vspace{2 mm}

Composition algebras exist in dimensions $1,2,4$ and $8$. The field $k$ itself is the only composition algebra of dimension $1$. In dimension $2$, the composition algebras are the quadratic extensions over $k$ and the quadratic algebra $k\times k$. Composition algebras with dimensions $4$ and $8$ are called \textit{quaternion} and \textit{octonion} algebras, respectively. Quaternion algebras are associative, but not commutative. Octonion algebras are neither commutative nor associative. So, the Cayley-Dickson doubling process starting from the field $k$ always generates a \textit{maximal flag} of composition algebras of the form $(k\subset K\subset Q\subset C)$, where $K$, $Q$, $C$ are composition algebras of dimensions $2$, $4$, $8$, respectively, each of which has been obtained from the earlier subalgebra by the doubling process.

\vspace{2 mm}

An \textit{isomorphism} between two composition algebras $(C_{1},N_{C_{1}})$ and $(C_{2},N_{C_{2}})$ is an isomorphism between the underlying algebras. This isomorphism gives rise to an isometry between the norms as well. Conversely, if the norms $N_{C_{1}}$ and $N_{C_{2}}$ are isometric, the composition algebras $C_{1}$ and $C_{2}$ are isomorphic. Up to isomorphism, there are unique split composition algebras in each of the dimensions $2,4$ and $8$. By an \textit{isomorphism} between two maximal flags $(k_{i}\subset K_{i}\subset Q_{i}\subset C_{i})$ for $i=1,2$, we mean an algebra isomorphism $\phi:C_{1}\rightarrow C_{2}$ such that $\phi(Q_{1})=Q_{2}$ and $\phi(K_{1})=K_{2}$. For more details on composition algebras, we refer the reader to (\cite{SV}, Chapter $1$).

\subsection{The split composition algebras} The split composition algebras in dimensions $2$ and $4$ are $k\times k$ and $M_{2}(k)$, respectively. The norms are the hyperbolic norms, i.e., the usual determinant for $M_{2}(k)$. The split octonion algebra can be described by defining a multiplication and norm on the following vector space of dimension $8$, \[Zorn(k)= \left(\begin{array}{cc}
k & k^{3} \\
k^{3} & k
\end{array}\right).\] The multiplication on $Zorn(k)$ we define by  \[\left(\begin{array}{cc}
  a & x \\
  y & b
  \end{array}\right). 
  \left(\begin{array}{cc}
  a^{\prime} & x^{\prime} \\
  y^{\prime} & b^{\prime}
  \end{array}\right) 
  =
  \left(\begin{array}{cc}
  aa^{\prime}+x^{t}y^{\prime} & ax^{\prime}+b^{\prime}x+y\times y^{\prime} \\
  a^{\prime}y+by^{\prime}+x\times x^{\prime} & bb^{\prime}+y^{t}x^{\prime} 
  \end{array}\right),\]

\noindent where $a,b,a^{\prime},b^{\prime}\in k$; $x,y,x^{\prime},y^{\prime}\in k^{3}$ and $`\times$' denotes the standard cross product on $k^{3}$. The space $Zorn(k)$ is a non-associative $k$-algebra with $\left(\begin{array}{cc}
1 & 0 \\
0 & 1
\end{array}\right)$
as the identity element, with respect to the above multiplication. This algebra is called the \textit{Zorn algebra} of vector matrices, which can be obtained by doubling the split quaternion algebra $M_{2}(k)$. The norm on $Zorn(k)$ is given by \[N(\left(\begin{array}{cc}
  a & x \\
  y & b
  \end{array}\right))= ab-x^{t}y, \text{ where } a,b\in k \text{ and } x,y \in k^{3}. \]

\subsection{Pfister forms and the discriminant} A \textit{Pfister form} over the field $k$ is a quadratic form of the form $\langle1,-a_{1}\rangle\otimes...\otimes \langle 1,-a_{n}\rangle$, for some $a_{1},..,a_{n}\in k^{\times}$. Sometimes we denote it by simply writing $\langle \langle a_{1},..,a_{n}\rangle\rangle$. Clearly, a Pfister form is non-degenerate and the underlying quadratic space has dimension $2^{n}$. If a Pfister form is isotropic, it is hyperbolic. The norms of the composition algebras are always Pfister forms. For more details, we refer to (\cite{LTY}, Chapter X).

\vspace{2 mm}

Let $K/k$ be a quadratic algebra and $(V,h)$ be a non-degenerate $K/k$-hermitian space of rank $n$ over $K$. Let $H=(h_{ij})_{n\times n}$ be the matrix representing $h$ with respect a fixed $K$-basis of $V$. Then we define the \textit{discriminant} of $h$ as \[\text{disc}(h)=(-1)^{\frac{n(n-1)}{2}}\text{det}(H).N(K/k)\in k^{\times}/N(K/k),\] where $N(K/k)\subset k^{\times}$ is the group of all norm values of the elements in $K^{\times}$ (see \cite{KMRT}, Chapter II). We say that $(V,h)$ is a \textit{trivial discriminant} hermitian form if $\text{det}(H)\in N(K/k)$. So, if $n=3$ and $(V,h)$ has trivial discriminant, then according to the above definition $\text{disc}(h)=(-1).N(K/k)$.

\subsection{The octonions and hermitian forms} Let $(C,N_{C})$ be an octonion algebra defined over the field $k$ and $K\subset C$ be a quadratic subalgebra. Let $K=k(\sqrt{a})$, for some $a\in k^{\times}$. Then $C$ is a $K$-module where $K$ acts on $C$ by the left multiplication, and we can define \[h(x,y)=b_{N_{C}}(x,y)+a^{-1}\sqrt{a}.b_{N_{C}}(\sqrt{a}x,y), \text{ for }x,y\in C,\] which is a non-degenerate $K/k$-hermitian form on $C$ (see \cite{JN} for details). If we have any $K$-submodule $D$ in $C$, the orthogonal complement $D^{\perp}\subset C$ with respect to the norm $N_{C}$ coincides with the orthogonal complement of $D$ with respect to the hermitian form $h$. Then $h_{1}$, the restriction of $h$ to $K^{\perp}\subset C$, is a trivial discriminant non-degenerate ternary $K/k$-hermitian form such that $C=K\oplus K^{\perp}$ and \[N_{C}((\alpha,x))=N(\alpha)+h_{1}(x,x), \forall \alpha\in K, x\in K^{\perp},\] (see \cite{JN}, \cite{TML} for details). Again, we assume that $Q\subset C$ is a quaternion subalgebra containing $K$, i.e., $Q$ is a $K$-submodule of $C$. Let $SL_{1}(Q)$ be the group of all elements in $Q$ with norm $1$. Then $SL_{1}(Q)\subset \text{Aut}(C)$ is the group of all algebra automorphisms of $C$ that fixes $Q$ pointwise (see \cite{SV}, Chapter $2$). If $h_{2}$ is the restriction of $h$ to $Q^{\perp}$, then for $y\in SL_{1}(Q)$ the map $x\mapsto yx$ ($\forall x\in Q^{\perp}$) is an element of $SU(h_{2})$ and we can easily see that (see \cite{JN}; \cite{SV}, Chapter $1,2$) \[SL_{1}(Q)\simeq SU(h_{2}).\] In particular, we can choose the matrix representative of $h_{1}$ to be of the form $\text{diag}(y_{1},y_{2},y_{3})$, where the same for $h_{2}$ is $\text{diag}(y_{2},y_{3})$ and $y_{1}y_{2}y_{3}$ is a $K/k$-norm. And the norm $N_{C}$ on the octonion $C$ is the trace form of the $K/k$-hermitian form $h$ given by $\text{diag}(1,y_{1},y_{2},y_{3})$, i.e., $N_{C}(x)=h(x,x),\forall x\in C$.

\subsection{Freudenthal algebras} For the rest of this section we assume that the field $k$ has characteristic different from $2$ and $3$. Let $(C,N_{C})$ be a composition algebra over the field $k$, and $M_{3}(C)$ be the space of all square matrices of order $3$ with entries from the algebra $C$. For $\Gamma=\text{diag}(\gamma_{1},\gamma_{2},\gamma_{3})\in GL_{3}(k)$, we can define an involution on $M_{3}(C)$ by $X^{\ast \Gamma}=\Gamma^{-1}\overline{X}^{t}\Gamma$, where $X=(x_{ij})_{3\times 3}\in M_{3}(C)$ and $\overline{X}=(\overline{x_{ij}})_{3\times 3}$. Let $\mathcal{H}_{3}(C,\Gamma)=\{X\in M_{3}(C): X^{\ast \Gamma}=X\}$ and we define a multiplication on this subspace of $M_{3}(C)$ by \[X.Y=\frac{1}{2}(XY+YX),\text{ for } X,Y\in \mathcal{H}_{3}(C,\Gamma),\] where $XY$ denotes the usual product of the matrices $X,Y\in M_{3}(C)$. Then with respect to the multiplication defined above $\mathcal{H}_{3}(C,\Gamma)$ forms an algebra with $I_{3}\in M_{3}(C)$ as the identity element, and these algebras are called \textit{reduced Freudenthal algebras}. All of these algebras contain zero divisors. We have $\text{dim}(\mathcal{H}_{3}(C,\Gamma))=3(\text{dim}(C)+1).$ In general, let $\mathcal{A}$ be an algebra over $k$ such that, for some field extension $K/k$, $\mathcal{A}\otimes K$ is isomorphic to a reduced Freudenthal algebra defined over the field $K$. Then $\mathcal{A}$ is called a \textit{Freudenthal algebra}. These algebras are cubic Jordan algebras and every element $X\in \mathcal{A}$ satisfies a cubic polynomial of the form \[X^{3}-T(X)X^{2}+S(X)X-N(X).I_{3}=0.\] The map $X\mapsto T(X)$ is called the \textit{generic trace} on $\mathcal{A}$. We can define a bilinear form $T_{\mathcal{A}}:\mathcal{A}\times \mathcal{A}\rightarrow k$ by $T_{\mathcal{A}}(X,Y)=T(X.Y)$, for $X,Y\in \mathcal{A}$, which is called the \textit{bilinear trace form} on $\mathcal{A}$. We can also define a quadratic map on $\mathcal{A}$ by $X\mapsto X^{\#}=X^{2}-T(X)X+S(X)$ for $X\in \mathcal{A}$, which is called the \textit{Freudenthal adjoint}. Let $\times :\mathcal{A}\times \mathcal{A}\rightarrow \mathcal{A}$ be the associated bilinear map defined by $X\times Y=(X+Y)^{\#}-X^{\#}-Y^{\#}$, for $X,Y\in \mathcal{A}$, which is called the \textit{Freudenthal cross product}. As we can see from the above equation, for all $X\in \mathcal{A}$ we have $X.X^{\#}=N(X).I_{3}$, where $N(X)$ is called the \textit{generic norm} of $X\in \mathcal{A}$. For more details, we refer to (\cite{KMRT}, Chapter IX) and (\cite{GPR}).

\section{The symplectic group and its representations}\label{3} In this section, we define the symplectic group of order $6$ and describe the representation of dimension $20$ for the same group, which decomposes into two irreducible representations. We also describe the $Sp_{6}$-invariant polynomials defined on the underlying vector space of the representation. 

\subsection{The symplectic group} Let $V_{6}$ be the vector space of dimension $6$ over the field $k$, with $\mathcal{B}=\{e_{1},e_{2},..,e_{6}\}$ as the standard basis. Then up to isometry there exists a unique non-degenerate alternating bilinear form $J$ on $V_{6}$, which is represented by the matrix \[ M_{J}= \left(\begin{array}{cc}
0 & I_{3} \\
-I_{3} & 0
\end{array}\right), \] with respect to the basis $\mathcal{B}$. We define the \textit{symplectic group} $Sp_{6}$ as the group of all isometries of $J$. So we get \[ Sp_{6}=\{g\in GL_{6}: gM_{J}g^{t}=M_{J}\}.\] We also define the group of all similitudes of the alternating form $J$ as the \textit{general symplectic group}, which is given by \[ GSp_{6}=\{g\in GL_{6}:gM_{J}g^{t}=\lambda(g)M_{J}, \lambda(g)\in GL_{1}\}.\]

\subsection{The representation} We consider the action of $Sp_{6}$ on $V=\wedge^{3}V_{6}$, induced by the action of $Sp_{6}$ on $V_{6}$. Then $V$ is a representation of $Sp_{6}$ defined over the field $k$, and we have \[V=\wedge^{3}V_{6}=\text{span} \{ e_{i}\wedge e_{j}\wedge e_{l}:1\leq i<j<l\leq 6\}.\] From now on, we will write $e_{i}e_{j}e_{l}$ for $e_{i}\wedge e_{j}\wedge e_{l}$. We have a natural contraction map $\psi : \wedge^{3}V_{6}\rightarrow V_{6}$ defined by \[\psi (v_{1}\wedge v_{2}\wedge v_{3})=J(v_{2},v_{3})v_{1}-J(v_{1},v_{3})v_{2}+J(v_{1},v_{2})v_{3},\text{ for }v_{1},v_{2},v_{3}\in V_{6},\] where $J$ is the non-degenerate alternating bilinear form defined on $V_{6}$ (see \cite{FH}, Lecture $17$). Clearly, $\psi$ is surjective and $X=\text{ker}(\psi)\subset V$ is invariant under the action of $Sp_{6}$. Then $\text{dim}(X)=14$ and $X$ is an irreducible representation of $Sp_{6}$ (see \cite{IJ}, \cite{YA1}). If we consider the $6$-dimensional subspace of $V$ spanned by the following subset \[\{e_{r_{1}}e_{2}e_{5}+e_{r_{1}}e_{3}e_{6},e_{r_{2}}e_{1}e_{4}+e_{r_{2}}e_{3}e_{6}, e_{r_{3}}e_{1}e_{4}+e_{r_{3}}e_{2}e_{5}: r_{1}=1,4; r_{2}=2,5;r_{3}=3,6\}\subset V,\] then the subspace is also $Sp_{6}$-invariant (see \cite{YA1}, Section $1$) and isomorphic to $V_{6}$ as a representation of $Sp_{6}$ through the above contraction $\psi$. It is straightforward to check that as a vector space $V$ is a direct sum of the above $6$-dimensional subspace and $X=\text{ker}(\psi)\subset V$. So, we get that as a $Sp_{6}$-representation $V=X\oplus V_{6}$, where $X=\text{ker}(\psi)$ is the irreducible representation of dimension $14$ and $V_{6}$ is the standard irreducible representation of dimension $6$ (see \cite{YA1}, Section $1$).

\vspace{2 mm}

\noindent\textbf{The identification of $V$ with $Z$:} We have $V=\wedge^{3}V_{6}=\text{span} \{ e_{i} e_{j} e_{l}:1\leq i<j<l\leq 6\}$, a vector space of dimension $20$. For $x\in V$, let us denote the coordinates by $x_{ijl}$ with respect to the above basis. We can choose an ordering of this basis of $V=\wedge^{3}V_{6}$ such that we can identify $V$ with $Z$ (described in Section \ref{1}) in the following way. $$V=\wedge^{3}V_{6}=\{(-x_{0},-y_{0},(a_{ij}),(b_{ij})):x_{0},y_{0} \text{ are scalars and } (a_{ij}),(b_{ij}) \text{ are }3\times 3 \text{ matrices}\},$$ where $x_{0}=-x_{123}, y_{0}=-x_{456}$ and the matrices $(a_{ij}),(b_{ij})$ are given by
\begin{center}
    
$(a_{ij})=\left(\begin{array}{ccc}
x_{423} & x_{143} & x_{124} \\
x_{523} & x_{153} & x_{125} \\
x_{623} & x_{163} & x_{126}
\end{array}\right)$ and $(b_{ij})=\left(\begin{array}{ccc}
x_{156} & x_{416} & x_{451} \\
x_{256} & x_{426} & x_{452} \\
x_{356} & x_{436} & x_{453}
\end{array}\right).$

\end{center}

\vspace{1 mm}

Let $\text{char}(k)\neq 2,3$, and we consider the Jordan algebra $M_{3}(k)^{+}\simeq \mathcal{H}_{3}(k\times k, I_{3})$ with the underlying vector space as $M_{3}(k)$ (see \cite{KMRT}, Chapter IX). We have \[Z=\left(\begin{array}{cc}
k & M_{3}(k)^{+} \\
M_{3}(k)^{+} & k
\end{array}\right)\] and we can get an algebra structure on $Z$ by defining an appropriate multiplication on it using the Jordan algebra structure of $M_{3}(k)^{+}$, which makes $Z$ a structurable algebra of skew-dimension $1$ (see \cite{AF}, \cite{BM} for details). As a vector space (also as a $Sp_{6}$-representation) we can identify $V$ with $Z$ from the above description of $V$ by assigning \[(-x_{0},-y_{0},(a_{ij}),(b_{ij}))\in V \text{ to }\left(\begin{array}{cc}
-x_{0} & (a_{ij}) \\
(b_{ij}) & -y_{0}
\end{array}\right) \in Z.\] We can consider this identification between two $Sp_{6}$-representations, in the case when $\text{char}(k)=3$ as well (avoiding the algebra structure on $Z$).

\subsection{The invariants} Now we will describe the $Sp_{6}$-invariant polynomials defined on $V$. For $1\leq m\leq 6$, we define the operator $\partial/\partial e_{m}$ on $V$ by \[ \frac{\partial}{\partial e_{m}}(e_{i}\wedge e_{j}\wedge e_{l})=\delta_{im} e_{j}\wedge e_{l} - \delta_{jm}e_{i}\wedge e_{l} + \delta_{lm} e_{i}\wedge e_{j},\] where $\delta_{rs}$ is the Kronecker delta. Let $\tau =e_{1}\wedge e_{2}\wedge e_{3}\wedge e_{4}\wedge e_{5} \wedge e_{6}$. If $\tilde{x} \in V$, then we can write \[\tilde{x}=\sum_{i,j,l} x_{ijl}e_{i}\wedge e_{j}\wedge e_{l}.\] For $1\leq i,j\leq 6$, we define a homogeneous polynomial $\phi_{ij}$ of degree $2$ on $V$ by \[ \tilde{x}\wedge \frac{\partial}{\partial e_{i}}(\tilde{x})\wedge e_{j} = \phi_{ij}(\tilde{x})\tau.\] If we take $\phi(\tilde{x})=(\phi_{ij}(\tilde{x}))_{6\times 6}$, then we have the following result from (\cite{SK}, Proposition $7$, Section $5$).

\begin{proposition}\label{P2.1} Let us consider $V=\wedge^{3}V_{6}$ as the $GL_{6}$-representation with the obvious action. Then 

    \begin{enumerate}
        \item $\phi(g.\tilde{x})=\text{det}(g). g\phi(\tilde{x})g^{-1}$ for $g\in GL_{6}$;
        \item $\phi(\tilde{x})^{2}=f(\tilde{x}).I_{6}$, for some irreducible polynomial $f(\tilde{x})$ of degree $4$. Moreover, $f(\tilde{x})$ is a relative invariant on $\wedge^{3}V_{6}$, corresponding to the character $(\text{det}(g))^{2}$ for $g\in GL_{6}$.
    \end{enumerate}
\end{proposition}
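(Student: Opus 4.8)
The plan is to exploit the $GL_6$-covariance of the matrix-valued quadratic map $\phi$ and reduce everything to a single "generic" point of $\wedge^3 V_6$, using Zariski density. First I would establish claim (1): that $\phi(g.\tilde x) = \det(g)\, g\,\phi(\tilde x)\, g^{-1}$ for all $g \in GL_6$. Here I would unwind the definition $\tilde x \wedge \tfrac{\partial}{\partial e_i}(\tilde x)\wedge e_j = \phi_{ij}(\tilde x)\,\tau$ and use three facts: that $g$ acts on $\wedge^3 V_6$ diagonally (so $g.\tilde x \wedge g.\tfrac{\partial}{\partial e_i}\tilde x \wedge g.e_j$ picks up a factor $\det(g)$ when compared against $\tau = e_1\wedge\cdots\wedge e_6$), that the contraction operators transform contravariantly, i.e. $\tfrac{\partial}{\partial e_i}(g.\tilde x) = \sum_m (g^{-1})_{mi}\, g.(\tfrac{\partial}{\partial e_m}\tilde x)$ — essentially because $\partial/\partial e_i$ is the interior product by the dual basis vector $e_i^*$, and $g$ acts on $V_6^*$ by the inverse transpose — and that $e_j = \sum_m g_{\cdot}$... more precisely $g.e_j = \sum_m g_{mj} e_m$ written in terms of the image basis. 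Assembling these three transformation rules, the bilinear pairing $\phi_{ij}$ in the $\tilde x$ world becomes the matrix product $\det(g)\, g\,\phi(\tilde x)\, g^{-1}$ in the $g.\tilde x$ world; this is a bookkeeping computation with index juggling, not a conceptual difficulty.

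For claim (2), the key observation is that $\phi(\tilde x)^2$ is a matrix whose entries are homogeneous polynomials of degree $4$ in $\tilde x$, and by claim (1) it satisfies
\[
\phi(g.\tilde x)^2 = \det(g)^2\, g\,\phi(\tilde x)^2\, g^{-1}.
\]
I would first show $\phi(\tilde x)^2$ is a scalar matrix for every $\tilde x$. Over $\overline{k}$ the action of $GL_6$ on $\wedge^3 V_6$ has a Zariski-dense orbit (this is the classical prehomogeneity of $(\mathrm{GL}_6, \wedge^3 V_6)$; alternatively one can invoke that a generic $3$-form in six variables is $GL_6$-equivalent to a fixed normal form), so it suffices to verify $\phi(\tilde x_0)^2 = c\cdot I_6$ at a single explicit $\tilde x_0$ — e.g. $\tilde x_0 = e_1 e_2 e_3 + e_4 e_5 e_6$ or a similarly symmetric representative — by direct computation; then for $\tilde x = g.\tilde x_0$ in the dense orbit the covariance forces $\phi(\tilde x)^2 = \det(g)^2\, g\,(c I_6)\,g^{-1} = c\det(g)^2\, I_6$, still scalar, and by continuity/Zariski closure $\phi(\tilde x)^2$ is scalar on all of $\wedge^3 V_6$. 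Define $f(\tilde x)$ to be this scalar, so $\phi(\tilde x)^2 = f(\tilde x) I_6$; taking any matrix entry on the diagonal shows $f$ is a polynomial, and it is homogeneous of degree $4$ since $\phi$ is homogeneous of degree $2$.

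It then remains to see that $f$ is a relative invariant for the character $\det(g)^2$ and that it is irreducible. The relative-invariance is immediate: taking determinants (or traces) in $\phi(g.\tilde x)^2 = \det(g)^2\, g\,\phi(\tilde x)^2 g^{-1}$ gives $f(g.\tilde x) = \det(g)^2 f(\tilde x)$. For irreducibility I would argue that $f$ must be a power of a single irreducible relative invariant (since any factor of a relative invariant is again a relative invariant, and the relative invariants of a prehomogeneous vector space form a free abelian monoid on the irreducible ones by Sato–Kimura theory, cf. \cite{SK}); so $f = p^m$ for an irreducible relative invariant $p$ of degree $4/m$ with character $\det(g)^{2/m}$. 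The castling/degree bookkeeping of Proposition~2.1's preamble — combined with the fact that the smallest power of $\det$ occurring as a character on $\wedge^3 V_6$ is computed from the $GL_1$-weight of $f$ — rules out $m = 2$ and $m = 4$, forcing $m = 1$. The main obstacle I expect is precisely this last point: pinning down that $f$ is not itself a perfect square. This is handled cleanly either by exhibiting that the zero locus $\{f = 0\}$ is a reduced (irreducible) hypersurface — for instance by checking at one smooth point that $f$ vanishes to order exactly $1$ — or by citing the Sato–Kimura classification directly, where this $\wedge^3$ of $\mathrm{Sp}_6$-type space appears with an explicitly irreducible quartic. Everything else is the index computation in claim (1) plus a single normal-form evaluation, both routine.
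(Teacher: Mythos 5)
The paper does not actually prove this statement; it is quoted verbatim from Sato--Kimura \cite{SK} (Proposition~7, \S 5) and no argument appears in the text. Your sketch is therefore a reconstruction of the cited result rather than an alternative to an argument in the paper. The strategy — use the $GL_6$-covariance of $\phi$, then reduce the scalarity of $\phi^2$ to a single normal-form check via Zariski density of the open orbit — is exactly how one proves this, and I have no objection to the overall plan.

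Two points need attention, though. First, the displayed contraction rule $\tfrac{\partial}{\partial e_i}(g.\tilde x)=\sum_m (g^{-1})_{mi}\,g.\tfrac{\partial}{\partial e_m}\tilde x$ has $g$ decorated incorrectly: since $\partial/\partial e_i=\iota_{e_i^*}$ and interior multiplication satisfies $\iota_{\xi}(g.x)=g.\iota_{\xi\circ g}(x)$, one gets $\iota_{e_i^*}(g.x)=\sum_m g_{im}\,g.\iota_{e_m^*}(x)$. Assembled with $e_j=\sum_l (g^{-1})_{lj}\,g.e_l$ and the $\det g$ from pushing $\tau$ through $g$, the corrected rule does yield $\phi(g.\tilde x)=\det(g)\,g\phi(\tilde x)g^{-1}$, so part (1) survives, but the formula as written would mislead a reader carrying out the computation. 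Second, and more substantively, the irreducibility argument is not yet a proof. The monoid-of-relative-invariants theorem gives $f=\prod p_i^{m_i}$, but to conclude $f=p^m$ for a single irreducible $p$ you need to know that $(GL_6,\wedge^3 V_6)$ has only one irreducible relative invariant, i.e.\ that the singular hypersurface is irreducible; this is itself nontrivial. Degree/character bookkeeping alone does not exclude $m=2$: $\mathrm{tr}\,\phi$ is a priori a degree-$2$ relative invariant with character $\det$, and only after observing $\mathrm{tr}\,\phi(\tilde x)\,\tau=\tilde x\wedge\sum_i\tfrac{\partial}{\partial e_i}(\tilde x)\wedge e_i=3\,\tilde x\wedge\tilde x=0$ (an odd-degree form wedged with itself vanishes) does that candidate disappear. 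Checking that $f$ vanishes to order one at a smooth point is a clean way to rule out $m\ge 2$, but it does not by itself exclude $f$ being a product of two distinct degree-$2$ irreducibles, which again requires the irreducibility of the singular locus. In practice all of this is precisely the content of the Sato--Kimura classification, so the paper's decision to cite rather than re-prove is the honest one; if you want a self-contained argument you would need to add the $\mathrm{tr}\,\phi\equiv 0$ observation and an explicit argument that $\{f=0\}$ is irreducible.
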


As we can see from the above proposition, $f$ is an absolute invariant defined on $V$ under the $Sp_{6}$-action, as $\text{det}(g)=1,\forall g\in Sp_{6}$. If we take \[f_{1}=(-\frac{1}{4}f)|_{X},\] it is the same $Sp_{6}$-invariant quartic homogeneous irreducible polynomial defined on $X$ as mentioned in (\cite{IJ}), (\cite{PS}). Now we define the polynomial $f_{2}$ on $V$ by \[f_{2}(\tilde{x})=(-\frac{1}{2})v^{t}M_{J}\phi(x) v, \text{ for } \tilde{x}=(x,v)\in V=X\oplus V_{6}.\] We can directly check that $f_{2}$ is again an absolute invariant on $V$ under the action of the group $Sp_{6}$ (see Proposition \ref{P2.1}). Finally, we get the following proposition for the PV $(Sp_{6}\times GL_{1}^{2},V)$ from (\cite{SKa}, Proposition $5.2$).

\begin{proposition}
    The PV $(Sp_{6}\times GL_{1}^{2},V)$ has two irreducible relative invariants $f_{1}$ of degree $4$ and $f_{2}$ of degree $4$. Let $\chi_{i}$ be the characters corresponding to $f_{i}$, for $i=1,2$. Then we have $\chi_{1}((g,a,b))=a^{4}$ and $\chi_{2}((g,a,b))=a^{2}b^{2}$, for $g\in Sp_{6}$, $a,b\in GL_{1}$. 
\end{proposition}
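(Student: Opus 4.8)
The plan is to check by hand that the two polynomials $f_1$ and $f_2$ already constructed are relative invariants with the asserted characters, and then to combine this with the general structure of the relative invariants of a PV and the classification in \cite{SKa} to conclude that nothing else occurs. Throughout I use $V=X\oplus V_6$, with $GL_1^2$ normalised so that its two factors scale the two irreducible summands independently, $(a,b)\cdot(x,v)=(ax,bv)$; this is the normalisation making the characters come out as stated. For $f_1$, viewed as a polynomial on $V$ through the projection $V\to X$ (so $f_1(x,v)=(-\tfrac14 f)(x)$), Proposition \ref{P2.1}(2) says $f$ is a relative invariant of $(GL_6,\wedge^3V_6)$ with character $(\det g)^2$, hence $f$, and so $f_1$, is an absolute $Sp_6$-invariant since $\det g=1$ on $Sp_6$. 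As $f_1$ is homogeneous of degree $4$ in the $X$-variable and independent of $v$, the element $(a,b)$ multiplies it by $a^4$; thus $f_1$ is a relative invariant with $\chi_1((g,a,b))=a^4$, its irreducibility on $X$ (hence on $V$, since $f_1$ does not involve $v$) being recorded in \cite{IJ}, \cite{PS}.

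For $f_2(\tilde{x})=-\tfrac12\,v^{t}M_J\,\phi(x)\,v$, where $x$ denotes the $X$-component of $\tilde{x}=(x,v)$, I would argue as follows. The decomposition $V=X\oplus V_6$ is $Sp_6$-equivariant, so for $g\in Sp_6$ the $X$- and $V_6$-components of $g\tilde{x}$ are $gx$ and $gv$, and Proposition \ref{P2.1}(1) gives $\phi(gx)=\det(g)\,g\phi(x)g^{-1}=g\phi(x)g^{-1}$. One also needs $g^{t}M_J g=M_J$ for $g\in Sp_6$, which follows from the defining identity $gM_Jg^{t}=M_J$ upon inverting both sides and using $M_J^{-1}=-M_J$. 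Therefore
\[
f_2(g\tilde{x})=-\tfrac12\,(gv)^{t}M_J\,\bigl(g\phi(x)g^{-1}\bigr)(gv)=-\tfrac12\,v^{t}\bigl(g^{t}M_J g\bigr)\phi(x)\,v=f_2(\tilde{x}),
\]
so $f_2$ is $Sp_6$-invariant, and it is not identically zero (evaluate at one convenient point). Under $(a,b)$ the factor $\phi(x)$ picks up $a^2$, being quadratic in $x$, and the two copies of $v$ pick up $b^2$, so $f_2\mapsto a^2b^2 f_2$ and $\chi_2((g,a,b))=a^2b^2$.

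It remains to identify $f_1,f_2$ with the basic relative invariants. Any relative invariant of a PV is a scalar times a monomial in the irreducible basic relative invariants, its character being the corresponding monomial in the basic characters. By \cite[Proposition 5.2]{SKa} the PV $(Sp_6\times GL_1^2,V)$ has exactly two basic relative invariants, both of degree $4$; so a degree-$4$ relative invariant---in particular each of $f_1,f_2$---is a scalar multiple of a single basic one. Since $f_1$ and $f_2$ carry the distinct characters $a^4$ and $a^2b^2$, together they realise both basic relative invariants (up to scalars); hence there are exactly two, both irreducible of degree $4$, with the stated characters. The step requiring the most care is the $Sp_6$-invariance of $f_2$: one must substitute the transformation law $\phi(gx)=\det(g)g\phi(x)g^{-1}$ into the quadratic form correctly and use the symplectic relation in the form $g^{t}M_J g=M_J$, keeping track of $\det g=1$. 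Proving the irreducibility of $f_2$ directly---say by bounding from below the generic rank of the bilinear form $v\mapsto v^{t}M_J\phi(x)v$---would be substantially more laborious, so I would instead deduce it from the cited classification.
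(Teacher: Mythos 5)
Your argument is correct and follows the same route as the paper: the paper deduces this proposition by citing \cite{SKa} (Proposition 5.2) after asserting that the $Sp_6$-invariance of $f_1$ and $f_2$ is a direct check via Proposition \ref{P2.1}, and you simply carry out that direct check explicitly (including the computation of the $GL_1^2$-characters) and then invoke \cite{SKa} for the count of basic relative invariants and irreducibility. Your spelled-out verification of $f_2(g\tilde{x})=f_2(\tilde{x})$, using $\phi(gx)=g\phi(x)g^{-1}$ together with $g^{t}M_Jg=M_J$ for $g\in Sp_6$, is exactly the computation the paper leaves to the reader.
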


We will discuss the PV $(Sp_{6}\times GL_{1}^{2},V)$ in detail later in Section \ref{5}, and prove that the set of all points in $V$ where both $f_{1}$ and $f_{2}$ are non-zero forms a single orbit.

\vspace{1 mm}

\begin{remark}
    The degree $4$ polynomial $f$ in Proposition \ref{P2.1} is the same invariant defined on the structurable algebra $Z$ (see \cite{AB}, \cite{AF}, \cite{LM}), if we use the identification of $V$ with $Z$ in the previous subsection. Even $f$ is an absolute invariant under the induced action of $SL_{6}\subset GL_{6}$ which we can see from Proposition \ref{P2.1} (see \cite{LM} also). For more details on $f$, we refer to (\cite{SKa}) and (\cite{BGL}).
\end{remark}

\vspace{1 mm}

We will write $V^{ss}=\{\tilde{x}=(x,v)\in V=X\oplus V_{6}:f_{1}(x)\neq 0, f_{2}(\tilde{x})\neq 0\}\subset V$. This is a $Sp_{6}$-invariant subset of $V$, as both $f_{1}$ and $f_{2}$ are $Sp_{6}$-invariant.

\section{Main theorems}\label{4} In this section, we compute the orbit space of the Zariski-dense subset $V^{ss}\subset V=\wedge^{3}V_{6}$, under the $Sp_{6}$-action. In (\cite{IJ}), the orbit decompositions of $X$ have been discussed in detail, and in (\cite{PS}) we have seen that the orbit space gives us a parametrization of all composition algebras defined over the underlying field $k$. Here we will see a description of all maximal $4$-flags of composition algebras, in terms of the $Sp_{6}(k)$-orbits in $V^{ss}(k)\subset V(k)$.

\vspace{2 mm}

We have $V^{ss}=\{\tilde{x}=(x,v)\in V=X\oplus V_{6}: f_{1}(x)\neq 0, f_{2}(\tilde{x})\neq 0\}$. From now on, we will write $G$ for the group $Sp_{6}$. We start with the following Proposition and its Corollary from (\cite{SJP}, Chapter I, Section $5$, Proposition $36$ and Corollary $1$), which is required to proceed further.

\begin{proposition}
    Let $H_{1}$ and $G_{1}$ be two algebraic groups defined over the field $k$ and $H_{1}\subset G_{1}$. Then the sequence of cohomology sets \[1\rightarrow H^{0}(k,H_{1})\rightarrow H^{0}(k,G_{1})\rightarrow H^{0}(k,G_{1}/H_{1})\rightarrow H^{1}(k, H_{1})\rightarrow H^{1}(k,G_{1})\] is exact.
\end{proposition}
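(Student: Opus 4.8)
The plan is to reduce the statement to the standard cocycle formalism of non-abelian Galois cohomology. Write $k_s$ for a separable closure of $k$ and $\Gamma=\mathrm{Gal}(k_s/k)$, so that $H^{0}(k,U)=U(k)=U(k_s)^{\Gamma}$ for each of the groups (and for the quotient variety) $U$ involved, and $H^{1}(k,U)$ is the usual pointed set of continuous $1$-cocycles $\sigma\mapsto a_{\sigma}\in U(k_s)$ modulo coboundaries. The first, and really the only geometric, step is to observe that the quotient morphism $\pi\colon G_{1}\to G_{1}/H_{1}$ induces a \emph{surjection} $\pi\colon G_{1}(k_s)\to (G_{1}/H_{1})(k_s)$: since $\pi$ is smooth and surjective and $k_s$ is separably closed, $\pi$ admits a section through any prescribed $k_s$-point. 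This yields an exact sequence of pointed $\Gamma$-sets
\[ 1\longrightarrow H_{1}(k_s)\longrightarrow G_{1}(k_s)\xrightarrow{\ \pi\ } (G_{1}/H_{1})(k_s)\longrightarrow 1, \]
with $H_{1}(k_s)$ acting on $G_{1}(k_s)$ by right translation.

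Next I would construct the connecting map $\delta\colon (G_{1}/H_{1})(k)\to H^{1}(k,H_{1})$. Given $\bar{c}\in (G_{1}/H_{1})(k)$, choose $b\in G_{1}(k_s)$ with $\pi(b)=\bar{c}$; since $\bar{c}$ is $\Gamma$-fixed, $\pi(b^{-1}\,{}^{\sigma}b)$ is the base point for every $\sigma$, hence $a_{\sigma}:=b^{-1}\,{}^{\sigma}b\in H_{1}(k_s)$. A short computation gives the cocycle identity $a_{\sigma\tau}=a_{\sigma}\,{}^{\sigma}a_{\tau}$, and replacing $b$ by $bh$ with $h\in H_{1}(k_s)$ changes $(a_{\sigma})$ by a coboundary, so $\delta(\bar{c}):=[(a_{\sigma})]$ is well defined.

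The core of the argument is then a routine cocycle chase proving exactness, in the pointed-set sense (preimage of the distinguished point equals image of the previous map), at each of the four junctions. Exactness at $H^{0}(k,H_{1})$ and $H^{0}(k,G_{1})$ is immediate after identifying the fibre of $\pi$ over the base point with $H_{1}$, which gives $\ker\big(G_{1}(k)\to (G_{1}/H_{1})(k)\big)=G_{1}(k)\cap H_{1}(k_s)=H_{1}(k)$. For $H^{0}(k,G_{1}/H_{1})$: if $\bar{c}=\pi(g)$ with $g\in G_{1}(k)$ one takes $b=g$, so $a_{\sigma}=1$ and $\delta(\bar{c})$ is trivial; conversely if $\delta(\bar{c})$ is trivial then $a_{\sigma}=h^{-1}\,{}^{\sigma}h$ for some $h\in H_{1}(k_s)$, whence ${}^{\sigma}(bh^{-1})=bh^{-1}$, so $bh^{-1}\in G_{1}(k)$ maps to $\bar{c}$. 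For $H^{1}(k,H_{1})$: a class $[(a_{\sigma})]$ becomes trivial in $H^{1}(k,G_{1})$ precisely when $a_{\sigma}=b^{-1}\,{}^{\sigma}b$ for some $b\in G_{1}(k_s)$, and that is exactly the condition that $\pi(b)$ lies in $(G_{1}/H_{1})(k)$ with $\delta(\pi(b))=[(a_{\sigma})]$.

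The step I expect to be the only real obstacle is the surjectivity of $\pi$ on $k_s$-points; once that is in hand, everything else is purely formal manipulation of $1$-cocycles. In the setting needed for this paper all the groups in play are smooth (subgroups of $Sp_{6}$ over a field of characteristic $\neq 2$), so the quotient morphism is smooth and this surjectivity follows from the lifting property of smooth morphisms over a separably closed field; alternatively one could work with fppf cohomology throughout and drop the smoothness hypothesis. Since the result is entirely standard, in the actual write-up I would simply invoke (\cite{SJP}, Chapter I, \S5, Proposition $36$ and Corollary $1$) rather than reproduce the chase in full.
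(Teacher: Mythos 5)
Your argument is correct and is precisely the standard cocycle-chase proof from the reference the paper itself invokes (Serre, \emph{Galois Cohomology}, Chapter~I, \S5, Proposition~36); the paper states the result without proof, citing that source, exactly as you indicate you would do in the final write-up. The only substantive point, which you already flag, is that the surjectivity of $G_1(k_s)\to(G_1/H_1)(k_s)$ rests on smoothness of the groups involved, and this holds in all the applications in the paper.
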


\begin{corollary}\label{c1}
    The kernel of $H^{1}(k,H_{1})\rightarrow H^{1}(k,G_{1})$ may be identified with the quotient space of $H^{0}(k,G_{1}/H_{1})=(G_{1}/H_{1})(k)$ by the action of the group $H^{0}(k,G_{1})=G_{1}(k)$.
\end{corollary}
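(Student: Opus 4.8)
\emph{Proof proposal.} The plan is to read the identification off the exact sequence of the preceding Proposition, using only the explicit formula for the connecting map. Write $\Gamma=\mathrm{Gal}(\overline{k}/k)$ and recall that $H^{0}(k,H_{1})=H_{1}(k)$, $H^{0}(k,G_{1})=G_{1}(k)$ and $H^{0}(k,G_{1}/H_{1})=(G_{1}/H_{1})(k)$, the last of which carries a natural left action of $G_{1}(k)$ by translation. Let $\delta\colon(G_{1}/H_{1})(k)\to H^{1}(k,H_{1})$ and $\alpha\colon H^{1}(k,H_{1})\to H^{1}(k,G_{1})$ denote the last two maps of the sequence. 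By exactness at $H^{1}(k,H_{1})$ one has $\ker(\alpha)=\mathrm{im}(\delta)$, so the whole statement reduces to showing that the fibres of $\delta$ are precisely the $G_{1}(k)$-orbits on $(G_{1}/H_{1})(k)$.

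First I would record the formula for $\delta$: given $\bar g\in(G_{1}/H_{1})(k)$, choose a lift $g\in G_{1}(\overline{k})$; since $\bar g$ is $\Gamma$-fixed we have $g^{-1}\sigma(g)\in H_{1}(\overline{k})$ for every $\sigma\in\Gamma$, the map $\sigma\mapsto g^{-1}\sigma(g)$ is a $1$-cocycle, and $\delta(\bar g)$ is its class. Then two short checks finish the argument. \emph{(i) $\delta$ is constant on $G_{1}(k)$-orbits:} for $h\in G_{1}(k)$ the element $hg$ lifts $h\cdot\bar g$, and since $\sigma(h)=h$ its associated cocycle is $(hg)^{-1}\sigma(hg)=g^{-1}\sigma(g)$, so $\delta(h\cdot\bar g)=\delta(\bar g)$. \emph{(ii) distinct orbits have distinct images:} if $\delta(\bar g)=\delta(\bar g')$ with lifts $g,g'$, then the cocycles $g^{-1}\sigma(g)$ and $g'^{-1}\sigma(g')$ are cohomologous, i.e. $g'^{-1}\sigma(g')=a^{-1}g^{-1}\sigma(g)\sigma(a)=(ga)^{-1}\sigma(ga)$ for some $a\in H_{1}(\overline{k})$ and all $\sigma$; hence $h:=ga\,g'^{-1}$ satisfies $\sigma(h)=h$, so $h\in G_{1}(k)$, and as $a\in H_{1}$ we get $h\cdot\bar g'=gaH_{1}=gH_{1}=\bar g$. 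Thus $\delta$ descends to an injection of $(G_{1}/H_{1})(k)/G_{1}(k)$, and since its image is $\mathrm{im}(\delta)=\ker(\alpha)$ this is the asserted bijection.

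The only place I expect genuine friction is step (ii): the computation is entirely with non-abelian $1$-cocycles, so one must be careful with the order of factors — the key move is to rewrite the coboundary relation in the shape $(ga)^{-1}\sigma(ga)=g'^{-1}\sigma(g')$, after which both the $\Gamma$-invariance of $ga\,g'^{-1}$ and the fact that this element sends $\bar g'$ to $\bar g$ are immediate. Everything else — exactness of the sequence, the cocycle description of $\delta$, and the equality $G_{1}(k)=G_{1}(\overline{k})^{\Gamma}$ — is standard and may be quoted from the preceding Proposition and from the general theory of non-abelian Galois cohomology.
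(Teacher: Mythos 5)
Your proof is correct, and it is the standard argument from non-abelian Galois cohomology. Note, however, that the paper does not prove this Corollary at all: it quotes it verbatim (together with the preceding Proposition) from Serre's \emph{Galois Cohomology}, Chapter~I, \S 5, Proposition~36 and Corollary~1. Your cocycle argument — defining $\delta(\bar g)$ by $\sigma\mapsto g^{-1}\sigma(g)$ for a lift $g\in G_{1}(\overline{k})$, checking invariance under left translation by $G_{1}(k)$, and running the cohomologous-cocycle computation $(ga)^{-1}\sigma(ga)=g'^{-1}\sigma(g')$ to produce the $\Gamma$-fixed element $h=ga\,g'^{-1}\in G_{1}(k)$ carrying $\bar g'$ to $\bar g$ — is precisely the proof Serre gives, so you have in effect reconstructed the content of the reference rather than diverged from the paper.
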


Let $A$, $A_{1}$ be two algebras over the field $k$, and $B\subset A$, $B_{1}\subset A_{1}$ be their subalgebras, respectively. We say that the $2$-flags of algebras $(B\subset A)$ and $(B_{1}\subset A_{1})$ are isomorphic, if $\exists$ an algebra isomorphism $\phi :A\rightarrow A_{1}$ such that $\phi(B)=B_{1}$. Now we consider the irreducible $G$-representation $X\subset V=X\oplus V_{6}$ and take $X^{ss}=\{x\in X:f_{1}(x)\neq 0\}\subset X$. Then each point in the orbit space $X^{ss}_{k}/G_{k}$ represents an isomorphism class of a unique octonion algebra $C$ along with a unique quadratic subalgebra $K\subset C$, and we get all such $2$-flags of composition algebras $(K\subset C)$ over the field $k$ in this way; which we prove in the following result (see \cite{PS} also).

\begin{theorem}\label{KC}
     There is a surjection $f^{ss}:X^{ss}_{k}/G_{k}\rightarrow \{$Isomorphism classes of the $2$-flags of composition algebras $(K\subset C): \text{dim}(K)=2, \text{dim}(C)=8\}$.
    
\end{theorem}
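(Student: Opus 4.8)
The plan is to connect the orbit space $X^{ss}_k/G_k$ to a Galois cohomology computation via the stabilizer of a suitably chosen base point, and then to identify that cohomology set with isomorphism classes of $2$-flags $(K\subset C)$. First I would recall from (\cite{PS}) that over $\overline{k}$ the set $X^{ss}$ is a single $G(\overline{k})$-orbit, so I fix a base point $x_0\in X^{ss}(k)$ — concretely the one coming from the $SL_3$-invariant alternating trilinear form on the off-diagonal $V_6\subset Zorn(k)$, so that $x_0$ ``is'' the split flag $(\overline{k}\times\overline{k}\subset Zorn)$ — and let $H = \mathrm{Stab}_{G}(x_0)$. By the standard principle (Corollary \ref{c1} applied with $G_1 = G$, $H_1 = H$, using that $V^{ss}$ being a single orbit over $\overline{k}$ means $G/H \cong X^{ss}$ as $k$-varieties), the pointed set $X^{ss}(k)/G(k)$ is the kernel of $H^1(k,H)\to H^1(k,G)$. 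Since $G = Sp_6$ is special (in the sense of Serre), $H^1(k,Sp_6) = 1$, so $X^{ss}(k)/G(k) \cong H^1(k,H)$ as a pointed set.

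The second step is to identify $H$ and then $H^1(k,H)$. From the discussion preceding the theorem (and from \cite{IJ}, \cite{PS}, \cite{JN}, \cite{TML}), the stabilizer of the base point $x_0$ in $Sp_6$ should be $SL_1(Q)$ where $Q = M_2(k)$ is the split quaternion algebra sitting inside $Zorn(k)$ and containing the quadratic subalgebra $K = k\times k$; equivalently, via the isomorphism $SL_1(Q)\simeq SU(h_2)$ recalled in the preliminaries, $H$ is a group of type $A_1$. Here I would use the fact, stated in Section \ref{4} of the excerpt, that $Sp_6(k)$ contains the automorphism group of an octonion algebra fixing a quaternion subalgebra, i.e. $SL_1(Q)\subset \mathrm{Aut}(C/Q)$. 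Then $H^1(k,SL_1(Q_0))$ for $Q_0$ the split quaternion algebra classifies pairs: by the exact sequence $1\to SL_1(Q_0)\to Q_0^\times \xrightarrow{N} \mathbb{G}_m\to 1$ combined with twisting, $H^1(k,SL_1)$ over all inner forms packages the data of a quaternion algebra $Q$ (from $H^1(k,PGL_2)$-type data appearing after twisting) together with a norm class in $k^\times/N(Q^\times)$, which by the Cayley–Dickson description in the preliminaries is exactly an octonion $C$ with a distinguished quaternion subalgebra $Q$ — and $Q$ in turn carries its distinguished quadratic subalgebra $K$, giving the $2$-flag $(K\subset C)$. Tracking the twist: a cocycle in $H^1(k,H)$ twists the split flag $(k\times k\subset M_2\subset Zorn)$ into a $k$-form, and forms of this flag are precisely the $2$-flags $(K\subset C)$ with $\dim K = 2$, $\dim C = 8$ (using that every such $C$ arises by Cayley–Dickson doubling and every quadratic $K$ embeds in a unique-up-to-conjugacy way when $C$ is fixed, or rather that the flag data is exactly what the cocycle records).

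For surjectivity — which is all the theorem claims — I would argue directly: given any $2$-flag $(K\subset C)$ over $k$ with $\dim K = 2$, $\dim C = 8$, it becomes isomorphic over $\overline{k}$ to the split flag (there is a unique octonion over $\overline{k}$ and a unique $2$-dimensional étale subalgebra up to the automorphism group action), so it defines a class in $H^1(k,H)$, hence a $G(k)$-orbit in $X^{ss}(k)$; unwinding, that orbit's associated algebra-with-subalgebra is $(K\subset C)$. Concretely I would exhibit the map $f^{ss}$ by sending $x\in X^{ss}(k)$ to the octonion algebra reconstructed from $x$ as in \cite{PS} (the norm $N_C$ being built from $f_1$ and the contraction data) together with the quadratic subalgebra $K$ read off from the $V_6$-component structure, and check this is $G(k)$-invariant and well-defined on orbits. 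The main obstacle I anticipate is the precise identification of the stabilizer $H$ as $SL_1(M_2(k))$ and, more delicately, verifying that $H^1(k,H)$ really sees the \emph{flag} $(K\subset C)$ and not merely the pair $(Q\subset C)$ or the algebra $C$ alone — i.e. pinning down exactly which quadratic subalgebra $K$ the cocycle distinguishes and confirming that distinct flags give distinct orbits is the subtle point; since the theorem only asserts a surjection, one can sidestep injectivity, but one still must show the reconstruction of $(K\subset C)$ from an orbit is canonical. A secondary technical point is handling the characteristic-$3$ case, where the structurable-algebra description of $Z$ is unavailable, but the preliminaries note that the vector-space/$Sp_6$-representation identification still holds, so the cohomological argument goes through unchanged.
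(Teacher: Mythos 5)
Your proposal has a fundamental structural error, and it stems from conflating this theorem about $X^{ss}$ (the $14$-dimensional irreducible piece, parametrizing $2$-flags $K\subset C$) with the later Theorem \ref{CD} about $V^{ss}$ (the full $20$-dimensional space, parametrizing $4$-flags).

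\textbf{The stabilizer is wrong.} You take a base point $x_0\in X^{ss}(k)$ and claim $\mathrm{Stab}_{Sp_6}(x_0)\simeq SL_1(Q)$, a form of $SL_2$ of type $A_1$ and dimension $3$. But the actual stabilizer of a point of $X^{ss}$ in $Sp_6$ is a form of $SL_3$, namely $SU(h)$ for a rank-$3$ trivial-discriminant $K/k$-hermitian form $h$ --- type $A_2$, dimension $8$. (Over $\overline{k}$ the stabilizer of $-e_1e_2e_3-2\sqrt{-i}e_4e_5e_6$ is visibly the block $SL_3$ embedded as $\mathrm{diag}(A,(A^t)^{-1})$.) The group $SL_1(Q)\simeq SU(h_2)$ with $h_2$ of rank $2$ is the stabilizer of a point of $V^{ss}$, i.e. when one adjoins the $V_6$-component; it has no business appearing here. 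This wrong stabilizer propagates: $H^1(k,SL_1(Q))$ is the wrong cohomology set, and the downstream attempt to recover a quaternion $Q$ plus a ``distinguished'' quadratic subalgebra $K\subset Q$ is not salvageable --- a quaternion algebra has no canonical quadratic subalgebra, so the data $(Q\subset C)$ simply does not determine a flag $(K\subset C)$.

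\textbf{The orbit premise is false.} You assert that $X^{ss}$ is a single $G(\overline{k})$-orbit with $G=Sp_6$ and apply Corollary \ref{c1} once, globally, to get $X^{ss}(k)/G(k)\cong H^1(k,H)$. But $f_1$ is an \emph{absolute} $Sp_6$-invariant, so $Sp_6$-orbits cannot cross its level sets; $X^{ss}$ is a disjoint union of the fibers $f_1^{-1}\{i\}$ over $i\in\overline{k}^\times$, each its own orbit, and is in no way homogeneous under $Sp_6$. (It is a single orbit under $Sp_6\times GL_1$, but that is not the group in the theorem.) The correct use of Corollary \ref{c1}, which the paper makes, is fiber by fiber: for each $i\in k^\times$ one shows $f_1^{-1}\{i\}$ becomes a single orbit over $k(\sqrt{-i})$, and the $k$-rational orbits in that single fiber biject with $H^1(k,SU(h))$. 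This is also exactly how the quadratic algebra $K$ is recovered: $K=k(\sqrt{-f_1(x)})$ is read off from the relative-invariant value, not from anything encoded in the cocycle. Without this fiberwise structure you have no mechanism for distinguishing the quadratic algebra at all. The subsequent identification of $H^1(k,SU(h))$ with isometry classes of rank-$3$ trivial-discriminant hermitian forms, and of those with octonions containing $K$ (Jacobson), is the step that produces the second member $C$ of the flag; surjectivity then reduces to writing down an explicit $x$ with the prescribed $f_1$-value and diagonal data $(y_1,y_2,y_3)$. As written, your argument would at best produce a bijection $X^{ss}(k)/Sp_6(k)\to H^1(k,SL_1(Q_0))\cong k^\times/N(Q_0^\times)$, which is the wrong answer by a wide margin.

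One smaller point: the explicit trilinear-form base point you pick from the off-diagonal copy of $V_6\subset Zorn(k)$ is only well-defined up to the fiber of $f_1$ it lives on, so even the ``split flag'' base point is not a single point but a $GL_1$-orbit of points with different $f_1$-values --- another symptom of the non-homogeneity you are assuming away.
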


\begin{proof}
    Let $x\in X^{ss}_{k}$ and $f_{1}(x)=i\in k^{\times}$. We first assume that $-i\notin k^{\times 2} $. Then $x$ is $G_{k}$-equivalent to an element of the form \[y=-e_{1}e_{2}e_{3}-y_{0}e_{4}e_{5}e_{6}+y_{1}e_{1}e_{5}e_{6}+y_{2}e_{4}e_{2}e_{6}+y_{3}e_{4}e_{5}e_{3},\] where $y_{0},y_{1},y_{2},y_{3}\in k$ and $f_{1}(y)=y_{1}y_{2}y_{3}-(1/4)y_{0}^{2}=i$ (see \cite{IJ}, page 1022, 1028). Again, over $k(\sqrt{-i})$ the element $x$ is equivalent to the element \[x_{1}=-e_{1}e_{2}e_{3}-2(\sqrt{-i})e_{4}e_{5}e_{6},\] and we can choose an element $g\in G({k(\sqrt{-i})})$ such that $gx_{1}=y$ (see \cite{IJ}, page 1023). So, over $k(\sqrt{-i})$, $f_{1}^{-1}\{i\}$ is a single orbit. Let $H\subset Sp_{6}$ be the stabilizer of the element $x_{1}\in X^{ss}$, and as we can see in (\cite{IJ}, page 1024) \[H = \biggl\{\left(\begin{array}{cc}
A & 0 \\
0 & (A^{t})^{-1}
\end{array}\right):A\in SL_{3}\biggr\}\simeq SL_{3}.\] Then the stabilizer of $y$ is isomorphic to $gHg^{-1}\simeq SL_{3}$ over $k(\sqrt{-i})$ and the $k$-rational points in the stabilizer of $y$ are given by the relation \[\sigma \biggl( g\left(\begin{array}{cc}
A & 0 \\
0 & (A^{t})^{-1}
\end{array}\right)g^{-1}\biggr)=g\left(\begin{array}{cc}
A & 0 \\
0 & (A^{t})^{-1}
\end{array}\right)g^{-1}, A\in SL_{3},\] where $\sigma$ is the non-trivial involution on $k(\sqrt{-i})$. As we can compute the element $g$ (see \cite{IJ}, page 1023), a straightforward calculation shows that $\text{Stab}(x)\simeq \text{Stab(y)}$ is isomorphic to the special unitary group $SU(h)$ over $k$, where $h$ is the non-degenerate ternary $k(\sqrt{-i})/k$-hermitian form represented by $\text{diag}(y_{1},y_{2},y_{3})$. The hermitian form $h$ has trivial discriminant as $y_{1}y_{2}y_{3}=(1/4)y_{0}^{2}+i=N((1/2)y_{0}+\sqrt{-i})$, where $N$ is the $k(\sqrt{-i})/k$-norm. Hence from Corollary \ref{c1} we get \[ f_{1}^{-1}\{i\}(k)/G_{k}\longleftrightarrow H^{1}(k,SU(h)), \] as $H^{1}(k,Sp_{6})$ is trivial. Now, $H^{1}(k,SU(h)) $ has one to one correspondence with the set of all isometry classes of trivial discriminant non-degenerate ternary $k(\sqrt{-i})/k$-hermitian forms (see \cite{KMRT}, Chapter VII, Example $(29.19)$), which classify the isomorphism classes of octonion algebras over the field $k$ containing $k(\sqrt{-i})$ as subalgebra (see \cite{TML}, Theorem $2.2$). So we define \[f^{ss}(\mathcal{O}(x))=(k(\sqrt{-i})\subset C),\]
where $f_{1}(x)=i\in k^{\times}, -i \notin k^{\times 2}$, $\mathcal{O}(x)$ denotes the $G_{k}$-orbit of $x\in X^{ss}_{k}$ and $C$ is the octonion determined by the $k(\sqrt{-i})/k$-hermitian form represented by $\text{diag}(y_{1},y_{2},y_{3})$ (see \cite{JN}, \cite{TML}).

\vspace{2 mm}

 If $ f_{1}(x)=i$ and $-i\in k^{\times 2}$, then $f_{1}^{-1}\{i\}(k)$ is a single $G_{k}$-orbit and $k(\sqrt{-i})= k[t]/ \langle t^{2}+i\rangle \simeq k\times k$. There is a unique octonion algebra containing $k\times k$ as a subalgebra, namely $Zorn(k)$. So, in this case we define \[ f^{ss}(\mathcal{O}(x))=(k\times k, Zorn(k)),\] and this makes $f^{ss}$ well-defined on $X^{ss}_{k}/G_{k}$.

 \vspace{2 mm}

 In order to prove the surjectivity of $f^{ss}$, let $(K\subset C)$ be a $2$-flag as described earlier. Then $K\simeq k(\sqrt{-i})$, for some $i\in k^{\times}$ (as char$(k)\neq 2$), and we get a unique trivial discriminant non-degenerate ternary $k(\sqrt{-i})/k$-hermitian form $h$ which corresponds to $C$ (see \cite{JN}, \cite{TML}). If the diagonal matrix $\text{diag}(y_{1},y_{2},y_{3})$ represents $h$, and we choose $y_{0}\in k$ satisfying $y_{1}y_{2}y_{3}=(1/4)y_{0}^{2}+i$, then $f^{ss}(\mathcal{O}(x))=(K\subset C)$, where \[x=-e_{1}e_{2}e_{3}-y_{0}e_{4}e_{5}e_{6}+y_{1}e_{1}e_{5}e_{6}+y_{2}e_{4}e_{2}e_{6}+y_{3}e_{4}e_{5}e_{3}\in X^{ss}_{k}.\] Hence we get the required surjection.

\end{proof}

\begin{remark}
    If $x\in X^{ss}_{k}$ with $f_{1}(x)=i\in k^{\times}$, $-i\notin k^{\times 2}$ and $h$ is the trivial discriminant $k(\sqrt{-i})/k$-hermitian form of rank $3$ such that $\text{Stab}(x)\simeq SU(h)$, then $\text{Aut}(C/k(\sqrt{-i}))=SU(h)$, where $f^{ss}(\mathcal{O}(x))=(k(\sqrt{-i})\subset C)$ for $\mathcal{O}(x)\in X^{ss}_{k}/G_{k}$ (see \cite{JN}, Theorem $3$). If $-i\in k^{\times 2}$, then $f^{ss}(\mathcal{O}(x))=( k\times k \subset Zorn(k))$ and $\text{Stab}(x)\simeq SL_{3}=\text{Aut}(Zorn(k)/k\times k)$.
\end{remark}

\vspace{2 mm}

\begin{remark}
    Let $(K\subset C)$ be a $2$-flag of an octonion $C$ and it's quadratic subalgebra $K=k(\sqrt{-i})$, $-i\in k^{\times}-k^{\times 2}$. Then $(f^{ss})^{-1}\{(K\subset C)\}$ is the set of all orbits $\mathcal{O}(x)\in X^{ss}_{k}/G_{k}$ with $f_{1}(x)=ia^{2}$ for some $a\in k^{\times}$, where \[x=-e_{1}e_{2}e_{3}-y_{0}e_{4}e_{5}e_{6}+y_{1}e_{1}e_{5}e_{6}+y_{2}e_{4}e_{2}e_{6}+y_{3}e_{4}e_{5}e_{3}\in X^{ss}_{k},\] and the trivial discriminant ternary $K/k$-hermitian form $h$ represented by the non-singular diagonal matrix $\text{diag}(y_{1},y_{2},y_{3})$ is such that $ N_{C}((a,y))=N(a)+h(y,y), \text{ for } a\in K,y\in K^{\perp}\subset C$ and $N$ is the norm on $K$ (see Section \ref{2}). If $-i\in k^{\times 2}$, i.e., $K\simeq k\times k$, then $(f^{ss})^{-1}\{(K\subset C)\}$ is the set of all orbits $\mathcal{O}(x)\in X^{ss}_{k}/G_{k}$ with $-f_{1}(x)\in k^{\times 2}$.
\end{remark}

\vspace{2 mm}

\begin{remark}

The action of $G$ on $V=X\oplus V_{6}$ is given by $g.(x,v)=(gx,gv)$ for $g\in G$ and $x\in X,v\in V_{6}$. So, if $\tilde{x}=(x,v)\in V^{ss}_{k}$ and $f_{1}(x)=i\in k^{\times}$, then for $g\in G_{k}$ we have $g.\tilde{x}=(g.x,g.v)$ and the $X$-component $g.x$ of $g.\tilde{x}$ represents the same $2$-flag $(K\subset C)$ as the $X$-component of $\tilde{x}$, according to the above result.
\end{remark}

\vspace{2 mm}

Before proceeding further, we first discuss some results on the other $G$-invariant polynomial $f_{2}$ defined on $V$ in Section \ref{3}. As we have already mentioned, any $x\in X^{ss}_{k}$ with $f_{1}(x)=i\in k^{\times}$ can be reduced to an element of the form $-e_{1}e_{2}e_{3}-y_{0}e_{4}e_{5}e_{6}$ over $k(\sqrt{-i})$, where $y_{0}=2\sqrt{-i}$ (see \cite{IJ} for details). Since $f_{2}$ is $G$-invariant, it suffices to compute $f_{2}$ for the points of the form $(-e_{1}e_{2}e_{3}-y_{0}e_{4}e_{5}e_{6},v)\in V^{ss}$ over $k(\sqrt{-i})$.

\begin{proposition}\label{inv2}
 If $\tilde{x}=(-e_{1}e_{2}e_{3}-y_{0}e_{4}e_{5}e_{6}, v)\in V^{ss}$, we have $f_{2}(\tilde{x})=-y_{0}(v_{1}v_{4}+v_{2}v_{5}+v_{3}v_{6}),$ where $v=(v_{1},..,v_{6})^{t}\in V_{6}$.
\end{proposition}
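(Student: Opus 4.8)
\emph{Proof sketch.} The plan is to evaluate the defining formula $f_{2}(\tilde{x})=-\frac{1}{2}\,v^{t}M_{J}\,\phi(x)\,v$ directly at $x=-e_{1}e_{2}e_{3}-y_{0}e_{4}e_{5}e_{6}$. This $x$ really does lie in $X=\ker\psi$ (all the values $J(e_{i},e_{j})$ with $i,j\in\{1,2,3\}$ or $i,j\in\{4,5,6\}$ vanish), so the expression $\phi(x)$ makes sense, and since $f_{2}$ is $G$-invariant it suffices to treat points of this shape. The bulk of the work is the computation of the $6\times 6$ matrix $\phi(x)=(\phi_{ij}(x))$ from the relation $x\wedge\frac{\partial}{\partial e_{i}}(x)\wedge e_{j}=\phi_{ij}(x)\,\tau$.

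First I would record the partials: for $m=1,\dots,6$ the $2$-forms $\frac{\partial}{\partial e_{m}}(x)$ are $-e_{2}e_{3},\ e_{1}e_{3},\ -e_{1}e_{2},\ -y_{0}e_{5}e_{6},\ y_{0}e_{4}e_{6},\ -y_{0}e_{4}e_{5}$ respectively. For each fixed $i$, the term of $x$ lying in the same coordinate block as $e_{i}$ is killed upon wedging with $\frac{\partial}{\partial e_{i}}(x)$ (a repeated basis vector appears), so $x\wedge\frac{\partial}{\partial e_{i}}(x)$ is a single decomposable $5$-form whose index set is exactly $\{1,\dots,6\}\setminus\{i\}$. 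Wedging with $e_{j}$ then vanishes unless $j=i$, which already shows $\phi(x)$ is diagonal; reordering each surviving monomial into $e_{1}\cdots e_{6}$ and reading off the sign gives $\phi(x)=\mathrm{diag}(-y_{0},-y_{0},-y_{0},y_{0},y_{0},y_{0})$. (As a check, $\phi(x)^{2}=y_{0}^{2}I_{6}$, so $f(x)=y_{0}^{2}$ and $f_{1}(x)=-\frac{1}{4}y_{0}^{2}$, which agrees with the normalization used earlier for $y_{0}=2\sqrt{-i}$.)

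It remains to compute $M_{J}\,\phi(x)=y_{0}\begin{pmatrix}0&I_{3}\\ I_{3}&0\end{pmatrix}$, so that $v^{t}M_{J}\,\phi(x)\,v=2y_{0}(v_{1}v_{4}+v_{2}v_{5}+v_{3}v_{6})$ for $v=(v_{1},\dots,v_{6})^{t}$, and hence $f_{2}(\tilde{x})=-y_{0}(v_{1}v_{4}+v_{2}v_{5}+v_{3}v_{6})$, as claimed. The one place that needs a bit of care is the sign bookkeeping in the reordering step; this is harmless once one notes that the six permutations involved split into a small number of disjoint cycles (e.g.\ a $6$-cycle for $i=1$, a transposition times a $4$-cycle for $i=2$), so their signs are immediate. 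Beyond that the argument is a single finite explicit computation, with no real obstacle.
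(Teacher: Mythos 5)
Your argument is correct and is essentially the same as the paper's, which simply states that one ``can directly compute'' $M_J\phi(x)=\bigl(\begin{smallmatrix}0&y_0 I_3\\ y_0 I_3&0\end{smallmatrix}\bigr)$ and reads off the result; you supply the explicit evaluation of $\phi(x)$ from its defining identity $x\wedge\frac{\partial}{\partial e_i}(x)\wedge e_j=\phi_{ij}(x)\tau$. I checked your intermediate matrix $\phi(x)=\mathrm{diag}(-y_0,-y_0,-y_0,y_0,y_0,y_0)$: the off-diagonal vanishing (each $x\wedge\frac{\partial}{\partial e_i}(x)$ is a single decomposable $5$-form missing only $e_i$) and the diagonal signs both come out as you say, and multiplying by $M_J=\bigl(\begin{smallmatrix}0&I_3\\-I_3&0\end{smallmatrix}\bigr)$ gives the paper's matrix, hence $v^t M_J\phi(x)v=2y_0(v_1v_4+v_2v_5+v_3v_6)$ and the stated formula for $f_2$. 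Your sanity check $f_1(x)=-\tfrac14 y_0^2=i$ for $y_0=2\sqrt{-i}$ is also consistent with the paper's normalization.
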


\begin{proof}
We have $f_{2}(\tilde{x})=(-\frac{1}{2})v^{t}M_{J}\phi(x) v$ (see Section \ref{3}). For $x=-e_{1}e_{2}e_{3}-y_{0}e_{4}e_{5}e_{6}$ we can directly compute \[M_{J}\phi(x)= \left(\begin{array}{cc}
0 & y_{0}I_{3} \\
y_{0}I_{3} & 0
\end{array}\right), \] (see Section \ref{3}). Therefore, we get $f_{2}(\tilde{x})=-y_{0}(v_{1}v_{4}+v_{2}v_{5}+v_{3}v_{6})$.

\end{proof}

\vspace{1 mm}

\begin{remark}
    As $f_{2}$ is an absolute invariant of $Sp_{6}$ (see Section \ref{3}), we get $\text{Stab}(-e_{1}e_{2}e_{3}-y_{0}e_{4}e_{5}e_{6})\simeq SL_{3}\subset SO(q)$, where $q$ is the hyperbolic quadratic form defined on $V_{6}$ by \[q(v)=(v_{1}v_{4}+v_{2}v_{5}+v_{3}v_{6}), \text{ for }v=(v_{1},..,v_{6})^{t}\in V_{6}.\]
\end{remark}

\vspace{2 mm}

\begin{proposition}\label{f2}
    If $(x,v)\in V^{ss}_{k}$ with $v=(v_{1},v_{2},v_{3},v_{4},v_{5},v_{6})^{t}\in V_{6}$ and $x=-e_{1}e_{2}e_{3}-y_{0}e_{4}e_{5}e_{6}+y_{1}e_{1}e_{5}e_{6}+y_{2}e_{4}e_{2}e_{6}+y_{3}e_{4}e_{5}e_{3}\in X^{ss}_{k}$, then \[f_{2}((x,v))=-(y_{2}y_{3}v_{1}^{2}+y_{3}y_{1}v_{2}^{2}+y_{1}y_{2}v_{3}^{2}+y_{1}v_{4}^{2}+y_{2}v_{5}^{2}+y_{3}v_{6}^{2}+y_{0}v_{1}v_{4}+y_{0}v_{2}v_{5}+y_{0}v_{3}v_{6}).\]
\end{proposition}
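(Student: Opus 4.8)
This is a direct computation from the formula $f_2(\tilde x)=-\tfrac12\,v^tM_J\phi(x)v$ of Section~\ref{3}, refining Proposition~\ref{inv2} (which is the special case $y_1=y_2=y_3=0$). The plan has two steps: compute the $6\times 6$ matrix $\phi(x)$ for the element $x=-e_1e_2e_3-y_0e_4e_5e_6+y_1e_1e_5e_6+y_2e_4e_2e_6+y_3e_4e_5e_3$, and then contract it twice with $v$.

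For the first step I would apply each operator $\partial/\partial e_i$ to $x$; since $x$ is supported on only five monomials, each $\partial x/\partial e_i$ is a $2$-form with at most three terms (for instance $\partial x/\partial e_1=-e_2e_3+y_1e_5e_6$ and $\partial x/\partial e_4=-y_0e_5e_6+y_2e_2e_6-y_3e_3e_5$). Then, for each pair $(i,j)$, I would expand the degree-$6$ form $x\wedge(\partial x/\partial e_i)\wedge e_j$: it is a short sum of monomials, and only the coefficient of $\tau=e_1\wedge\dots\wedge e_6$ matters, that coefficient being $\phi_{ij}(x)$ by definition. Running this over all $36$ pairs produces $\phi(x)$; the only delicate point is the sign picked up when reordering each surviving wedge into $e_1\wedge\dots\wedge e_6$.

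For the second step I would form $M_J\phi(x)$ and expect to obtain
\[
M_J\phi(x)=\begin{pmatrix} 2\,\mathrm{diag}(y_2y_3,\,y_1y_3,\,y_1y_2) & y_0I_3 \\ y_0I_3 & 2\,\mathrm{diag}(y_1,y_2,y_3)\end{pmatrix},
\]
which generalises the purely off-diagonal block found in Proposition~\ref{inv2}; in any event only the symmetric part of $M_J\phi(x)$ is relevant, since that is all the quadratic form $v\mapsto v^tM_J\phi(x)v$ detects. Substituting into $f_2(\tilde x)=-\tfrac12\,v^tM_J\phi(x)v$ and expanding then gives the asserted identity. The explicit evaluation of $\phi(x)$ is the one genuine obstacle — bookkeeping of the $36$ entries with correct exterior-algebra signs. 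It can be trimmed a little by using that $f_2$ is $Sp_6$-invariant: the diagonal torus element $\mathrm{diag}(t_1,t_2,t_3,t_1^{-1},t_2^{-1},t_3^{-1})$ with $t_1t_2t_3=1$ carries $x$ to the same kind of element with $(y_1,y_2,y_3)$ rescaled by $(t_1^2,t_2^2,t_3^2)$ while acting on $V_6$ by coordinate rescaling, so (the claim being a polynomial identity) it suffices to verify it after normalising, say, $y_1=y_2=1$; but a base computation still remains, so this does not fully replace Step~1.

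Two checks support the formula: setting $y_1=y_2=y_3=0$ recovers $f_2(\tilde x)=-y_0(v_1v_4+v_2v_5+v_3v_6)$ of Proposition~\ref{inv2}; and completing the square in the pairs $(v_1,v_4),(v_2,v_5),(v_3,v_6)$ shows that $v\mapsto -f_2((x,v))$ is isometric to $\langle\langle -i\rangle\rangle\otimes\langle y_1,y_2,y_3\rangle$ with $i=f_1(x)=y_1y_2y_3-\tfrac14y_0^2$, i.e. to the $K^\perp$-component of the norm of the octonion attached to $\mathcal{O}(x)$ in Theorem~\ref{KC} — the structural meaning one expects of $f_2$, which corroborates the answer.
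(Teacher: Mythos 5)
Your proposal is the same direct-computation approach the paper uses (the paper's proof is just "direct computations, using the definition of $f_{2}$"), and the matrix $M_J\phi(x)$ you anticipate is indeed correct: for instance one checks $\phi_{41}(x)=2y_2y_3$, $\phi_{44}(x)=y_0$, $\phi_{14}(x)=-2y_1$, which give $(M_J\phi)_{11}=2y_2y_3$, $(M_J\phi)_{14}=y_0$, $(M_J\phi)_{44}=2y_1$ as you predicted. Your two sanity checks (the specialization $y_1=y_2=y_3=0$ recovering Proposition \ref{inv2}, and the identification of $v\mapsto -f_2((x,v))$ with $\langle\langle -i\rangle\rangle\otimes\langle y_1,y_2,y_3\rangle$ up to isometry) are both sound and are useful corroboration.
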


\begin{proof}
    The proof follows from direct computations, using the definition of $f_{2}$ (see Section \ref{3}).
\end{proof}

Let $\tilde{x}=(x,v)\in V^{ss}_{k}$; i.e., $f_{1}(x)=i\in k^{\times}$ and also $f_{2}(\tilde{x})\neq 0$. Assume that $-i\notin k^{\times 2}$. Then we have the following result.

\begin{proposition}\label{SL}
    The stabilizer of $\tilde{x}=(x,v)\in V^{ss}_{k}$ is isomorphic to $SL_{2}$ over $k(\sqrt{-i})$, where $f_{1}(x)=i\neq 0$ and $-i\notin k^{\times 2}$.
\end{proposition}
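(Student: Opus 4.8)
The plan is to work over $k(\sqrt{-i})$, where by Theorem \ref{KC} and the cited computations from (\cite{IJ}) the element $x$ is $G$-equivalent to the normal form $x_{1}=-e_{1}e_{2}e_{3}-y_{0}e_{4}e_{5}e_{6}$ with $y_{0}=2\sqrt{-i}$, and its stabilizer in $Sp_{6}$ is the copy of $SL_{3}$ sitting diagonally as $\mathrm{diag}(A,(A^{t})^{-1})$. Since $f_{1}$ is $G$-invariant, the stabilizer of $\tilde{x}=(x,v)$ is conjugate (over $k(\sqrt{-i})$) to the stabilizer of a pair $(x_{1},w)$ for some $w\in V_{6}$; so the task reduces to computing, inside that $SL_{3}$, the stabilizer of a vector $w=(w_{1},\dots,w_{6})^{t}\in V_{6}$ subject to $f_{2}((x_{1},w))\neq 0$.

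The key computation is the action of the diagonal $SL_{3}$ on $V_{6}=k^{3}\oplus k^{3}$: writing $w=(p,q)$ with $p,q\in k^{3}$, the element $\mathrm{diag}(A,(A^{t})^{-1})$ sends $(p,q)$ to $(Ap,(A^{t})^{-1}q)$. By Proposition \ref{inv2}, $f_{2}((x_{1},w))=-y_{0}\,p^{t}q$, so the hypothesis $f_{2}\neq 0$ says exactly that $p^{t}q\neq 0$, i.e.\ $p$ and $q$ are both nonzero and pair nontrivially under the standard pairing $k^{3}\times k^{3}\to k$. First I would use the $SL_{3}$-action to normalize: since $p\neq 0$ we may move $p$ to $e_{1}$ (say), and then the residual stabilizer of $p=e_{1}$ is the affine group $\{A\in SL_{3}: Ae_{1}=e_{1}\}$; acting on the $q$-slot by $(A^{t})^{-1}$ with $A$ in that subgroup, and using $p^{t}q=q_{1}\neq 0$, one can further normalize $q$ (scaling is not available in $SL_{3}$, but the shear part of the affine group lets one kill $q_{2},q_{3}$, reducing to $q=(q_{1},0,0)^{t}$ with $q_{1}\neq 0$). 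The stabilizer of this normalized pair is then the set of $A\in SL_{3}$ fixing $e_{1}$ and with $(A^{t})^{-1}$ fixing $e_{1}$ as well; writing $A$ in block form relative to the splitting $k\oplus k^{2}$, both conditions force $A=\mathrm{diag}(1,A')$ with $A'\in SL_{2}$. Hence $\mathrm{Stab}(x_{1},w)\simeq SL_{2}$ over $k(\sqrt{-i})$, and conjugating back gives the same for $\mathrm{Stab}(\tilde{x})$.

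The main obstacle I anticipate is the normalization bookkeeping in the second step: one must check that the orbit of $SL_{3}$ on the set $\{(p,q): p^{t}q\neq 0\}$ is transitive onto a single normal form (over the base field $k(\sqrt{-i})$, which here is the relevant ground field since we have passed to that extension) — equivalently that $\{(p,q):p^{t}q=c\}$ is a single $SL_{3}$-orbit for each fixed $c\neq 0$ — and then that the pointwise stabilizer really collapses to $SL_{2}$ and not something larger. The transitivity is a standard fact about the action of $SL_{n}$ on pairs (vector, covector) with nonzero pairing, and the stabilizer computation is the block-matrix argument sketched above; I would carry these out by an explicit change of basis rather than invoking a general theorem, since the paper's style is computational and the ambient identifications are all concrete. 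Once the stabilizer is identified as $SL_{2}$ over $k(\sqrt{-i})$, the statement as written is proved; the descent to a $k$-form of $SL_{2}$ (presumably via a Galois-twisting argument as in the proof of Theorem \ref{KC}) is a separate matter handled in the subsequent development.
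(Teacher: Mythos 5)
Your proposal is correct and takes essentially the same approach as the paper: reduce over $k(\sqrt{-i})$ to the normal form $x_{1}=-e_{1}e_{2}e_{3}-2\sqrt{-i}\,e_{4}e_{5}e_{6}$ whose stabilizer is the diagonal $SL_{3}$, use Proposition~\ref{inv2} to see that the condition $f_{2}\neq 0$ is precisely that the pairing $p^{t}q\neq 0$, normalize the pair $(p,q)\in k^{3}\oplus k^{3}$ to a standard representative by an explicit change of basis inside $SL_{3}$, and identify the residual stabilizer as the block-diagonal copy of $SL_{2}$. The only cosmetic difference is that the paper carries out the normalization with explicit lower-triangular matrices in two steps (landing on $(q(v),0,0,1,0,0)^{t}$), whereas you phrase it as a vector/covector orbit argument with unipotent shears; the underlying computation and the final block-matrix identification of the stabilizer are the same.
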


\begin{proof}
    We have $\tilde{x}=(x,v)\in V=X\oplus V_{6}$ and $f_{1}(x)=i\in k^{\times}, f_{2}(\tilde{x})\in k^{\times}$. As we have seen in (\cite{IJ}), $\exists $ $g\in G(k(\sqrt{-i}))$ such that \[g.x=-e_{1}e_{2}e_{3}-2(\sqrt{-i})e_{4}e_{5}e_{6}\in X.\] So, we may assume $\tilde{x}=( -e_{1}e_{2}e_{3}-2(\sqrt{-i})e_{4}e_{5}e_{6},v)$ up to $G(k(\sqrt{-i}))$-equivalence. The stabilizer of $-e_{1}e_{2}e_{3}-2(\sqrt{-i})e_{4}e_{5}e_{6}$ in $G$ is given by \[ \biggl\{\left(\begin{array}{cc}
A & 0 \\
0 & (A^{t})^{-1}
\end{array}\right):A\in SL_{3}\biggr\}\simeq SL_{3}.\] Hence, the stabilizer of $\tilde{x}$ in $G(k(\sqrt{-i}))$ is contained in the above copy of $SL_{3}$, as it will stabilize both $-e_{1}e_{2}e_{3}-2(\sqrt{-i})e_{4}e_{5}e_{6}$ and $v$ individually. Let $v=(v_{1},v_{2},v_{3},v_{4},v_{5},v_{6})^{t}\in V_{6}$. Then $f_{2}(\tilde{x})\neq 0$ implies $v_{1}v_{4}+v_{2}v_{5}+v_{3}v_{6}\neq 0$ (see Proposition \ref{inv2}). Without loss of generality, we may assume that $v_{1}\neq 0, v_{4}\neq 0$. To see this, suppose $v_{2}\neq 0,v_{5}\neq 0$ for the above $v\in V_{6}$. Then we have \[g^{\prime}.((-e_{1}e_{2}e_{3}-2(\sqrt{-i})e_{4}e_{5}e_{6},v))=(-e_{1}e_{2}e_{3}-2(\sqrt{-i})e_{4}e_{5}e_{6},(-v_{2},v_{1},v_{3},-v_{5},v_{4},v_{6})^{t}),\] where $g^{\prime}\in G$ is given by \[g^{\prime}=\left(\begin{array}{cc}
A_{1} & 0 \\
0 & (A_{1}^{t})^{-1}
\end{array}\right) \text{ and }A_{1}=\left(\begin{array}{ccc}
0 & -1 & 0 \\
1 & 0 & 0 \\
0 & 0 & 1
\end{array}\right)\in SL_{3}.\] The same can be done if $v_{3}\neq 0, v_{6}\neq 0$, by changing the above $A_{1}$ in $g^{\prime}$. So, we assume that $v_{1},v_{4}\neq 0$. Now we have \[A=\left(\begin{array}{ccc}
v_{1}^{-1} & 0 & 0 \\
-v_{2} & v_{1} & 0 \\
-v_{3}v_{1}^{-1} & 0 & 1
\end{array}\right) \in SL_{3} \text{ and } g_{1}=\left(\begin{array}{cc}
A & 0 \\
0 & (A^{t})^{-1}
\end{array}\right)\in G. \] Then $g_{1}.v=(1,0,0,v_{4}^{\prime},v_{5}^{\prime},v_{6}^{\prime})^{t}$ for some scalars $v_{4}^{\prime},v_{5}^{\prime},v_{6}^{\prime}$. Here $v_{4}^{\prime}=v_{1}v_{4}+v_{2}v_{5}+v_{3}v_{6}\neq 0$. So, we take \[B=\left(\begin{array}{ccc}
v_{4}^{\prime-1} & 0 & 0 \\
-v_{5}^{\prime} & v_{4}^{\prime} & 0 \\
-v_{6}^{\prime}v_{4}^{\prime -1} & 0 & 1
\end{array}\right) \in SL_{3} \text{ and } g_{2}=\left(\begin{array}{cc}
(B^{t})^{-1} & 0 \\
0 & B
\end{array}\right)\in G.\] Then we get $g_{2}.(g_{1}.v)=(q(v),0,0,1,0,0)^{t}$, where $q(v)=v_{1}v_{4}+v_{2}v_{5}+v_{3}v_{6}$. As both $g_{1},g_{2}$ belong to the stabilizer of $-e_{1}e_{2}e_{3}-2(\sqrt{-i})e_{4}e_{5}e_{6}$, we have \[ g_{2}.(g_{1}.\tilde{x})=(-e_{1}e_{2}e_{3}-2(\sqrt{-i})e_{4}e_{5}e_{6},(q(v),0,0,1,0,0)^{t} ),\] and the entire process is rational over $k(\sqrt{-i})$. The stabilizer of $(q(v),0,0,1,0,0)^{t}$ inside the stabilizer of $-e_{1}e_{2}e_{3}-2(\sqrt{-i})e_{4}e_{5}e_{6}$ contains elements of the form \[ g_{3}=\left(\begin{array}{cc}
A & 0 \\
0 & (A^{t})^{-1}
\end{array}\right)\in Sp_{6}, \text{ where } A=\left(\begin{array}{ccc}
1 & a & b \\
0 & c & d \\
0 & e & f
\end{array}\right)\in SL_{3}, \] and $a,b,c,d,e,f$ are scalars satisfying $cf-ed=1$. Using the relation \[g_{3}.(q(v),0,0,1,0,0)^{t}=(q(v),0,0,1,0,0)^{t}\] we get $a=b=0$. So, the stabilizer is given by \[ SL_{2}\simeq \biggl\{\left(\begin{array}{cc}
C &  0 \\
0  &  (C^{t})^{-1}
\end{array}\right): C= \left(\begin{array}{cc}
1 & 0 \\
0 & C_{1}
\end{array}\right), C_{1}\in SL_{2}\biggr\}\subset \biggl\{\left(\begin{array}{cc}
A & 0 \\
0 & (A^{t})^{-1}
\end{array}\right):A\in SL_{3}\biggr\}.\] Thus, the result follows.

\end{proof}

\vspace{1 mm}

\begin{remark}
    If $-i\in k^{\times 2}$ in the above result, the stabilizer of $\tilde{x}\in V^{ss}_{k}$ is isomorphic to $SL_{2}$ over the base field $k$ itself. 
\end{remark}

\vspace{2 mm}

\begin{remark}\label{quat}
From the above result, it follows that the stabilizer of any element $\tilde{x}\in V^{ss}_{k}$ is isomorphic to some $k(\sqrt{-i})/k$-form of $SL_{2}$ over $k$, i.e., $SL_{1}(Q)=\{u\in Q:N_{Q}(u)=1\}$ for some unique quaternion algebra $(Q,N_{Q})$ over $k$ (see \cite{SJP}, Chapter III, Section $1$, page 125).    
\end{remark}

\vspace{2 mm}

\begin{theorem}\label{fiber orbit}
    The subset $f_{1}^{-1}\{i\}\cap V^{ss}\subset V $ is $G$-invariant for each non-zero scalar $i$. If $-i\in k^{\times}-k^{\times 2}$, over $k(\sqrt{-i})$ this set decomposes into the disjoint orbits with representatives \[(-e_{1}e_{2}e_{3}-2(\sqrt{-i})e_{4}e_{5}e_{6}, (a,0,0,1,0,0)^{t})\in V^{ss}, a\neq 0.\]
\end{theorem}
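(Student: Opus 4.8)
The plan is to reduce the statement to the analysis already carried out in Proposition~\ref{SL}. First I would note that $G$-invariance of $f_{1}^{-1}\{i\}\cap V^{ss}$ is immediate: both $f_{1}$ and $f_{2}$ are absolute invariants of $G=Sp_{6}$ (see Section~\ref{3}), so $g.\tilde{x}$ lies in $f_{1}^{-1}\{i\}\cap V^{ss}$ whenever $\tilde{x}$ does. The content is the orbit decomposition over $K:=k(\sqrt{-i})$. The key input is that over $K$ the element $x$ with $f_{1}(x)=i$ is $G(K)$-equivalent to $x_{1}=-e_{1}e_{2}e_{3}-2(\sqrt{-i})e_{4}e_{5}e_{6}$ (this was used repeatedly in Proposition~\ref{SL}, following \cite{IJ}). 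So, up to $G(K)$-equivalence, every $\tilde{x}=(x,v)\in f_{1}^{-1}\{i\}\cap V^{ss}$ may be taken of the form $(x_{1},v)$ with $v\in V_{6}$ satisfying $q(v)=v_{1}v_{4}+v_{2}v_{5}+v_{3}v_{6}\neq 0$ by Proposition~\ref{inv2}.

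Next I would invoke the explicit reduction of $v$ carried out inside the proof of Proposition~\ref{SL}: using elements $g_{1},g_{2}\in G$ that lie in the stabilizer $SL_{3}$ of $x_{1}$, one brings $(x_{1},v)$ to $(x_{1},(q(v),0,0,1,0,0)^{t})$, and the whole process is rational over $K$. Writing $a=q(v)\in K^{\times}$ (note $a\neq 0$ precisely because $f_{2}(\tilde{x})\neq 0$), this shows every $G(K)$-orbit in the set contains a representative of the claimed form $(x_{1},(a,0,0,1,0,0)^{t})$. It remains to check that distinct values of $a$ give distinct orbits. For this I would argue that the $X$-component $x_{1}$ is fixed by any element of $G(K)$ moving one such representative to another, so such an element lies in the $SL_{3}$ stabilizer of $x_{1}$; this $SL_{3}$ acts on the $V_{6}$-component as $A\cdot v$ with the block structure $\mathrm{diag}(A,(A^{t})^{-1})$, which preserves the quadratic form $q$. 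Hence $q$ of the $V_{6}$-component is a $G(K)$-orbit invariant, and $q((a,0,0,1,0,0)^{t})=a$, so two representatives with $a\neq a'$ cannot be equivalent. Combining: over $K$ the set is the disjoint union of the orbits of $(x_{1},(a,0,0,1,0,0)^{t})$ for $a\in K^{\times}$, which is exactly the assertion.

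The main obstacle, and the only point requiring care, is the disjointness claim, i.e.\ that $a$ is a genuine orbit separator rather than merely a normal-form parameter. The argument above reduces this to the observation that $Sp_{6}$ fixing $x_{1}$ forces the standard $SL_{3}\hookrightarrow Sp_{6}$ image, whose action on $V_{6}$ is orthogonal for $q$; one should double-check that no element of $G(K)$ outside this $SL_{3}$ can fix $x_{1}$ (this is exactly the stabilizer computation from \cite{IJ} quoted in Proposition~\ref{SL}, so it is available) and that $q$ is indeed the relevant invariant form, which follows from the Remark after Proposition~\ref{inv2}. Everything else is either a direct consequence of invariance of $f_{1},f_{2}$ or a verbatim reuse of the rational reduction steps already performed in the proof of Proposition~\ref{SL}, so no new computation is needed.
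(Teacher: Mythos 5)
Your proof is correct and takes essentially the same approach as the paper's: reduce any element to the form $(x_1,(q(v),0,0,1,0,0)^t)$ via the computation already carried out in Proposition~\ref{SL}, then use $\text{Stab}(x_1)\simeq SL_3\subset SO(q)$ to see that the value $q$ of the $V_6$-component separates $G(K)$-orbits. You spell out the disjointness step (that $a$ is an orbit invariant and not merely a normal-form parameter) a bit more explicitly than the paper does, but the underlying ideas and the appeal to Proposition~\ref{SL} are identical.
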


\begin{proof}
    As both $f_{1}$ and $V^{ss}$ are invariant under the $G$-action, the first claim is true. Now, from the proof of the Proposition \ref{SL} we get that any element $\tilde{x}=(x,v)\in V^{ss}$ with $f_{1}(x)=i\neq 0$ is $G(k(\sqrt{-i}))$-equivalent to an element of the form \[\tilde{y}=(-e_{1}e_{2}e_{3}-2(\sqrt{-i})e_{4}e_{5}e_{6}, (q(v),0,0,1,0,0)^{t})\in V^{ss},\] where $q$ is the hyperbolic quadratic form defined on $V_{6}$ by \[ q(v)=v_{1}v_{4}+v_{2}v_{5}+v_{3}v_{6}, \text{ for } v=(v_{1},v_{2},v_{3},v_{4},v_{5},v_{6})^{t}\in V_{6}.\] Since $q$ is hyperbolic, it is universal and we also have $\text{Stab}(-e_{1}e_{2}e_{3}-2(\sqrt{-i})e_{4}e_{5}e_{6})\simeq SL_{3}\subset SO(q)$. So, all the orbits are given by the following representatives \[ (-e_{1}e_{2}e_{3}-2(\sqrt{-i})e_{4}e_{5}e_{6}, (a,0,0,1,0,0)^{t}),\] where $a$ is any non-zero scalar.
\end{proof}

\vspace{1 mm}

\begin{remark}
    If $-i\in k^{\times 2}$ in the above theorem, the $G_{k}$-orbits in $f_{1}^{-1}\{i\}(k)\cap V^{ss}_{k}$ are given by the same representatives, i.e., \[(-e_{1}e_{2}e_{3}-2(\sqrt{-i})e_{4}e_{5}e_{6}, (a,0,0,1,0,0)^{t})\in V^{ss}_{k}, a\in k^{\times}.\]
\end{remark}

\vspace{2 mm}

    For any $i,j\in k^{\times }$ we take $U_{ij}=f_{1}^{-1}\{ i\}\cap f_{2}^{-1}\{j\}\subset V^{ss}$, i.e., $U_{ij}$ is $G$-invariant. Let $SL_{1}(Q)$ be the stabilizer of an element $(x,v)\in U_{ij}(k)$ under the action of $G_{k}$, for some quaternion algebra $Q$ over $k$ (see Remark \ref{quat}). Then we have

\begin{corollary}\label{CUij}
    If $-i\notin k^{\times 2}$, $U_{ij}$ is a single $G$-orbit over $k(\sqrt{-i})$, and $U_{ij}(k)/G_{k}$ has bijection with the set of all isomorphism classes of octonion algebras containing $Q$ as a quaternion subalgebra.
\end{corollary}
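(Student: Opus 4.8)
The plan is to combine Theorem \ref{fiber orbit} with the cohomological machinery of Corollary \ref{c1}, exactly in the spirit of the proof of Theorem \ref{KC}. First, since $U_{ij} = f_1^{-1}\{i\} \cap f_2^{-1}\{j\} \cap V^{ss}$ and both $f_1,f_2$ are $G$-invariant, $U_{ij}$ is $G$-invariant. By Theorem \ref{fiber orbit}, over $k(\sqrt{-i})$ the set $f_1^{-1}\{i\}\cap V^{ss}$ breaks into orbits with representatives $(-e_1e_2e_3 - 2(\sqrt{-i})e_4e_5e_6,\,(a,0,0,1,0,0)^t)$; by Proposition \ref{f2} (or \ref{inv2}) the value of $f_2$ on such a representative is (up to sign) $2(\sqrt{-i})\cdot a$, so fixing $f_2 = j$ pins down $a$ uniquely. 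Hence $U_{ij}$ is a single orbit over $k(\sqrt{-i})$, which is the first assertion.

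Next I would apply Corollary \ref{c1} to $H_1 = \mathrm{Stab}(\tilde x) \subset G_1 = Sp_6$ for a chosen $k$-point $\tilde x \in U_{ij}(k)$. Since $H^1(k,Sp_6)$ is trivial (as in the proof of Theorem \ref{KC}), Corollary \ref{c1} gives a bijection $U_{ij}(k)/G_k \longleftrightarrow \ker\big(H^1(k,\mathrm{Stab}(\tilde x)) \to H^1(k,Sp_6)\big) = H^1(k,\mathrm{Stab}(\tilde x))$. By Proposition \ref{SL} and Remark \ref{quat}, $\mathrm{Stab}(\tilde x) \simeq SL_1(Q)$ for a uniquely determined quaternion algebra $Q$ over $k$ (the $k(\sqrt{-i})/k$-form of $SL_2$ attached to $\tilde x$). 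So $U_{ij}(k)/G_k \longleftrightarrow H^1(k, SL_1(Q))$.

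Finally I would identify $H^1(k,SL_1(Q))$ with the desired set of octonion algebras. Here the key input is the description recalled in Section \ref{2}: if $C$ is an octonion containing $Q$ as a quaternion subalgebra, then $SL_1(Q) = \mathrm{Aut}(C/Q)$, and (via the identification $SL_1(Q) \simeq SU(h_2)$ with $h_2$ the restriction of the hermitian form to $Q^\perp$) the pointed set $H^1(k,SL_1(Q))$ is in bijection with the isomorphism classes of such octonions; equivalently, by the Cayley–Dickson description at the end of the composition-algebra subsection, these are parametrized by $k^\times / N(Q/k)^\times = k^\times / \{N_Q(x) : x \in Q^\times\}$, which is precisely the classification of octonions obtained by doubling $Q$. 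Chaining the two bijections yields the claim.

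The main obstacle is the bookkeeping in the middle step: one must check that the quaternion algebra $Q$ attached to $\mathrm{Stab}(\tilde x)$ really is the quaternion subalgebra of the octonion produced by the $X$-component, i.e. that the flag data are compatible — that the $SL_2$-form pinned down by $(i,j)$ and the ternary hermitian form of Theorem \ref{KC} fit together as $h = \mathrm{diag}(1,y_1,y_2,y_3)$ with $h_2 = \mathrm{diag}(y_2,y_3)$ as in Section \ref{2}. This is where the explicit coordinates from Propositions \ref{SL} and \ref{f2} have to be matched against the $K/k$-hermitian picture, but it is a direct (if fiddly) calculation rather than a conceptual difficulty. The degenerate case $-i \in k^{\times 2}$, where $K \simeq k\times k$, $Q$ may be split or not, and everything is already rational over $k$, is handled by the same argument and can be relegated to a remark.
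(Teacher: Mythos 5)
Your proposal is correct and follows essentially the same route as the paper: Theorem \ref{fiber orbit} together with the $f_2$-computation pins down the representative (and hence a single orbit over $k(\sqrt{-i})$), then Corollary \ref{c1} with $H^1(k,Sp_6)=1$ gives the bijection with $H^1(k,SL_1(Q))$, which is identified with octonions containing $Q$. The ``flag compatibility'' bookkeeping you flag as a potential obstacle is not actually needed for this corollary as stated (the bijection is simply with octonions containing the quaternion $Q$ determined by the stabilizer); that matching is deferred to Theorem \ref{CD}.
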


\begin{proof}
    The first part follows from Theorem \ref{fiber orbit}, as any element $(x,v)\in U_{ij}$ is $G$-equivalent to \[(-e_{1}e_{2}e_{3}-2(\sqrt{-i})e_{4}e_{5}e_{6},(\frac{-j}{2\sqrt{-i}},0,0,1,0,0)^{t})\] over $k(\sqrt{-i})$. Now, using Corollary \ref{c1} we get \[U_{ij}(k)/G_{k} \longleftrightarrow H^{1}(k,SL_{1}(Q)).\] The cohomology set $H^{1}(k,SL_{1}(Q))$ is in bijection with the isomorphism classes of octonion algebras containing $Q$ as a subalgebra.
\end{proof}

\vspace{2 mm}

\begin{remark}
   $U_{ij}(k)$ is a single $G_{k}$-orbit if $-i\in k^{\times 2}$. So, from the above corollary we get the $k$-rational orbit space $V^{ss}_{k}/G_{k}$, as $V^{ss}_{k}=\bigsqcup_{i,j\in k^{\times}}U_{ij}(k)$ and each $U_{ij}(k)$ is invariant under the action of $G_{k}$. 
\end{remark}

\vspace{1 mm}

Now we proceed to the main theorem of this section, where we construct the canonical surjection from the orbit space $V^{ss}_{k}/G_{k}$ onto the set of all maximal flags of composition algebras defined over the field $k$.

\begin{theorem}\label{CD}
Each point in the orbit space $V^{ss}_{k}/G_{k}$ represents a unique isomorphism class of maximal flags of composition algebras over $k$, and all such isomorphism classes of maximal $4$-flags can be obtained in this way.
\end{theorem}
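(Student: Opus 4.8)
The plan is to bootstrap from the two already-established layers of structure on a point $\tilde{x}=(x,v)\in V^{ss}_{k}$: Theorem \ref{KC} attaches to the $X$-component $x$ a $2$-flag $(K\subset C)$ of composition algebras (with $\dim K=2$, $\dim C=8$), and Corollary \ref{CUij} together with Remark \ref{quat} attaches to $\tilde{x}$ a quaternion algebra $Q$ with $\mathrm{Stab}(\tilde{x})\simeq SL_1(Q)$. The first task is to show these fit into a $4$-flag $K\subset Q\subset C$. For this I would work over $k(\sqrt{-i})$, where by Theorem \ref{fiber orbit} we may take $\tilde{x}=(-e_1e_2e_3-2\sqrt{-i}\,e_4e_5e_6,(a,0,0,1,0,0)^t)$, and chase the Galois descent data: the stabilizer $SL_2\hookrightarrow SL_3$ computed in Proposition \ref{SL} realizes $SL_1(Q)$ as the subgroup of $\mathrm{Aut}(C/K)\simeq SU(h)$ (from the Remark after Theorem \ref{KC}) fixing a rank-$2$ hermitian subform $h_2$ of $h$; by the discussion in Section \ref{2} on octonions and hermitian forms, $SU(h_2)\simeq SL_1(Q)$ for the quaternion subalgebra $Q=K\oplus(Q\cap K^\perp)$ sitting between $K$ and $C$, so the three algebras genuinely form a chain. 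Concretely, if $h$ is represented by $\mathrm{diag}(y_1,y_2,y_3)$ with $y_1y_2y_3$ a $K/k$-norm, then the scalar $a$ determines the splitting $h=\langle y_1\rangle\perp h_2$ and hence $Q$.

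The second task is well-definedness and injectivity of the assignment $\mathcal{O}(\tilde{x})\mapsto(k\subset K\subset Q\subset C)$ on orbits. Well-definedness is immediate since $f_1,f_2$ are $G$-invariant and $G$ acts by algebra isomorphisms on all the associated structures (cf. the Remark after Theorem \ref{KC}). For injectivity I would argue as in the proof of Theorem \ref{KC}: fix the flag $(K\subset Q\subset C)$ with $K=k(\sqrt{-i})$. The orbits $\mathcal{O}(\tilde{x})$ mapping into the sub-flag $(K\subset C)$ are exactly those with $f_1(x)\in ik^{\times 2}$, and among these, Corollary \ref{CUij} identifies $U_{ij}(k)/G_k$ with $H^1(k,SL_1(Q))$, which is in bijection with octonions containing $Q$. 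Refining: the octonion produced must be $C$ itself, and the fibre over a fixed $(K\subset Q\subset C)$ should collapse to a single orbit once one tracks that rescaling $f_2$ corresponds to rescaling within a single similarity class of the hermitian data — essentially the same $k^{\times}/N(K/k)$ counting that appears in the Cayley--Dickson doubling discussion of Section \ref{2}. I would phrase this as: the map $V^{ss}_k/G_k \to \{4\text{-flags}\}$ has the property that two orbits have the same image iff they lie in the same $U_{ij}(k)$ up to the equivalences $i\sim ia^2$, $j\sim$ (the induced rescaling), and that each such class is a single orbit.

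For surjectivity I would reverse the construction exactly as in Theorem \ref{KC}: given $(k\subset K\subset Q\subset C)$, write $K=k(\sqrt{-i})$, take the trivial-discriminant ternary $K/k$-hermitian form $h=\mathrm{diag}(y_1,y_2,y_3)$ attached to $C$ (via \cite{JN},\cite{TML}) in the special form where $\mathrm{diag}(y_2,y_3)$ represents the subform $h_2$ attached to $Q$ (Section \ref{2}), pick $y_0\in k$ with $y_1y_2y_3=\tfrac14 y_0^2+i$, set $x=-e_1e_2e_3-y_0e_4e_5e_6+y_1e_1e_5e_6+y_2e_4e_2e_6+y_3e_4e_5e_3\in X^{ss}_k$ as in Theorem \ref{KC}, and then choose $v\in V_6$ so that $f_2((x,v))$ is the prescribed nonzero value isolating $Q$ — using the explicit formula of Proposition \ref{f2}, e.g. $v=(v_1,0,0,v_4,0,0)^t$ with $y_2y_3v_1^2+y_1v_4^2+y_0v_1v_4$ chosen nonzero. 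One must also treat the split case $-i\in k^{\times 2}$ separately, where $K\simeq k\times k$, $C=Zorn(k)$ is forced, and $Q$ is determined by the analogous splitting of a split hermitian form; here every $U_{ij}(k)$ is a single orbit by the Remark after Corollary \ref{CUij}, so the fibre analysis is trivial.

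The main obstacle I expect is the injectivity/fibre step: proving that once the $4$-flag $(K\subset Q\subset C)$ is fixed, the corresponding orbits form a \emph{single} $G_k$-orbit rather than a torsor under some residual cohomology. The subtlety is that $H^1(k,SL_1(Q))$ (via Corollary \ref{CUij}) classifies octonions-containing-$Q$, but within a fixed such octonion $C$ there can a priori be several $G_k$-orbits distinguished by $f_2$-values; I would need to show that the $SL_3$-action on the $v$-component (which lies in a hyperbolic space with $SL_3\subset SO(q)$, cf.\ the Remark after Proposition \ref{inv2}) is transitive on the relevant level sets, i.e.\ that the extra data in $v$ beyond $f_2(\tilde{x})$ is entirely absorbed by the stabilizer $SU(h)$ of $x$. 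This is the analogue of the computation in Proposition \ref{SL} showing the residual stabilizer is $SL_2$, and the transitivity should follow from Witt's theorem applied to $h$ restricted to $K^\perp$, but it requires care to see that the hermitian reflection moving one choice of $Q\cap K^\perp$ to another is realized by an element of $SU(h)\subset G_k$ and not merely of $U(h)$.
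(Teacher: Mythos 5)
The theorem only claims that the assignment $\mathcal{O}(\tilde{x})\mapsto(k\subset K\subset Q\subset C)$ is well-defined on orbits and surjective onto isomorphism classes of maximal flags; it makes no injectivity claim. Indeed, the paper's proof opens by saying it suffices to construct a surjection $f_1^{ss}$, and Remark \ref{fiber} and the subsequent remark explicitly describe fibers of $f_1^{ss}$ as containing many orbits (already at the level of the $X$-component, two semi-stable elements $x,x'$ with $f_1(x')=a^2f_1(x)$ are in general not $G_k$-equivalent, yet map to the same $(K\subset C)$). Your ``second task'' and, especially, the ``main obstacle'' paragraph — worrying whether each $4$-flag is hit by a single orbit and whether Witt's theorem descends from $U(h)$ to $SU(h)$ — are therefore aimed at a statement that is both unnecessary and false for $f_1^{ss}$ as a map on $V^{ss}_k/G_k$. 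Your first paragraph (construction of the chain via $SL_1(Q)\subset SU(h)=\mathrm{Aut}(C/K)\subset\mathrm{Aut}(C)$) and the well-definedness remark match the paper's argument.

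There is a genuine gap in your surjectivity step. You say: choose $v=(v_1,0,0,v_4,0,0)^t$ so that $f_2((x,v))\neq 0$ is a ``prescribed value isolating $Q$.'' But a choice of $f_2$-value alone does not determine the isomorphism class of $\mathrm{Stab}((x,v))$, and you have not verified that for your chosen $v$ the stabilizer inside $Sp_6(k)$ is $SU(\mathrm{diag}(y_2,y_3))\simeq SL_1(Q)$ rather than $SL_1$ of some other quaternion. The paper closes exactly this gap by making a concrete choice ($a=2$, $b=-y_0/y_1$), producing an explicit $g\in Sp_6(k(\sqrt{-i}))$ that conjugates the standard $SL_2$ stabilizer of the normal form $(-e_1e_2e_3-2\sqrt{-i}e_4e_5e_6,(\tfrac{4\sqrt{-i}}{y_0+2\sqrt{-i}},0,0,\tfrac{-(y_0+2\sqrt{-i})}{2y_1},0,0)^t)$ to $gSL_2g^{-1}$, and then checking that the $\sigma$-fixed points of $gSL_2g^{-1}$ are precisely $SU(\mathrm{diag}(y_2,y_3))$. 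Without that descent computation (or an equivalent argument pinning down the $k(\sqrt{-i})/k$-form of $SL_2$ that appears), the surjectivity claim is not established; you have only shown that some quaternion $Q'$ with $K\subset Q'\subset C$ occurs, not that the given $Q$ does.
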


\begin{proof}
It will be enough to define a surjective map $f^{ss}_{1}$ from the orbit space $V^{ss}_{k}/G_{k}$ onto the set of all isomorphism classes of maximal $4$-flags of composition algebras over $k$.

\vspace{2 mm}

Let $\tilde{x}=(x,v)\in V^{ss}_{k}$. So, $f_{1}(x)=i$ for some $i\in k^{\times}$, and $f_{2}(\tilde{x})\in k^{\times}$. We first consider the case where $-i\notin k^{\times 2}$. We may assume that $x$ is of the form \[ x=-e_{1}e_{2}e_{3}-y_{0}e_{4}e_{5}e_{6}+y_{1}e_{1}e_{5}e_{6}+y_{2}e_{4}e_{2}e_{6}+y_{3}e_{4}e_{5}e_{3},\] where $y_{1}y_{2}y_{3}=\frac{1}{4}y_{0}^{2}+i$. Then $\mathcal{O}(x)\in X^{ss}_{k}/G_{k}$ determines a unique $2$-flag of the form $(K\subset C)$, as we can see in Theorem \ref{KC}, where $K=k(\sqrt{-i})$. Let $Q$ be the quaternion defined over $k$ such that $SL_{1}(Q)$ is the stabilizer of $\tilde{x}\in V^{ss}_{k}$ in $Sp_{6}(k)$, and so we have $SL_{1}(Q)\subset SU(h)$, where $h$ is the non-degenerate ternary $k(\sqrt{-i})/k$-hermitian form represented by the diagonal matrix $\text{diag}(y_{1},y_{2},y_{3})$ (see Theorem \ref{KC}). Then we must have $K=k(\sqrt{-i})\subset Q\subset C$ (as $SL_{1}(Q)\subset SU(h)= \text{Aut}(C/K)\subset \text{Aut}(C)$), and we define \[f_{1}^{ss}(\mathcal{O}(\tilde{x}))=(k\subset K\subset Q\subset C),\] which is a unique maximal flag of composition algebras determined by the orbit $\mathcal{O}(\tilde{x})\in V^{ss}_{k}/G_{k}$. If $-i\in k^{\times 2}$, $k(\sqrt{-i})= k[t]/\langle t^{2}+i\rangle\simeq k\times k$ and $f_{1}^{ss}(\mathcal{O}(\tilde{x}))$ can be defined similarly using Theorem \ref{KC} and Proposition \ref{SL}, i.e., \[ f_{1}^{ss}(\mathcal{O}(\tilde{x}))=(k\subset k\times k\subset M_{2}(k)\subset Zorn(k)).\] Clearly, $f_{1}^{ss}$ is well defined. Now we proceed to prove the surjectivity of $f^{ss}_{1}$.

\vspace{2 mm}

 Let $(k\subset K\subset Q\subset C)$ be a maximal flag of composition algebras. Then the norm $N_{C}$ on $C$ is induced by a non-degenerate $K/k$-hermitian form with matrix representative of the form $\text{diag}(1,y_{1},y_{2},y_{3})$, where the norm of the quaternion subalgebra $Q$ is the trace form of the $K/k$-hermitian form $\text{diag}(1,y_{1})$, and $y_{1},y_{2},y_{3}\in k^{\times}$ is such that $y_{1}y_{2}y_{3}$ is a $K/k$-norm (see Section \ref{2}). First, we choose the orbit $\mathcal{O}(x)\in X^{ss}_{k}/G_{k}$ with \[x=-e_{1}e_{2}e_{3}-y_{0}e_{4}e_{5}e_{6}+y_{1}e_{1}e_{5}e_{6}+y_{2}e_{4}e_{2}e_{6}+y_{3}e_{4}e_{5}e_{3}\in X^{ss}_{k}.\] Then $f^{ss}(\mathcal{O}(x))=(K\subset C)$ (see Theorem \ref{KC}). Now, if we take $v=(a,0,0,b,0,0)^{t}\in V_{6}(k)$ with $a,b\in k^{\times}$, then for $\tilde{x}=(x,v)\in V_{k}$ we get from Proposition \ref{f2} \[f_{2}(\tilde{x})=-(y_{2}y_{3}a^{2}+y_{1}b^{2}+y_{0}ab).\] Clearly, there are many $a,b\in k^{\times}$ such that $f(\tilde{x})\neq 0$. Let us take $a=2$ and $b=-y_{0}/y_{1}$. Then, the stabilizer of the point $\tilde{x}$ with $f_{2}(\tilde{x})\neq 0$ is isomorphic to $SU(\text{diag}(y_{2},y_{3}))\simeq SL_{1}(Q)$ (see Section \ref{2}). To see this, we take the element \[g=\left(\begin{array}{cc}
\alpha & \beta \\
\gamma & \delta
\end{array}\right)\in Sp_{6}(k(\sqrt{-i})),\] where $\alpha,\beta,\gamma,\delta$ are the following matrices of order $3$.

    \[\alpha=\text{diag}(\frac{y_{0}+2\sqrt{-i}}{4\sqrt{-i}},1,1)\in M_{3}(k(\sqrt{-i})),\]
    \vspace{1 mm}
    \[\beta=\text{diag}(\frac{-2y_{1}}{y_{0}+2\sqrt{-i}}, \frac{-y_{2}}{2\sqrt{-i}}, \frac{-y_{3}}{2\sqrt{-i}})\in M_{3}(k(\sqrt{-i})),\]
    \vspace{1 mm}
    \[\gamma= \text{diag}(\frac{-(y_{0}^{2}+4i)}{8(\sqrt{-i})y_{1}}, \frac{-(y_{0}-2\sqrt{-i})}{2y_{2}},\frac{-(y_{0}-2\sqrt{-i})}{2y_{3}})\in M_{3}(k(\sqrt{-i})),\]
    \vspace{1 mm}
    \[\delta=\text{diag}(1,\frac{y_{0}+2\sqrt{-i}}{4\sqrt{-i}},\frac{y_{0}+2\sqrt{-i}}{4\sqrt{-i}})\in M_{3}(k(\sqrt{-i})).\] Then by direct computations we can check that \[g.(-e_{1}e_{2}e_{3}-2(\sqrt{-i})e_{4}e_{5}e_{6},(\frac{4\sqrt{-i}}{y_{0}+2\sqrt{-i}},0,0,\frac{-(y_{0}+2\sqrt{-i})}{2y_{1}},0,0)^{t})=(x,v)=\tilde{x},\] and the $k$-rational points in the stabilizer $gSL_{2}(k(\sqrt{-i}))g^{-1}\subset G(k(\sqrt{-i}))$ are the elements in $SU(\text{diag}(y_{2},y_{3}))$, where the embedding of $SL_{2}$ in $G=Sp_{6}$ is the same as described in Proposition \ref{SL}. Hence, we get \[f_{1}^{ss}(\mathcal{O}(\tilde{x}))=(k\subset K\subset Q\subset C),\] where $\mathcal{O}(\tilde{x})\in V^{ss}_{k}/G_{k}$. So, $f_{1}^{ss}$ is surjective.

\end{proof}

In the proof of the above theorem, the point $(x,v)\in V^{ss}_{k}$ has the stabilizer $SL_{1}(Q)\subset Sp_{6}(k)$, where \[x=-e_{1}e_{2}e_{3}-y_{0}e_{4}e_{5}e_{6}+y_{1}e_{1}e_{5}e_{6}+y_{2}e_{4}e_{2}e_{6}+y_{3}e_{4}e_{5}e_{3}\in X^{ss}_{k},\] $v=(2,0,0,\frac{-y_{0}}{y_{1}},0,0)^{t}\in V_{6}(k)$ and $SL_{1}(Q)\simeq SU(\text{diag}(y_{2},y_{3}))$. Now we consider the point $(x,v^{\prime})$ with $v^{\prime}=(0,2,0,0,\frac{-y_{0}}{y_{2}},0)^{t}\in V_{6}(k)$. For $(x,v^{\prime})$, $f_{1}$ has the same value as for $(x,v)$, and \[f_{2}((x,v^{\prime}))=-(4y_{1}y_{3}-\frac{y_{0}^{2}}{y_{2}})\neq 0.\] So, $(x,v^{\prime})\in V^{ss}_{k}$, and we can check that \[\text{Stab}((x,v^{\prime}))\simeq SU(\text{diag}(y_{1},y_{3})),\] similarly as we did in the above proof. In general, $SU(\text{diag}(y_{2},y_{3}))$ may not be isomorphic to $SU(\text{diag}(y_{1},y_{3}))$. In that case, the stabilizer of $(x,v^{\prime})\in V^{ss}_{k}$ is isomorphic to $SL_{1}(Q_{1})$, for some quaternion algebra $Q_{1}$ not isomorphic to $Q$. However, the octonion and quadratic algebras corresponding to the orbits of these two elements in $V^{ss}_{k}$ are the same (see Theorem \ref{KC}). Also note that $f_{2}((x,v))=-(4y_{2}y_{3}-y_{0}^{2}/y_{1})\neq f_{2}((x,v^{\prime}))$, if $Q_{1}$ and $Q$ are not isomorphic. This gives us an idea how the orbits are being changed when the quaternion in a maximal flag changes.

\vspace{2 mm}

\noindent \textbf{Canonical surjection:} The surjection in the proof of the above theorem, i.e., $f_{1}^{ss}$ appears in a very natural way. If $\tilde{x}=(x,v)\in V^{ss}_{k}\subset V=X\oplus V_{6}$ with $f_{1}(x)=i\in k^{\times}$, then $\text{Stab}(x)\simeq SU(h)\subset Sp_{6}(k)$, where $h$ is a trivial discriminant ternary $k(\sqrt{-i})/k$-hermitian form. It is a very well known fact that $SU(h)\simeq \text{Aut}(C/k(\sqrt{-i}))$, for some unique octonion $C$ containing $k(\sqrt{-i})$ as a subalgebra (see \cite{JN}, \cite{TML} for details). Again, $\text{Stab}(\tilde{x})\simeq SL_{1}(Q)\subset SU(h)\subset Sp_{6}(k)$ for some unique quaternion $Q$. From the above result, we get that \[f_{1}^{ss}(\mathcal{O}(\tilde{x}))=(k\subset k(\sqrt{-i})\subset Q\subset C).\] So, the maximal flag $f_{1}^{ss}(\mathcal{O}(\tilde{x}))$ corresponding to the $Sp_{6}(k)$-orbit of $\tilde{x}\in V^{ss}_{k}$ is being determined canonically by the stabilizers of the point $\tilde{x}$ and its $X$-component, inside the group $Sp_{6}(k)$.

\vspace{2 mm}

\begin{remark}
If $\tilde{x}=(x,v)\in V^{ss}_{k}$ is an orbit representative and $f_{1}^{ss}(\mathcal{O}(\tilde{x}))=(k\subset K\subset Q\subset C)$ for $\mathcal{O}(\tilde{x})\in V^{ss}_{k}/G_{k}$, then $f^{ss}(\mathcal{O}(x))=(K\subset C)$ for $\mathcal{O}(x)\in X^{ss}_{k}/G_{k}$.
\end{remark}

\vspace{2 mm}

\begin{remark}\label{fiber}
    Let $(k\subset K\subset Q\subset C)$ be a maximal flag. Then the fiber $(f_{1}^{ss})^{-1}\{(k\subset K\subset Q\subset C)\}$ is the following set \[\{\mathcal{O}((x,v))\in V^{ss}_{k}/G_{k}:f^{ss}(\mathcal{O}(x))=(K\subset C), \mathcal{O}(x)\in X^{ss}_{k}/G_{k},\text{Stab}((x,v))\simeq SL_{1}(Q)\}.\]
\end{remark}

\vspace{2 mm}

\begin{remark}
    Using the surjectivity of $f^{ss}_{1}$ we get $SL_{1}(Q)\subset Sp_{6}(k)$, for all quaternion algebras $Q$ defined over $k$. Also $SU(h)\subset Sp_{6}(k)$, for any trivial discriminant ternary $K/k$-hermitian form $h$ and $K/k$ any quadratic extension. We can also find out the embeddings of these groups inside $Sp_{6}(k)$, from the computations done earlier (see \cite{IJ} also).
\end{remark}

\vspace{2 mm}

\begin{remark}
    If we fix any composition algebra with dimension greater than $1$, the maximal flags containing that algebra are given by the following orbits.
    \begin{enumerate}
        \item Let us fix an octonion $C$. Then the orbits which correspond to the $4$-flags containing $C$ are \[\{\mathcal{O}((x,v))\in V^{ss}_{k}/G_{k}: f^{ss}(\mathcal{O}(x))=(K\subset C) \text{ for } \mathcal{O}(x)\in X^{ss}_{k}/G_{k}, \text{ dim}(K)=2\}.\]
        \item Let $Q$ be a fixed quaternion. The orbits which correspond to the $4$-flags containing $Q$ are \[\{ \mathcal{O}((x,v))\in V^{ss}_{k}/G_{k}:\text{Stab}((x,v))\simeq SL_{1}(Q)\}.\]
        \item Finally, we take any quadratic algebra $K=k(\sqrt{-i})$ for some $i\in k^{\times}$. Then the $4$-flags containing $K$ are represented by the orbits \[\{\mathcal{O}((x,v))\in V^{ss}_{k}/G_{k}: f_{1}(x)=ia^{2}, \text{ for some }a\in k^{\times}\}.\]
    \end{enumerate}
\end{remark}

\vspace{2 mm}

\subsection{Cohomological interpretation} Let $k\subset K\subset Q\subset C$ be a maximal flag of composition algebras over the field $k$. Then $H_{1}=\text{Aut}(C,K)\simeq SU(h)\rtimes \mathbb{Z}_{2}$ for some trivial discriminant $K/k$-hermitian form $h$ of rank $3$, and $H_{2}=\text{Aut}(C,Q)=SL_{1}(Q)\rtimes \text{Aut}(Q)$. Let us take $H=H_{1}\cap H_{2}$. So, $H$ is the group of all automorphisms of $C$ which map the subalgebras $Q$ and $K$ onto themselves and the cohomology set $H^{1}(k,H)$ is in bijection with the set of all isomorphism classes of maximal $4$-flags of composition algebras defined over $k$. This leads us to the following result.

\begin{theorem}
    We have a surjection from the orbit space $V^{ss}_{k}/G_{k}$ onto the cohomology set $H^{1}(k,H)$, i.e., each element in $H^{1}(k,H)$ is represented by some orbit in $V^{ss}_{k}/G_{k}$.
\end{theorem}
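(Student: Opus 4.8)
The plan is to build the surjection $V^{ss}_{k}/G_{k}\to H^{1}(k,H)$ by composing the classification map $f_{1}^{ss}$ of Theorem \ref{CD} with the identification of $H^{1}(k,H)$ with isomorphism classes of maximal $4$-flags. Concretely, I would first fix a split reference flag $k\subset \overline{k}\times \overline{k}\subset M_{2}\subset Zorn$ over the algebraic closure and argue that, since over $\overline{k}$ every maximal flag of composition algebras is isomorphic to this one, the pointed set $H^{1}(k,H)$ classifies exactly the $k$-forms of this flag, which are the same thing as $k$-isomorphism classes of maximal $4$-flags $(k\subset K\subset Q\subset C)$ of composition algebras over $k$; this is the standard Galois-descent dictionary (\cite{SJP}, Chapter III, Section $1$), using that $H=\mathrm{Aut}(C,K)\cap \mathrm{Aut}(C,Q)$ is the automorphism group of the reference flag. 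Having this bijection $H^{1}(k,H)\longleftrightarrow \{$iso. classes of maximal $4$-flags$\}$, the theorem reduces to exactly the statement of Theorem \ref{CD}, namely that $f_{1}^{ss}$ surjects onto the set of isomorphism classes of maximal $4$-flags.

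Thus the only work is to transport surjectivity across the bijection. Given a class $\xi\in H^{1}(k,H)$, let $(k\subset K\subset Q\subset C)$ be the maximal $4$-flag it corresponds to; by Theorem \ref{CD} there is an orbit $\mathcal{O}(\tilde x)\in V^{ss}_{k}/G_{k}$ with $f_{1}^{ss}(\mathcal{O}(\tilde x))=(k\subset K\subset Q\subset C)$, and then the composed map sends $\mathcal{O}(\tilde x)$ to $\xi$. To make the composite well defined one checks that two orbits with isomorphic associated flags go to the same cohomology class — but this is immediate because the intermediate object between the orbit space and $H^{1}(k,H)$ is precisely the \emph{set of isomorphism classes} of flags, so nothing is lost. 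It is worth spelling out, using the description of $H$, that a flag isomorphism $\phi\colon(K\subset Q\subset C)\to(K'\subset Q'\subset C')$ corresponds to twisting the cocycle by a coboundary, hence fixes the class in $H^{1}(k,H)$; this is the content of the remark in the excerpt that $H^{1}(k,H)$ is in bijection with isomorphism classes of maximal $4$-flags.

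I would organize the proof in three short steps: (1) identify $H$ with the automorphism group of the reference split flag and invoke the twisting bijection $H^{1}(k,H)\cong\{\overline k/k\text{-forms of the split flag}\}=\{$iso. classes of maximal $4$-flags over $k\}$; (2) recall from Theorem \ref{CD} the surjection $f_{1}^{ss}\colon V^{ss}_{k}/G_{k}\twoheadrightarrow\{$iso. classes of maximal $4$-flags$\}$; (3) compose the two maps and read off surjectivity. One subtlety to address carefully is that $H$ need not be connected (indeed $H_{1}\cong SU(h)\rtimes\mathbb{Z}_{2}$ and $H_{2}=SL_{1}(Q)\rtimes\mathrm{Aut}(Q)$), so $H^{1}(k,H)$ is only a pointed set rather than a group; the descent argument still applies verbatim since it only uses that $H$ is the stabilizer, in the automorphism group of $Zorn(\overline k)$, of the pair $(M_{2},\,\overline k\times\overline k)$. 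The main obstacle, and really the only nontrivial point, is pinning down this last identification — that the group $H=\mathrm{Aut}(C,K)\cap\mathrm{Aut}(C,Q)$ is exactly the automorphism group of the flag and that its $\overline k$-form is the split flag — so that the cohomological classification is literally the flag classification already established; everything else is formal composition of maps already constructed in Theorems \ref{KC} and \ref{CD}.
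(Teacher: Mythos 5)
Your proposal is correct and takes essentially the same approach as the paper: it composes the bijection between $H^{1}(k,H)$ and the set of isomorphism classes of maximal $4$-flags (which the paper asserts in the paragraph just before the theorem and which you justify by twisting/Galois descent) with the surjection $f_{1}^{ss}$ of Theorem \ref{CD}. The paper's proof is the one-liner that it is immediate from Theorem \ref{CD}; you simply spell out the descent identification the paper takes as given.
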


\begin{proof}
    The proof is immediate from Theorem \ref{CD}.
\end{proof}

\section{Some prehomogeneous vector spaces}\label{5} In this section, we discuss the orbit decompositions of the semi-stable sets for the prehomogeneous vector spaces $(Sp_{6}\times GL_{1}^{2}, V)$ and $(GSp_{6}\times GL_{1}^{2},V)$. In each case, we get that every orbit from the orbit space of the semi-stable set corresponds to a unique $4$-flag of composition algebras, as one can expect from the results discussed in Section \ref{4}.

\subsection{The PV $(\text{Sp}_{\text{6}}\times \text{GL}_{\text{1}}^{\text{2}},\text{V})$} We consider the $Sp_{6}$-representation $V=\wedge^{3}V_{6}=X\oplus V_{6}$ and define the action of $GL_{1}^{2}$ on $V$ by \[(a,b).(x,v)=(ax,bv), \text{ for } a,b\in GL_{1}, x\in X,v\in V_{6}.\] This makes $V$ a $(Sp_{6}\times GL_{1}^{2})$-representation, where the $Sp_{6}$-action is the same as defined earlier and we still have $V=X\oplus V_{6}$ as a $(Sp_{6}\times GL_{1}^{2})$-representation. We have the polynomials $f_{1}$ and $f_{2}$ defined on $V$, which are relative invariants with respect to the characters $\chi_{1}((g,a,b))=a^{4}$ and $\chi_{2}((g,a,b))=a^{2}b^{2}$, respectively, where $g\in Sp_{6}$ and $a,b\in GL_{1}$ (see Section \ref{3}).

\begin{proposition}\label{SpPV}
    The set $V^{ss}=\{\tilde{x}=(x,v)\in V=X\oplus V_{6}: f_{1}(x)\neq 0, f_{2}(\tilde{x})\neq 0 \}$ is a single $(Sp_{6}\times GL_{1}^{2})$-orbit in $V$ over $\overline{k}$. 
\end{proposition}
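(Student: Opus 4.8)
The plan is to show that $V^{ss}$ is a single $(Sp_6\times GL_1^2)$-orbit over $\overline{k}$ by reducing an arbitrary point to a fixed normal form. First I would take $\tilde{x}=(x,v)\in V^{ss}$ over $\overline{k}$, so $f_1(x)=i\neq 0$ and $f_2(\tilde x)=j\neq 0$. Working over the algebraically closed field $\overline{k}$, we have $-i\in\overline{k}^{\times 2}$, so by the computations underlying Theorem \ref{KC} (see \cite{IJ}) the $X$-component $x$ is $Sp_6(\overline{k})$-equivalent to $-e_1e_2e_3-2\sqrt{-i}\,e_4e_5e_6$; moreover, by scaling with the first $GL_1$-factor we may further normalize the coefficient, so $x$ becomes $Sp_6(\overline{k})\times GL_1$-equivalent to $x_0:=-e_1e_2e_3-e_4e_5e_6$ (replace $2\sqrt{-i}$ by $1$ via $a\in GL_1$ acting as $ax$; note $\psi(x_0)=0$ so $x_0\in X$, and $f_1(x_0)\neq 0$).

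Next, with $x$ brought to $x_0$, I would use the residual stabilizer $\mathrm{Stab}(x_0)\simeq SL_3$ inside $Sp_6$ (embedded as $\mathrm{diag}(A,(A^t)^{-1})$, $A\in SL_3$, exactly as in Proposition \ref{SL}) to act on the $V_6$-component $v$. By Proposition \ref{inv2} the surviving invariant is $f_2(\tilde x)=-(v_1v_4+v_2v_5+v_3v_6)$ (after the normalization $y_0=1$), a split quadratic form $q$ on $V_6$ preserved by this $SL_3$ up to the constraint. The argument in the proof of Proposition \ref{SL} already shows that under this $SL_3$-action any $v$ with $q(v)\neq 0$ can be moved to $(q(v),0,0,1,0,0)^t$, hence by Theorem \ref{fiber orbit} the $SL_3$-orbits on $\{v: q(v)\neq 0\}$ are parametrized by $a=q(v)\in\overline{k}^{\times}$. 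Finally, applying the second $GL_1$-factor, $(1,b).(x_0,v)=(x_0,bv)$ scales $q(bv)=b^2 q(v)$, and since $\overline{k}^{\times}=\overline{k}^{\times 2}$ we can choose $b$ so that $q(bv)=1$; thus $\tilde x$ is $(Sp_6\times GL_1^2)(\overline{k})$-equivalent to the single point $(x_0,(1,0,0,1,0,0)^t)$, which has $f_1\neq 0$ and $f_2\neq 0$, so lies in $V^{ss}$.

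The only point requiring genuine care — the main obstacle — is bookkeeping the two $GL_1$-factors against the relative invariants: one must verify that scaling $x$ by $a$ multiplies $f_1$ by $a^4$ and $f_2$ by $a^2$, while scaling $v$ by $b$ multiplies $f_2$ by $b^2$ and leaves $f_1$ untouched (which is precisely the content of the character computation $\chi_1((g,a,b))=a^4$, $\chi_2((g,a,b))=a^2b^2$ recalled before Proposition \ref{inv2}), and to check that after normalizing $f_1(x_0)$ to a convenient value the second scaling still has enough freedom to normalize $q(v)$; over $\overline{k}$ this is automatic since every nonzero scalar is a square and a fourth power. Everything else is a direct transcription of the $SL_3$-reduction already carried out in Proposition \ref{SL} and Theorem \ref{fiber orbit}, now performed over $\overline{k}$ where all the quadratic-extension case distinctions collapse.
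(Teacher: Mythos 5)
Your proof is correct and takes essentially the same approach as the paper: reduce to the normal form from Theorem \ref{fiber orbit} and Proposition \ref{SL}, then use the two $GL_1$-factors together with residual $Sp_6$-diagonals to normalize the remaining scalar parameters, all of which collapse over $\overline{k}$ since every element is a square and a fourth power. One small imprecision to flag: the parenthetical ``replace $2\sqrt{-i}$ by $1$ via $a\in GL_{1}$ acting as $ax$'' cannot be done by the $GL_1$-scalar alone, since $a\cdot(-e_{1}e_{2}e_{3}-2\sqrt{-i}\,e_{4}e_{5}e_{6})$ rescales both coefficients simultaneously; you also need a diagonal $Sp_{6}$-element of the form $\mathrm{diag}(\lambda I_{3},\lambda^{-1}I_{3})$ (compare the paper's element $g_{2}$) so that the combined transformation can fix the two coefficients independently — but the $(Sp_{6}\times GL_{1})$-equivalence you actually assert is correct, so the argument goes through.
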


\begin{proof}
As we can see in Theorem \ref{fiber orbit}, any element $\tilde{x}=(x,v)\in V^{ss}$ with $f_{1}(x)=i\in k^{\times}$ can be assumed to be of the form \[\tilde{x}=(-e_{1}e_{2}e_{3}-2(\sqrt{-i})e_{4}e_{5}e_{6},(a,0,0,1,0,0)^{t}),\] up to $G$-equivalence over $k(\sqrt{-i})$, where $a$ is a non-zero scalar. If we take the element \[g=(\text{diag}((\sqrt{a})^{-1},\sqrt{a},1,\sqrt{a},(\sqrt{a})^{-1},1),1,1)\in (Sp_{6}\times GL_{1}^{2})(\overline{k}),\] then we have \[g.\tilde{x}= (-e_{1}e_{2}e_{3}-2(\sqrt{-i})e_{4}e_{5}e_{6},(\sqrt{a},0,0,\sqrt{a},0,0)^{t}).\] Now, we consider the element \[g_{1}=(I_{6},1,(\sqrt{a})^{-1})\in (Sp_{6}\times GL_{1}^{2})(\overline{k}),\] and we get \[g_{1}.(g.\tilde{x})=(-e_{1}e_{2}e_{3}-2(\sqrt{-i})e_{4}e_{5}e_{6},(1,0,0,1,0,0)^{t}).\] Finally, we take the element \[g_{2}=(\text{diag}(1,1,(-i)^{\frac{1}{4}},1,1,(-i)^{-\frac{1}{4}}), (-i)^{-\frac{1}{4}},1)\in (Sp_{6}\times GL_{1}^{2})(\overline{k}),\] and we have \[g_{2}.(g_{1}.(g.\tilde{x}))=(-e_{1}e_{2}e_{3}-2e_{4}e_{5}e_{6},(1,0,0,1,0,0)^{t}).\] So any element $\tilde{x}\in V^{ss}$ is equivalent to the above element under the $(Sp_{6}\times GL_{1}^{2})(\overline{k})$-action. Hence, $V^{ss}$ is a single orbit over $\overline{k}$.
\end{proof}

The above result tells us that $(Sp_{6}\times GL_{1}^{2},V)$ is a prehomogeneous vector space with $f_{1}f_{2}$ as relative invariant.

\vspace{2 mm}

\begin{remark}
    If we take the irreducible component $X\subset V$ and restrict the above action of $(Sp_{6}\times GL_{1}^{2})$, then it is enough to consider the action of $(Sp_{6}\times GL_{1})$ as the other $GL_{1}$ does not act on $X$. We can check that $(Sp_{6}\times GL_{1},X)$ is a PV with $f_{1}$ as a relative invariant (see \cite{SK}, \cite{PS}), and the orbit decompositions in this PV have been discussed in detail in (\cite{PS}).
\end{remark}

\vspace{2 mm}

\noindent \textbf{The identity components of the stabilizers:} Let $\tilde{x}=(x,v)$ be any point in the semi-stable set $V^{ss}_{k}$, i.e., $f_{1}(x)=i\neq 0$ and $f_{2}(\tilde{x})\neq 0$. As $Sp_{6}(k)\subset (Sp_{6}\times GL_{1}^{2})(k)$ and the stabilizer of $\tilde{x}\in V^{ss}_{k}$ in $Sp_{6}(k)$ is isomorphic to $SL_{1}(Q)$ for some quaternion algebra $Q$, we get that \[SL_{1}(Q)\subset \text{Stab}(\tilde{x})\subset (Sp_{6}\times GL_{1}^{2})(k).\] Again, $Sp_{6}\times GL_{1}^{2}$ has the Zariski-dense orbit $V^{ss}\subset V$ over $\overline{k}$. So, we must have \[\text{dim}(\text{Stab}(\tilde{x}))=\text{dim}(Sp_{6}\times GL_{1}^{2})-\text{dim}(V)=21+2-20=3.\] But $\text{dim}(SL_{1}(Q))=3$ as well. Hence, we get that the identity component of $\text{Stab}(\tilde{x})\subset (Sp_{6}\times GL_{1}^{2})(k)$ is $SL_{1}(Q)$. Similarly, we can check that the identity component of the stabilizer of $x\in X^{ss}_{k}$ inside $(Sp_{6}\times GL_{1}^{2})(k)$ is isomorphic to $SU(h)$, for some trivial discriminant $k(\sqrt{-i})/k$-hermitian form $h$ of rank $3$, using the fact that $(Sp_{6}\times GL_{1},X)$ is a PV.

\begin{theorem}\label{SpSS}

Each orbit in the orbit space $V^{ss}_{k}/(Sp_{6}\times GL_{1}^{2})(k)$ represents a unique isomorphism class of maximal flags of composition algebras over $k$, and all such flags up to isomorphism can be obtained in this way.
\end{theorem}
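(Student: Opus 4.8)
The plan is to reduce Theorem \ref{SpSS} to the analysis already carried out for the $Sp_{6}$-action in Theorem \ref{CD}, using the fact that $Sp_{6}(k)$ sits inside $(Sp_{6}\times GL_{1}^{2})(k)$ and that the enlarged group only rescales the two invariants $f_{1}$ and $f_{2}$. First I would observe that since the $GL_{1}^{2}$-factor multiplies $f_{1}$ by $a^{4}$ and $f_{2}$ by $a^{2}b^{2}$, and both $a,b$ range over all of $k^{\times}$, the values $(f_{1}(x),f_{2}(\tilde{x}))\in (k^{\times})^{2}$ can be moved to any other prescribed pair $(i',j')$ with $-i'$ in the same square class as $-i$; in particular the quadratic algebra $K=k(\sqrt{-i})$ (which only depends on $-i$ modulo $k^{\times 2}$) is preserved, and for fixed $K$ every pair of nonzero invariant values is reachable. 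Hence one may first apply an element of $GL_{1}^{2}(k)$ to bring $\tilde{x}$ to a point with $f_{1}(x)=i$ equal to a chosen representative and $f_{2}(\tilde{x})$ equal to a chosen representative, and then invoke Theorem \ref{CD}: the $Sp_{6}(k)$-orbit already determines a unique maximal $4$-flag $(k\subset K\subset Q\subset C)$, so the larger orbit does too, and conversely any such flag is realized.

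The key steps, in order, are: (i) define the candidate map $g^{ss}\colon V^{ss}_{k}/(Sp_{6}\times GL_{1}^{2})(k)\to \{\text{iso.\ classes of maximal }4\text{-flags}\}$ by sending the $(Sp_{6}\times GL_{1}^{2})(k)$-orbit of $\tilde{x}$ to $f_{1}^{ss}(\mathcal{O}_{Sp_{6}}(\tilde{x}))$, where $f_{1}^{ss}$ is the surjection from Theorem \ref{CD}; (ii) check well-definedness, i.e.\ that if $\tilde{x}$ and $\tilde{x}'$ lie in the same $(Sp_{6}\times GL_{1}^{2})(k)$-orbit then their $Sp_{6}(k)$-orbits yield isomorphic flags --- this is where the action of the torus must be controlled, and it follows because scaling $x\mapsto ax$ leaves $-f_{1}(x)=-a^{4}i$ in the same square class (so $K$ is unchanged), because the octonion $C$ attached to the $X$-component depends only on the isometry class of the associated ternary hermitian form up to scaling its representative (Theorem \ref{KC} and its remarks), and because the stabilizer of $\tilde{x}$ in $Sp_{6}(k)$, namely $SL_{1}(Q)$, is the identity component of the stabilizer in the bigger group (established in the paragraph on identity components just before the theorem), so $Q$ is intrinsic to the orbit; (iii) surjectivity, which is immediate since $Sp_{6}(k)$-orbits already surject by Theorem \ref{CD} and each $Sp_{6}(k)$-orbit is contained in a $(Sp_{6}\times GL_{1}^{2})(k)$-orbit; and (iv) the ``unique isomorphism class'' assertion, i.e.\ injectivity onto flags up to the natural fibers --- more precisely the statement only claims that \emph{each} orbit represents a well-defined flag and \emph{all} flags arise, so once (ii) and (iii) are in place we are done.

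The main obstacle I expect is step (ii): verifying that enlarging the group by the $GL_{1}^{2}$-torus does not collapse distinct flags, and in particular that the quaternion $Q$ --- which in the $Sp_{6}$-picture was detected as the twisted form of $SL_{2}$ occurring as $\mathrm{Stab}(\tilde{x})$ (Remark \ref{quat}, Proposition \ref{SL}) --- is unaffected by the torus action. The cleanest way to handle this is the dimension count already in the excerpt: $\dim(Sp_{6}\times GL_{1}^{2})-\dim V=3=\dim SL_{1}(Q)$ forces $SL_{1}(Q)$ to be exactly the identity component of the stabilizer in $(Sp_{6}\times GL_{1}^{2})(k)$, so $Q$ is an invariant of the $(Sp_{6}\times GL_{1}^{2})(k)$-orbit; combined with the fact that $K$ and $C$ depend only on the square class of $-f_{1}$ and on the scaling class of the hermitian form (hence are likewise orbit invariants), this pins down the flag. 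A secondary, more routine point is checking that for a prescribed flag one can actually choose $a,b\in k^{\times}$ making both invariants land on the values used in the proof of Theorem \ref{CD}; since $f_{2}$ on the relevant slice is a nondegenerate (isotropic, hence universal) quadratic form in the $V_{6}$-coordinates, and $f_{1}$ scales by a fourth power, this is a short computation rather than a genuine difficulty.
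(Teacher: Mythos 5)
Your overall strategy matches the paper's: define the map on $(Sp_{6}\times GL_{1}^{2})(k)$-orbits by factoring $f_{1}^{ss}$ through, then verify well-definedness by checking that $K$, $C$, and $Q$ are each invariant under the extra $GL_{1}^{2}$-action. Your treatment of $K$ (square class of $-f_{1}$ is preserved) and of $Q$ (via the identity component of the stabilizer, which is a conjugation-invariant of the orbit) is correct and agrees with the paper.

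There is, however, a gap in the step for $C$. You assert that ``the octonion $C$ attached to the $X$-component depends only on the isometry class of the associated ternary hermitian form up to scaling its representative,'' and later that $C$ depends only on ``the scaling class of the hermitian form.'' This is not true in general: scaling a ternary $K/k$-hermitian form $\text{diag}(y_{1},y_{2},y_{3})$ by a scalar $a\in k^{\times}$ changes its discriminant class by $a^{3}\equiv a\pmod{N(K/k)}$, and the resulting octonion can change when $a$ is not a $K/k$-norm. What actually saves the argument is a computation the paper carries out explicitly: writing $x$ in the standard form with coefficients $(y_{0},y_{1},y_{2},y_{3})$ and acting by $(a,b)\in GL_{1}^{2}(k)$ sends $x$ to $ax$, which is no longer in standard form, but conjugating by $g_{1}=\text{diag}(a^{-1},1,1,a,1,1)\in Sp_{6}(k)$ returns it to the standard shape with hermitian form $\text{diag}(y_{1},a^{2}y_{2},a^{2}y_{3})$. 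Since $a^{2}=N_{K/k}(a)$ is always a norm, this form is isometric to $\text{diag}(y_{1},y_{2},y_{3})$, and only then does invariance of $C$ follow. Your proof should include this normalization step (or at least the observation that the induced scaling of the diagonal entries is by squares, hence norms), as the blanket ``up to scaling'' claim would not survive scrutiny for an arbitrary scalar.
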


\begin{proof}

  If we can define a surjection $f^{ss}_{2}$ from the orbit space $V^{ss}_{k}/(Sp_{6}\times GL_{1}^{2})(k)$ onto the set of all isomorphism classes of maximal flags of composition algebras over $k$, then we are done.

    \vspace{2 mm}
    
    Let $\tilde{x}=(x,v)\in V^{ss}_{k}$ and $g=(a,b)\in GL_{1}^{2}(k)$. Then $g.\tilde{x}=(ax,bv)\in V^{ss}_{k}$ and $f_{1}(ax)=a^{4}f_{1}(x)$. We will show that the maximal flags corresponding to the $G_{k}$-orbits of $\tilde{x}$ and $g.\tilde{x}$ (determined by $f^{ss}_{1}$ in Theorem \ref{CD}) are the same. We have \[k(\sqrt{-f_{1}(x)})=k(\sqrt{-a^{4}f_{1}(x)}).\] So the quadratic algebras are the same in both the $4$-flags $f_{1}^{ss}(\mathcal{O}(\tilde{x}))$ and $f^{ss}_{1}(\mathcal{O}(g.\tilde{x}))$, where $\mathcal{O}(\tilde{x}),\mathcal{O}(g.\tilde{x})\in V^{ss}_{k}/G_{k}$. We may assume that \[x=-e_{1}e_{2}e_{3}-y_{0}e_{4}e_{5}e_{6}+y_{1}e_{1}e_{5}e_{6}+y_{2}e_{4}e_{2}e_{6}+y_{3}e_{4}e_{5}e_{3},\] for $y_{1},y_{2},y_{3}\in k^{\times}$ and $y_{1}y_{2}y_{3}=\frac{1}{4}y^{2}_{0}+f_{1}(x)$. Then the trivial discriminant $k(\sqrt{-f_{1}(x)})/k$-hermitian form $h$ given by the matrix $\text{diag}(y_{1},y_{2},y_{3})$ determines the octonion algebra in the $4$-flag $f^{ss}_{1}(\mathcal{O}(\tilde{x}))$ (see Theorem \ref{CD}, \ref{KC}). Now we have \[ax=-ae_{1}e_{2}e_{3}-ay_{0}e_{4}e_{5}e_{6}+ay_{1}e_{1}e_{5}e_{6}+ay_{2}e_{4}e_{2}e_{6}+ay_{3}e_{4}e_{5}e_{3}.\] Let us take the element $g_{1}=\text{diag}(a^{-1},1,1,a,1,1)\in G_{k}$, and we get \[g_{1}.(ax)=-e_{1}e_{2}e_{3}-a^{2}y_{0}e_{4}e_{5}e_{6}+y_{1}e_{1}e_{5}e_{6}+a^{2}y_{2}e_{4}e_{2}e_{6}+a^{2}y_{3}e_{4}e_{5}e_{3}.\] As we can see, the $k(\sqrt{-f_{1}(x)})/k$-hermitian form corresponding to the above element is given by the matrix $\text{diag}(y_{1},a^{2}y_{2},a^{2}y_{3})$, which is isometric $h$. Hence, the octonions are also the same in both $f^{ss}_{1}(\mathcal{O}(\tilde{x}))$ and $f^{ss}_{1}(\mathcal{O}(g.\tilde{x}))$. Finally, the connected components of the stabilizers of $\tilde{x}$ and $g.\tilde{x}$ are isomorphic over $k$ to $SL_{1}(Q)$ for some unique quaternion algebra $Q$, and so the quaternions in $f^{ss}_{1}(\mathcal{O}(\tilde{x}))$ and $f^{ss}_{1}(\mathcal{O}(g.\tilde{x}))$ must be the same (see Theorem \ref{CD}). Therefore, \[f^{ss}_{1}(\mathcal{O}(\tilde{x}))=f^{ss}_{1}(\mathcal{O}(g.\tilde{x})),\forall g\in GL_{1}^{2}(k),\] i.e., the $GL_{1}^{2}(k)$-action on $V^{ss}_{k}$ identifies two $ G_{k}$-orbits in $V^{ss}_{k}$ only if they are in the same fiber of the surjection $f_{1}^{ss}$. So, the required surjection $f^{ss}_{2}$ on $V^{ss}_{k}/(Sp_{6}\times GL_{1})^{2}(k)$ will be the map induced by $f^{ss}_{1}$, i.e., \[ f^{ss}_{2}(\mathcal{O}_{1}(\tilde{x}))=f^{ss}_{1}(\mathcal{O}(\tilde{x})), \text{ for }\tilde{x}\in V^{ss}_{k},\mathcal{O}_{1}(\tilde{x})\in V^{ss}_{k}/(Sp_{6}\times GL_{1})^{2}(k),\] where $\mathcal{O}(\tilde{x})\in V^{ss}_{k}/G_{k}$ is the $G_{k}$-orbit of $\tilde{x}$. Clearly, $f^{ss}_{2}$ is well defined. And it is surjective as $f^{ss}_{1}$ is so.
\end{proof}

\subsection{The PV ($\text{GSp}_{\text{6}}\times \text{GL}_{\text{1}}^{\text{2}}, \text{V}$)} Now we consider the general symplectic group $GSp_{6}$ which has been defined in Section \ref{3}. This group has a natural action on $V=\wedge^{3}V_{6}$, induced by its action on $V_{6}$. The decomposition of $V$ into irreducible components remains the same, i.e., $V=X\oplus V_{6}$ as $GSp_{6}$-representation. We define the action of $GL_{1}^{2}$ on $V$ in the same way as in the previous case. This makes $V$ into a $(GSp_{6}\times GL_{1}^{2})$-representation. It is very easy to see that $GSp_{6}\simeq Sp_{6}\rtimes H \simeq Sp_{6}\rtimes GL_{1}$ where \[GL_{1}\simeq H= \biggl\{h_{a}= \left(\begin{array}{cc}
aI_{3} & 0 \\
0 & I_{3}
\end{array}\right) : a\in GL_{1} \biggr\}\subset GSp_{6}.\] In addition, $f_{1}$ and $f_{2}$ are relative invariants for this representation, and the corresponding characters can be calculated from Proposition \ref{P2.1} (see Section \ref{3}). In particular, for the elements in $H$ the characters are defined by, respectively, \[\chi_{1}(h_{a})=a^{6}\text{ and } \chi_{2}(h_{a})=a^{4}, \text{ for } h_{a}\in H, a\in GL_{1}.\] This determines the characters $\chi_{1}$ and $\chi_{2}$ on the entire group $(GSp_{6}\times GL_{1}^{2})$, from the previous case.

\begin{proposition}
    The set $V^{ss}=\{\tilde{x}=(x,v)\in V: f_{1}(x)\neq 0, f_{2}(\tilde{x})\neq 0 \}$ is a single $(GSp_{6}\times GL_{1}^{2})$-orbit in $V$ over $\overline{k}$. 
\end{proposition}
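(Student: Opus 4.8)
The plan is to deduce this at once from Proposition \ref{SpPV}, exploiting the inclusion $Sp_{6}\times GL_{1}^{2}\subset GSp_{6}\times GL_{1}^{2}$. First I would record that $f_{1}$ and $f_{2}$ are relative invariants for the $(GSp_{6}\times GL_{1}^{2})$-action with respect to the characters $\chi_{1},\chi_{2}$ determined above (in particular $\chi_{1}(h_{a})=a^{6}$ and $\chi_{2}(h_{a})=a^{4}$ on $H\subset GSp_{6}$). Thus for any $g\in GSp_{6}\times GL_{1}^{2}$ and any $\tilde{x}\in V$ one has $f_{i}(g.\tilde{x})=\chi_{i}(g)f_{i}(\tilde{x})$ with $\chi_{i}(g)\in\overline{k}^{\times}$, so $f_{i}(g.\tilde{x})\neq 0$ if and only if $f_{i}(\tilde{x})\neq 0$. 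Hence $V^{ss}$ is stable under $GSp_{6}\times GL_{1}^{2}$.

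Next, since $Sp_{6}\times GL_{1}^{2}$ is a subgroup of $GSp_{6}\times GL_{1}^{2}$ and, by Proposition \ref{SpPV}, $V^{ss}$ is already a single $(Sp_{6}\times GL_{1}^{2})$-orbit over $\overline{k}$, the $(GSp_{6}\times GL_{1}^{2})(\overline{k})$-orbit of any $\tilde{x}\in V^{ss}$ contains the full set $V^{ss}$; combined with the stability just observed, this orbit is exactly $V^{ss}$. Therefore $V^{ss}$ is a single $(GSp_{6}\times GL_{1}^{2})$-orbit over $\overline{k}$, and $(GSp_{6}\times GL_{1}^{2},V)$ is a prehomogeneous vector space with relative invariant $f_{1}f_{2}$.

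If one prefers a self-contained computation, the alternative is to start from the normal form $(-e_{1}e_{2}e_{3}-2(\sqrt{-i})e_{4}e_{5}e_{6},(a,0,0,1,0,0)^{t})\in V^{ss}$ provided by Theorem \ref{fiber orbit} and run the reduction of Proposition \ref{SpPV}, now with the additional freedom of the similitude factor $h_{a}\in H\subset GSp_{6}$ to rescale $i$, reaching the fixed representative $(-e_{1}e_{2}e_{3}-2e_{4}e_{5}e_{6},(1,0,0,1,0,0)^{t})$. This introduces no new difficulty; indeed there is essentially no obstacle in this proposition, the only point requiring care being the verification that the similitude factor acts on $f_{1}$ and $f_{2}$ purely through characters, so that the reductions stay inside $V^{ss}$ and $V^{ss}$ is genuinely $(GSp_{6}\times GL_{1}^{2})$-stable.
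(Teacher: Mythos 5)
Your proof is correct and follows exactly the same route as the paper: use the inclusion $Sp_{6}\times GL_{1}^{2}\subset GSp_{6}\times GL_{1}^{2}$ together with Proposition \ref{SpPV}, and note that $f_{1},f_{2}$ remain relative invariants for the larger group so that $V^{ss}$ is stable under it. The paper states this in one sentence; you merely spell out the two directions (orbit contains $V^{ss}$, orbit contained in $V^{ss}$) more explicitly.
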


\begin{proof}
    The proof follows from Proposition \ref{SpPV}, as $Sp_{6}\times GL_{1}^{2}\subset GSp_{6}\times GL_{1}^{2}$ and $f_{1},f_{2}$ are relative invariants under the action of $GSp_{6}\times GL_{1}^{2}$.
\end{proof}

So, we find that $(GSp_{6}\times GL_{1}^{2},V)$ is a PV with the relative invariant $f_{1}f_{2}$ defined on $V$.

\vspace{2 mm}

\begin{remark}
    In this case also, if we consider the irreducible component $X\subset V$, it is enough to consider the action of $(GSp_{6}\times GL_{1})$ on $X$; and similar to the earlier case, $(GSp_{6}\times GL_{1},X)$ is again an irreducible PV with $f_{1}$ as a relative invariant (see \cite{PS}, \cite{YA1}). The orbit decompositions of this PV have been discussed in detail in (\cite{PS}).
\end{remark}

\vspace{2 mm}

\noindent \textbf{The identity components of the stabilizers:} Let $\tilde{x}=(x,v)\in V^{ss}_{k}$. Then we can check by direct computations that the stabilizer of $\tilde{x}$ in $(GSp_{6}\times GL_{1}^{2})(k)$ contains the subgroup \[H_{1}= SL_{1}(Q)\times \{ (t.I_{6},t^{-3},t^{-1}): t\in GL_{1}(k) \},\] where $SL_{1}(Q)$ is the stabilizer of $\tilde{x}$ in $Sp_{6}(k)$ (see \cite{YA1}, Proposition $4.5$ also). Again, $(GSp_{6}\times GL_{1}^{2},V)$ is a PV and $V^{ss}$ is a Zariski-dense orbit in $V$ over $\overline{k}$. So we must have \[\text{dim}(\text{Stab}(\tilde{x}))=\text{dim}(GSp_{6}\times GL_{1}^{2})-\text{dim}(V)=24-20=4.\] Therefore, the identity component of $\text{Stab}(\tilde{x})\subset (GSp_{6}\times GL_{1}^{2})(k)$ is $H_{1}$ itself. Similarly, we can check that the identity component of the stabilizer of $x\in X^{ss}_{k}$ in $(GSp_{6}\times GL_{1}^{2})(k)$ is isomorphic to $SU(h)\times GL_{1}$, for some trivial discriminant $k(\sqrt{-f_{1}(x)})/k$-hermitian form $h$ of rank $3$, using the fact that $(GSp_{6}\times GL_{1},X)$ is a PV.

\begin{theorem}
Each point in the orbit space $V^{ss}_{k}/(GSp_{6}\times GL_{1}^{2})(k)$ corresponds to a unique isomorphism class of maximal flags of composition algebras over $k$, and all such maximal flags up to isomorphism can be obtained from these orbits.
\end{theorem}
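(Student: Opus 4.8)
The plan is to mimic the proof of Theorem \ref{SpSS} almost verbatim, replacing the group $Sp_{6}\times GL_{1}^{2}$ by $GSp_{6}\times GL_{1}^{2}$ throughout. Since $Sp_{6}\times GL_{1}^{2}$ is a subgroup of $GSp_{6}\times GL_{1}^{2}$, every $(Sp_{6}\times GL_{1}^{2})(k)$-orbit in $V^{ss}_{k}$ is contained in a $(GSp_{6}\times GL_{1}^{2})(k)$-orbit, so the larger group simply glues together some of the fibers of $f^{ss}_{2}$. Thus it suffices to show: (i) the extra automorphisms coming from $H\simeq GL_{1}\subset GSp_{6}$ (the elements $h_{a}=\mathrm{diag}(aI_{3},I_{3})$) do not change the isomorphism class of the $4$-flag attached to an orbit, and (ii) conversely, every maximal $4$-flag is still hit. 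Part (ii) is immediate from the surjectivity of $f^{ss}_{1}$ (Theorem \ref{CD}), since $(GSp_{6}\times GL_{1}^{2})$-orbits are unions of $Sp_{6}(k)$-orbits. So the real content is (i), and the induced map $f^{ss}_{3}(\mathcal{O}_{2}(\tilde{x}))=f^{ss}_{1}(\mathcal{O}(\tilde{x}))$ is then well defined and surjective.

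For part (i) I would argue exactly along the lines of the proof of Theorem \ref{SpSS}, checking the three invariants of the flag separately. \emph{Quadratic algebra:} acting by $h_{a}\in H$ scales $f_{1}(x)$ by $\chi_{1}(h_{a})=a^{6}=(a^{3})^{2}$ and $f_{2}(\tilde{x})$ by $\chi_{2}(h_{a})=a^{4}=(a^{2})^{2}$, so $-f_{1}(x)$ changes only by a square and $k(\sqrt{-f_{1}(x)})$ is unchanged; the $GL_{1}^{2}(k)$-action was already handled in Theorem \ref{SpSS}. \emph{Octonion algebra:} writing $x=-e_{1}e_{2}e_{3}-y_{0}e_{4}e_{5}e_{6}+y_{1}e_{1}e_{5}e_{6}+y_{2}e_{4}e_{2}e_{6}+y_{3}e_{4}e_{5}e_{3}$, one computes the effect of $h_{a}$ on $x$ and then applies a suitable diagonal element of $Sp_{6}(k)$ (as in the proof of Theorem \ref{SpSS}, where $g_{1}=\mathrm{diag}(a^{-1},1,1,a,1,1)$ was used) to bring the result back to the same normal form with a hermitian matrix $\mathrm{diag}(y_{1}',y_{2}',y_{3}')$ that is $k(\sqrt{-f_{1}(x)})/k$-isometric to $\mathrm{diag}(y_{1},y_{2},y_{3})$; this forces the octonions to agree by Theorem \ref{KC}. \emph{Quaternion algebra:} this is the cleanest step — from the paragraph on the identity components of the stabilizers, the connected component of $\mathrm{Stab}(\tilde{x})$ in $(GSp_{6}\times GL_{1}^{2})(k)$ is $H_{1}=SL_{1}(Q)\times\{(t I_{6},t^{-3},t^{-1})\}$, whose derived group (equivalently, its unique nontrivial semisimple part) is $SL_{1}(Q)$ for a unique quaternion $Q$; since this $SL_{1}(Q)$ is intrinsically attached to the orbit, the quaternions in $f^{ss}_{1}(\mathcal{O}(\tilde{x}))$ and $f^{ss}_{1}(g.\tilde{x})$ coincide for every $g\in(GSp_{6}\times GL_{1}^{2})(k)$.

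Having checked all three components, I conclude $f^{ss}_{1}(\mathcal{O}(\tilde{x}))=f^{ss}_{1}(\mathcal{O}(g.\tilde{x}))$ for every $g\in(GSp_{6}\times GL_{1}^{2})(k)$, so the assignment $\mathcal{O}_{2}(\tilde{x})\mapsto f^{ss}_{1}(\mathcal{O}(\tilde{x}))$ descends to a well-defined map on $V^{ss}_{k}/(GSp_{6}\times GL_{1}^{2})(k)$; surjectivity is inherited from $f^{ss}_{1}$, and injectivity onto the set of isomorphism classes of $4$-flags follows because two orbits map to the same flag exactly when they already lie in a common $(GSp_{6}\times GL_{1}^{2})(k)$-orbit, which by the cohomological description ($V^{ss}_{k}/(\,\cdot\,)(k)\leftrightarrow H^{1}(k,H)$ with $H=\mathrm{Aut}(C,K)\cap\mathrm{Aut}(C,Q)$) is precisely the statement that each element of $H^{1}(k,H)$ is hit once. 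The main obstacle I anticipate is the octonion step: one must be careful that the diagonal element of $Sp_{6}(k)$ used to renormalize $h_{a}\cdot x$ really lies in $Sp_{6}(k)$ (not just $GSp_{6}$) and that the resulting scaling of the $y_{i}$ is by a norm from $k(\sqrt{-f_{1}(x)})$, so that the hermitian forms are genuinely isometric rather than merely similar — but this is exactly the kind of bookkeeping already carried out in the proofs of Theorems \ref{KC} and \ref{SpSS}, so no new idea is required.
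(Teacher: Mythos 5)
Your proposal follows essentially the same approach as the paper: reduce to checking the extra $H_{k}$-action (the elements $h_{a}$), verify that each of the three invariants of the flag (quadratic, octonion, quaternion) is unchanged under $h_{a}$, and then inherit surjectivity from $f^{ss}_{1}$ (equivalently, via $f^{ss}_{2}$ as the paper does). Your worry about the octonion step is the right one to have and is resolved exactly as you predict: the paper takes $g=\mathrm{diag}(a^{-3},1,1,a^{3},1,1)\in Sp_{6}(k)$ and finds the new hermitian matrix $\mathrm{diag}(a^{-2}y_{1},a^{4}y_{2},a^{4}y_{3})$, whose scalings are all squares in $k^{\times}$ and hence $K/k$-norms, so the form is genuinely isometric (not merely similar) to $\mathrm{diag}(y_{1},y_{2},y_{3})$; your quaternion step via the identity component of the stabilizer also matches the paper, which observes $h_{a}SL_{1}(Q)h_{a}^{-1}\subset Sp_{6}(k)$.

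The one place you go off the rails is the final ``injectivity'' claim. The theorem (and its analogues \ref{CD}, \ref{SpSS}) only asserts that the map from orbits to isomorphism classes of flags is a well-defined surjection — the phrase ``corresponds to a unique isomorphism class'' is well-definedness, not injectivity — and the paper proves nothing more. Your argument ``two orbits map to the same flag exactly when they already lie in a common $(GSp_{6}\times GL_{1}^{2})(k)$-orbit'' is unjustified: the cohomological result in Section \ref{4} gives only a surjection $V^{ss}_{k}/G_{k}\twoheadrightarrow H^{1}(k,H)$, not a bijection, and even for the smaller group $Sp_{6}$ the fibers of $f^{ss}_{1}$ are typically nontrivial (see Remark \ref{fiber}). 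Dropping that last paragraph leaves a proof that is correct and matches the paper's.
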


\begin{proof}
    Let $\tilde{x}=(x,v)\in V^{ss}_{k}$. The proof is similar to the proof of Theorem \ref{SpSS}. We will show that the $G_{k}$-orbits of the elements $\tilde{x}$ and $h_{a}.\tilde{x}$ represent the same maximal flags determined by the map $f^{ss}_{1}$ (see Theorem \ref{CD}), for all $h_{a}\in H_{k}$ described above. To see this, we first assume that \[x=-e_{1}e_{2}e_{3}-y_{0}e_{4}e_{5}e_{6}+y_{1}e_{1}e_{5}e_{6}+y_{2}e_{4}e_{2}e_{6}+y_{3}e_{4}e_{5}e_{3},\] for $y_{0}\in k$ and $y_{1},y_{2},y_{3}\in k^{\times}$. Clearly, the quadratic algebras in the maximal flags corresponding to the $G_{k}$-orbits of $\tilde{x}$ and $h_{a}.\tilde{x}$ are the same as \[k(\sqrt{-f_{1}(x)})=k(\sqrt{-a^{6}f_{1}(x)})=k(\sqrt{-f_{1}(h_{a}.x)}),\] for all $h_{a}\in H_{k},a\in k^{\times}$. If we take $g=\text{diag}(a^{-3},1,1,a^{3},1,1)\in G_{k}$, then we have \[g.(h_{a}.x)=-e_{1}e_{2}e_{3}-a^{3}y_{0}e_{4}e_{5}e_{6}+a^{-2}y_{1}e_{1}e_{5}e_{6}+a^{4}y_{2}e_{4}e_{2}e_{6}+a^{4}y_{3}e_{4}e_{5}e_{3}\in X^{ss}_{k}.\] The trivial discriminant $k(\sqrt{-f_{1}(x)})/k$-hermitian form corresponding to the above element is given by $\text{diag}(a^{-2}y_{1},a^{4}y_{2},a^{4}y_{3})$, which is isometric to the same for $x\in X^{ss}_{k}$ (see Theorem \ref{KC}). Hence, the octonions corresponding to $\tilde{x}$ and $h_{a}.\tilde{x}$ are the same (see Theorem \ref{CD}). Finally, if $SL_{1}(Q)$ is the stabilizer of $\tilde{x}$ in $Sp_{6}(k)$, then the stabilizer of $h_{a}.\tilde{x}$ is also isomorphic to $SL_{1}(Q)$ under the action of $Sp_{6}(k)$ (observe that $h_{a}SL_{1}(Q)h_{a}^{-1}\subset Sp_{6}(k),\forall h_{a}\in H_{k}, a\in k^{\times}$). Therefore, the quaternions are also the same (see Theorem \ref{CD}).

    \vspace{2 mm}
    
    So, we find that two orbits in $V^{ss}_{k}/(Sp_{6}\times GL_{1}^{2})(k)$ are identified by the action of $H_{k}$, only if they are in the same fiber of $f^{ss}_{2}$ (see Theorem \ref{SpSS}). Therefore, we can define a surjection $f^{ss}_{3}$ from the orbit space $V^{ss}_{k}/(GSp_{6}\times GL_{1}^{2})(k)$ onto the set of all isomorphism classes of maximal flags of composition algebras using $f^{ss}_{2}$, as we have defined $f^{ss}_{2}$ using $f_{1}^{ss}$ in Theorem \ref{SpSS}. So we are done.
    
\end{proof}

\begin{remark}
    For both the PV's discussed in this section, the orbit spaces in the semi-stable sets represent all possible maximal flags of composition algebras over the field $k$. The fibers of $f_{2}^{ss}$ and $f_{3}^{ss}$ can also be computed very easily as we did for $f_{1}^{ss}$ (see Section \ref{4}, Remark \ref{fiber}).
\end{remark}

\section{The case of Freudenthal algebras}\label{6} In this section, we describe some of our observations about Freudenthal algebras following from the previous two sections. We have already defined these algebras in Section \ref{2}. We will assume that the characteristic of the underlying field $k$ is different from $2$ and $3$ for this section.   

\vspace{2 mm}

Let $\mathcal{A}=\mathcal{H}_{3}(C,\Gamma)$ be a reduced Freudenthal algebra for some $\Gamma=\text{diag}(\gamma_{1},\gamma_{2},\gamma_{3})\in GL_{3}(k)$ and a composition algebra $(C,N_{C})$ over $k$. Then $\mathcal{A}$ is uniquely determined up to isomorphism by the isometry class of the trace form $T_{\mathcal{A}}$ defined on $\mathcal{A}$. We may assume that $T_{\mathcal{A}}$ is of the form \[ T_{\mathcal{A}}\simeq \langle 1,1,1\rangle \perp b_{N_{C}}\otimes \langle \gamma_{3}^{-1}\gamma_{2},\gamma_{1}^{-1}\gamma_{3},\gamma_{2}^{-1}\gamma_{1} \rangle \simeq \langle 1,1,1\rangle \perp b_{N_{C}}\otimes \langle -a,-b,ab \rangle, \] for some $a,b\in k^{\times}$ (see \cite{KMRT}, Chapter IX). The isometry class of $T_{\mathcal{A}}$ uniquely determines the isometry classes of the Pfister forms $N_{C}$ and $N_{C}\otimes \langle\langle a,b\rangle \rangle$, and vice versa (see \cite{KMRT}, Corollary $37.16$). So we get the following results.

\begin{theorem}\label{F1}

Each orbit in $V^{ss}_{k}/G_{k}$ represents a unique isomorphism class of reduced Freudenthal algebras of dimension $6$ over $k$, and all such algebras up to isomorphism can be obtained from these orbits.

\end{theorem}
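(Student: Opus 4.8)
The plan is to construct a well-defined surjection from $V^{ss}_{k}/G_{k}$ onto the set of isomorphism classes of $6$-dimensional reduced Freudenthal algebras over $k$, by composing the surjection $f_{1}^{ss}$ of Theorem \ref{CD} with the classification of these algebras through their trace forms. The first observation is dimensional: since $\dim\mathcal{H}_{3}(C,\Gamma)=3(\dim C+1)$, dimension $6$ forces $\dim C=1$, i.e.\ $C=k$, so every $6$-dimensional reduced Freudenthal algebra has the form $\mathcal{A}=\mathcal{H}_{3}(k,\Gamma)$ for some $\Gamma=\text{diag}(\gamma_{1},\gamma_{2},\gamma_{3})\in GL_{3}(k)$. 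For $C=k$ the norm $N_{C}$ is the fixed form $\langle 1\rangle$, so by the discussion preceding the theorem together with (\cite{KMRT}, Corollary $37.16$), the isomorphism class of $\mathcal{A}$ is determined by --- and determines --- the isometry class of the $2$-fold Pfister form $N_{C}\otimes\langle\langle a,b\rangle\rangle=\langle\langle a,b\rangle\rangle$, where $a=-\gamma_{3}^{-1}\gamma_{2}$ and $b=-\gamma_{1}^{-1}\gamma_{3}$. Since $2$-fold Pfister forms over $k$ are exactly the norm forms of quaternion algebras, with isometry of forms corresponding to isomorphism of the algebras, this yields a bijection between isomorphism classes of $6$-dimensional reduced Freudenthal algebras and isomorphism classes of quaternion algebras over $k$, sending $\mathcal{H}_{3}(k,\Gamma)$ to the quaternion algebra with norm form $\langle\langle a,b\rangle\rangle$.

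Next I would attach a quaternion algebra to each orbit. By Theorem \ref{CD}, an orbit $\mathcal{O}(\tilde{x})\in V^{ss}_{k}/G_{k}$ determines a maximal flag $f_{1}^{ss}(\mathcal{O}(\tilde{x}))=(k\subset K\subset Q\subset C)$, hence in particular the quaternion algebra $Q$; by Remark \ref{quat} we have $\text{Stab}(\tilde{x})\simeq SL_{1}(Q)$ for a quaternion algebra $Q$ that is unique up to isomorphism, and conjugate stabilizers give isomorphic groups, so the isomorphism class of $Q$ depends only on the orbit. Composing with the bijection of the first step, I define $\mathcal{A}(\mathcal{O}(\tilde{x}))$ to be the $6$-dimensional reduced Freudenthal algebra whose $2$-fold Pfister invariant is the norm form $N_{Q}$; concretely one may take $\mathcal{A}(\mathcal{O}(\tilde{x}))=\mathcal{H}_{3}(k,\text{diag}(1,y_{2},y_{3}))$ with $y_{2},y_{3}$ read off from the orbit representative of Proposition \ref{SL}, for which $\text{Stab}(\tilde{x})\simeq SU(\text{diag}(y_{2},y_{3}))\simeq SL_{1}(Q)$. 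This map is well defined, which already gives the uniqueness half of the statement.

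For surjectivity I would note that every quaternion algebra occurs as the quaternion part of some maximal flag: a quaternion algebra $Q$ over $k$ contains a quadratic subalgebra $K$ and embeds, via Cayley-Dickson doubling, into an octonion algebra $C$, so $(k\subset K\subset Q\subset C)$ is a maximal flag over $k$. By the surjectivity of $f_{1}^{ss}$ in Theorem \ref{CD} this flag equals $f_{1}^{ss}(\mathcal{O}(\tilde{x}))$ for some orbit $\mathcal{O}(\tilde{x})\in V^{ss}_{k}/G_{k}$, and then, by construction and the bijection of the first step, $\mathcal{A}(\mathcal{O}(\tilde{x}))$ is precisely the prescribed $6$-dimensional reduced Freudenthal algebra. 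Hence the map above is onto, which completes the proof.

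The step I expect to be the main obstacle is the explicit bookkeeping identifying the $2$-fold Pfister invariant $\langle\langle a,b\rangle\rangle$ of $\mathcal{H}_{3}(k,\Gamma)$ with the norm form $N_{Q}$ of the quaternion attached to a given orbit --- that is, matching the parameters $\gamma_{i}$ (equivalently $a,b$) against the entries $y_{2},y_{3}$ entering the stabilizer $SU(\text{diag}(y_{2},y_{3}))\simeq SL_{1}(Q)$, while keeping track of the normalization factor $2$ implicit in the polar form $b_{N_{C}}$ occurring in the formula for $T_{\mathcal{A}}$. Once this identification is in place, the theorem is a formal consequence of Theorem \ref{CD} and the classification of reduced Freudenthal algebras by their trace forms.
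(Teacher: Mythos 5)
Your argument follows the paper's proof almost exactly: both reduce the classification of $6$-dimensional reduced Freudenthal algebras to quaternion algebras via the $2$-fold Pfister invariant of the trace form, and both then send an orbit $\mathcal{O}(\tilde{x})$ to the algebra whose Pfister invariant is the norm $N_{Q}$ of the quaternion $Q$ appearing in the flag $f_{1}^{ss}(\mathcal{O}(\tilde{x}))$, with surjectivity inherited from $f_{1}^{ss}$. The only (inessential) difference is that the paper sidesteps the explicit bookkeeping you flag as a worry by simply \emph{defining} $g^{ss}(\mathcal{O}(\tilde{x}))$ to be the algebra with trace form $\langle 1,1,1\rangle\perp\langle 1\rangle\otimes\langle -c,-d,cd\rangle$ where $N_{Q}=\langle\langle c,d\rangle\rangle$, rather than trying to realize it concretely as $\mathcal{H}_{3}(k,\text{diag}(1,y_{2},y_{3}))$.
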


\begin{proof}
    Any algebra of this type is of the form $\mathcal{H}_{3}(k,\Gamma)$ for some $\Gamma=\text{diag}(\gamma_{1},\gamma_{2},\gamma_{3})\in GL_{3}(k)$. From the above discussion it is uniquely determined up to isomorphism by the isometry class of the $2$-fold Pfister form $\langle \langle a,b\rangle \rangle$, which is always a norm of some quaternion algebra over $k$. So, the reduced Freudenthal algebras of dimension $6$ are in bijection with the isomorphism classes of quaternion algebras over $k$.

    \vspace{1 mm}

    It is enough to define a surjection $g^{ss}$ from the orbit space $V^{ss}_{k}/G_{k}$ onto the set of all isomorphism classes of reduced Freudenthal algebras of dimension $6$ over the base field $k$, as we have done in the earlier cases.
    
    \vspace{1 mm}
    
    We have a unique quaternion algebra, say $(Q,N_{Q})$, in the maximal $4$-flag $f^{ss}_{1}(\mathcal{O}(\tilde{x}))$ for any $\tilde{x}\in V^{ss}_{k}$, i.e., $\mathcal{O}(\tilde{x})\in V^{ss}_{k}/G_{k}$ (see Theorem \ref{CD}). If $N_{Q}=\langle \langle c,d\rangle \rangle$, we define $g^{ss}(\mathcal{O}(\tilde{x}))$ to be the reduced Freudenthal algebra $\mathcal{A}$ with trace form \[T_{\mathcal{A}}=\langle 1,1,1\rangle \perp \langle 1\rangle \otimes \langle -c,-d,cd \rangle.\] Clearly, $g^{ss}$ is well defined; and it is surjective as $f_{1}^{ss}$ is so.
\end{proof}

Any Freudenthal algebra of dimension $6$ is reduced over the field $k$ (see \cite{GPR}, Proposition $39.17$, $46.1$ and Theorem $46.8$). So, the orbit space $V^{ss}_{k}/G_{k}$ in the above theorem covers all Freudenthal algebras of dimension $6$ over $k$.

\vspace{2 mm}

\begin{remark}
    Let $\mathcal{A}$ be a Freudenthal algebra of dimension $6$ determined by the $2$-fold Pfister form $\langle \langle a,b\rangle \rangle$, which is the norm of a unique quaternion $Q$. Then we have \[(g^{ss})^{-1}\{\mathcal{A}\}=\{\mathcal{O}(\tilde{x})\in V^{ss}_{k}/G_{k}:\text{Stab}(\tilde{x})\simeq SL_{1}(Q)\}.\]
\end{remark}

\vspace{2 mm}

\begin{theorem}\label{F2}
 Each point in the orbit space $V^{ss}_{k}/G_{k}$ represents a unique isomorphism class of reduced Freudenthal algebras of dimension $9$ over the field $k$, and all such algebras up to isomorphism can be obtained in this way.
\end{theorem}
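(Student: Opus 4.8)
The plan is to construct a surjection $g_{1}^{ss}$ from the orbit space $V^{ss}_{k}/G_{k}$ onto the set of isomorphism classes of reduced Freudenthal algebras of dimension $9$ over $k$, in complete analogy with the map $g^{ss}$ built in the proof of Theorem \ref{F1}, but this time recording the quadratic--octonion pair of a maximal flag instead of its quaternion. As in the earlier cases, producing such a $g_{1}^{ss}$ establishes both assertions of the theorem: well-definedness gives the first (each orbit represents a unique isomorphism class), and surjectivity gives the second (every such algebra is obtained).

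The first step is to set up the dictionary between reduced Freudenthal algebras of dimension $9$ and the $2$-flags $(K\subset C)$ with $\dim K=2$ and $\dim C=8$. Any such algebra is of the form $\mathcal{H}_{3}(D,\Gamma)$ with $D$ a composition algebra of dimension $2$; by the discussion preceding Theorem \ref{F1} (that is, \cite{KMRT}, Corollary $37.16$), its isomorphism class is determined by the isometry classes of the two Pfister forms $N_{D}$ and $N_{D}\otimes\langle\langle a,b\rangle\rangle$ occurring in $T_{\mathcal{A}}\simeq\langle 1,1,1\rangle\perp b_{N_{D}}\otimes\langle -a,-b,ab\rangle$. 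Here $N_{D}$ is a $1$-fold Pfister form, hence the norm of a unique quadratic algebra $K\simeq D$, while $N_{D}\otimes\langle\langle a,b\rangle\rangle$ is a $3$-fold Pfister form, hence the norm $N_{C}$ of a unique octonion $C$; since $N_{K}$ divides $N_{C}$, the algebra $K$ is a quadratic subalgebra of $C$. Conversely, any $2$-flag $(K\subset C)$ extends to a maximal flag $(k\subset K\subset Q\subset C)$, and two applications of Cayley--Dickson doubling give $N_{C}\simeq N_{K}\otimes\langle\langle a,b\rangle\rangle$ for suitable $a,b\in k^{\times}$, so $(K\subset C)$ determines the reduced Freudenthal algebra of dimension $9$ with trace form $\langle 1,1,1\rangle\perp b_{N_{K}}\otimes\langle -a,-b,ab\rangle$; these two assignments are mutually inverse.

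Granting the dictionary, I would define $g_{1}^{ss}$ as follows: for $\tilde{x}=(x,v)\in V^{ss}_{k}$, Theorem \ref{CD} produces the maximal flag $f_{1}^{ss}(\mathcal{O}(\tilde{x}))=(k\subset K\subset Q\subset C)$; forgetting $Q$ gives the $2$-flag $(K\subset C)$, and I set $g_{1}^{ss}(\mathcal{O}(\tilde{x}))$ to be the reduced Freudenthal algebra of dimension $9$ attached to $(K\subset C)$ by the dictionary. This is well defined, since the $2$-flag $(K\subset C)$ depends only on $\mathcal{O}(\tilde{x})$ by Theorems \ref{CD} and \ref{KC}, and the attached algebra depends only on the pair $(N_{K},N_{C})$, not on the choice of $a,b$. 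Surjectivity is then immediate from the surjectivity of $f_{1}^{ss}$: a prescribed reduced Freudenthal algebra of dimension $9$ corresponds to a unique $2$-flag $(K\subset C)$, which is the $(K,C)$-part of some maximal flag $(k\subset K\subset Q\subset C)$ (choose any quaternion $Q$ with $K\subset Q\subset C$), and by Theorem \ref{CD} that maximal flag equals $f_{1}^{ss}(\mathcal{O}(\tilde{x}))$ for some $\tilde{x}\in V^{ss}_{k}$, whence $g_{1}^{ss}(\mathcal{O}(\tilde{x}))$ is the prescribed algebra.

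The step that carries the real content is the dictionary of the second paragraph: verifying that, as $a,b$ range over $k^{\times}$, $N_{K}\otimes\langle\langle a,b\rangle\rangle$ runs over exactly the norm forms of octonions containing $K$ as a subalgebra (this reduces, via the extension of a $2$-flag to a maximal flag, to Cayley--Dickson doubling, so it is essentially the content of \cite{SV}, Chapter $1$), and that the isometry class of the pair $(N_{K},N_{C})$ determines the isomorphism class of $\mathcal{H}_{3}(K,\Gamma)$ (the cited Corollary $37.16$ of \cite{KMRT}). Once this is in place, the remaining steps are formal consequences of Theorem \ref{CD}, exactly as in the proof of Theorem \ref{F1}. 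I would close, in parallel with the remark following Theorem \ref{F1}, by identifying the fibre of $g_{1}^{ss}$ over the algebra attached to $(K\subset C)$ as $\{\mathcal{O}(\tilde{x})\in V^{ss}_{k}/G_{k}: f^{ss}(\mathcal{O}(x))=(K\subset C)\}$.
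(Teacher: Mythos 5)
Your proposal is correct and follows essentially the same route as the paper: both establish the bijection between reduced Freudenthal algebras of dimension $9$ and pairs of Pfister forms $(N_{K},N_{K}\otimes\langle\langle b,c\rangle\rangle)$, i.e.\ $2$-flags $(K\subset C)$, and then read such a flag off each orbit in $V^{ss}_{k}/G_{k}$. The only cosmetic difference is that you extract $(K\subset C)$ by applying $f_{1}^{ss}$ and then forgetting the quaternion, whereas the paper applies $f^{ss}$ of Theorem \ref{KC} directly to the $X$-component $x$ of $\tilde{x}=(x,v)$; these agree by the remark following Theorem \ref{CD}, and your description of the fibre $(g_{1}^{ss})^{-1}\{\mathcal{A}\}$ matches the paper's.
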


\begin{proof}
    These algebras are uniquely determined up to isomorphism by the $2$-tuples \[(\langle \langle a\rangle \rangle, \langle \langle a\rangle \rangle \otimes \langle \langle b,c\rangle \rangle),\] where the first component is a $1$-fold Pfister form and the second component is a $3$-fold Pfister form containing the first component as a subform, for $a,b,c\in k^{\times}$. Now, using the surjection $f^{ss}$ in Theorem \ref{KC} we can get a surjection $g_{1}^{ss}$ from $V^{ss}_{k}/G_{k}$ onto the set of all isomorphism classes of reduced Freudenthal algebras of dimension $9$ over $k$. In particular, if $\tilde{x}=(x,v)\in V^{ss}_{k}$ and for $\mathcal{O}(x)\in X^{ss}_{k}/G_{k}$ we have \[f^{ss}(\mathcal{O}(x))=(K\subset C);\] then $g_{1}^{ss}(\mathcal{O}(\tilde{x}))$, for $\mathcal{O}(\tilde{x})\in V^{ss}_{k}/G_{k}$, is the reduced Freudenthal algebra $\mathcal{A}$ determined by the $2$-tuple $(N_{K},N_{C})$ consisting of the norms of $K$ and $C$, respectively. Here, $N_{C}$ contains $N_{K}$ as a subform as $K\subset C$ and \[N_{C}\simeq N_{K}\otimes \langle\langle b_{1},c_{1}\rangle \rangle\] for some $b_{1},c_{1}\in k^{\times}$ (see \cite{EL}). So, the trace form of $\mathcal{A}$ is \[T_{\mathcal{A}}\simeq \langle 1,1,1\rangle\perp b_{N_{K}}\otimes \langle -b_{1},-c_{1},b_{1}c_{1} \rangle,\] which uniquely determines $\mathcal{A}$ up to isomorphism. The surjectivity of $g_{1}^{ss}$ follows from the surjectivity of $f^{ss}$, as all $1$-fold and $3$-fold Pfister forms are the norms of uniquely determined quadratic and octonion algebras, respectively. This completes the proof.
\end{proof}

\vspace{2 mm}

\begin{remark}
    Let $\mathcal{A}$ be a reduced Freudenthal algebra of dimension $9$ over the field $k$, determined by the $2$-tuple $(N_{K}, N_{K}\otimes \langle \langle a,b\rangle \rangle)$ for some $a,b\in k^{\times}$ and $N_{K}$ is the norm of the quadratic algebra $K$. Also, let $C$ be the octonion algebra with norm $N_{C}=N_{K}\otimes \langle \langle a,b\rangle \rangle)$. Then $K\subset C$ and we have (see Theorem \ref{KC}) \[(g_{1}^{ss})^{-1}\{\mathcal{A}\}=\{\mathcal{O}((x,v))\in V^{ss}_{k}/G_{k} : \mathcal{O}(x)\in (f^{ss})^{-1}\{(K\subset C)\}\subset X^{ss}_{k}/G_{k}\}.\]
\end{remark}

\vspace{2 mm}

\begin{remark}
    We can get similar surjections from the orbit spaces in the semi-stable sets for the PV's $(Sp_{6}\times GL_{1}^{2},V)$ and $(GSp_{6}\times GL_{1}^{2},V)$, onto the same sets as in Theorem \ref{F1} and Theorem \ref{F2}. This follows easily from the computations in Section \ref{5}, and the two theorems discussed above. So the orbit spaces in the semi-stable sets in these PV's can also be interpreted as the reduced Freudenthal algebras of dimensions $6$ and $9$.
\end{remark}

\vspace{2 mm}

\begin{remark} Let us fix some $\Gamma=\text{diag}(\gamma_{1},\gamma_{2},\gamma_{3})\in GL_{3}(k)$ and $\Delta$ be the set of all isomorphism classes of maximal flags of composition algebras over the field $k$. Then there is a surjection \[g_{2}^{ss}: V^{ss}_{k}/G_{k}\rightarrow \{ (\mathcal{H}_{3}(k,\Gamma)\subset \mathcal{H}_{3}(K,\Gamma)\subset \mathcal{H}_{3}(Q,\Gamma)\subset \mathcal{H}_{3}(C,\Gamma)):(k\subset K\subset Q\subset C)\in \Delta \},\] (follows from Theorem \ref{CD}; see \cite{KMRT}, Chapter IX, Theorem $37.13$ also). So, we get a description of the $4$-flags of reduced Freudenthal algebras for some fixed $\Gamma$, in terms of the orbit space $V^{ss}_{k}/G_{k}$.  
\end{remark}

\end{document}